\declaretheorem{theorem}
\declaretheorem{corollary}
\declaretheorem{lemma}
\declaretheorem{proposition}
\declaretheorem{observation}
\declaretheoremstyle[qed=$\square$]{definitionwithend}
\declaretheorem[style=definitionwithend]{definition}
\declaretheorem[style=definitionwithend]{assumption}
\declaretheorem[style=definitionwithend]{example}
\declaretheorem[style=definitionwithend]{remark}
\crefname{fact}{Fact}{Facts}
\def\cbl{\color{blue}}
\definecolor{gold}{rgb}{0.85,0.65,0}
\newcommand{\norm}[1]{\ensuremath{\left\lVert #1 \right\rVert}}
\let\emptyset\varnothing
\def\R{{\mathbb{R}}}
\DeclareMathOperator*{\argmin}{arg\,min}
\DeclareMathOperator*{\argmax}{arg\,max}
\DeclareMathOperator*{\E}{\mathbb{E}}
\def\Se{{\mathbb{S}}}
\def\CX{{\cal X}}
\def\ra{\rangle}
\def\la{\langle}
\newcommand{\dgf}{d.g.f.}
\DeclareMathOperator{\Prox}{Prox}
\def\true{1}
\def\flagJournal{0} %Whether to generate the add \epr at the end of the definiion and remark environments
\begin{document}

\title{Online Convex Optimization Perspective for Learning from Dynamically Revealed Preferences}
\author[1]{Violet (Xinying) Chen}
\author[1]{Fatma K{\i}l{\i}n\c{c}-Karzan}
\affil[1]{Tepper School of Business, Carnegie Mellon University, Pittsburgh, PA 15232}
\date{3 June 2021}%{\today}

\maketitle
% Abstract. Note that this must come before \maketitle.
\begin{abstract}
We study the problem of online learning (OL) from revealed preferences: a learner wishes to learn a non-strategic agent's private utility function through observing the agent's utility-maximizing actions in a changing environment. We adopt an online inverse optimization setup, where the learner observes a stream of agent's actions in an online fashion and the learning performance is measured by regret associated with a loss function. We first characterize a special but broad class of agent's utility functions, then utilize this structure in designing a new convex loss function. We establish that the regret with respect to our new loss function also bounds the regret with respect to all other usual loss functions in the literature. 
This allows us to design a flexible OL framework that enables a unified treatment of loss functions and supports a variety of online convex optimization algorithms. We demonstrate with theoretical and empirical evidence that our framework based on the new loss function (in particular online Mirror Descent) has significant advantages in terms of regret performance and solution time over other OL algorithms from the literature and bypasses the previous technical assumptions as well. 
\end{abstract}

\section{Introduction} \label{sec:intro}
Preferences of an agent implicitly dictates his/her actions, and influence for example what a company should offer as its products or how a company should personalize recommendations to an individual customer (agent). This creates incentives for the company/central decision maker to learn the preferences of their agents. 
Nevertheless, in reality, the true preferences of the agents are often private to the individual agents and are only implicitly revealed in the form of their behaviors/actions to the central decision maker. Such typical interactions for example include a streaming platform suggesting a number of videos to a user and tracking whether the user watches or likes them. 
As evident from such scenarios, inferring the agents' preference information through agent interactions and observations of their behaviors is a critical task for the decision makers in such settings.

A common assumption adopted to formalize the problem of learning from revealed preferences is that rational agents are \emph{utility} maximizers, that is, they choose actions to maximize their utility functions subject to a set of restrictions. The central decision maker interacting with the agents is the \emph{learner}. An important learner-centric goal is to design schemes for the learner to extract useful information on the agents' utility functions. This learning point of view of revealed preferences has been explored in a broad range of literature from economics (e.g., \cite{varian2006revealed, BeigmanV2006}), machine learning (e.g., \cite{balcan2014learning,dong2018strategic}) and operations research (e.g., \cite{BarmannMPS2018, Kuhn2018}). Based on the type of learner-agent interactions and information feedback, such preference learning schemes vary in information requirement, preference elicitation objective and learning complexity.

In this paper, we focus on a specific setup where the learner seeks to learn the utility function of a non-strategic agent while receiving information about the agent's actions in an online fashion.  This setup fits naturally in applications where the agents benefit from effective learning of their true preferences. For example this is the case when a streaming platform interacts with its users to learn their preferences.
In this example, in a typical interaction, the platform recommends videos to a user and the user takes actions based on the recommendations. User actions, e.g., clicks, movie streaming, etc., are fully observable to the platform and they reflect the user's true preferences. 

\subsection{Related Literature}  \label{sec:literature}
\cite{varian2006revealed} is one of the earliest and most celebrated work for learning from revealed preferences in the economics literature. They study constructing utility functions of the agent to explain a sequence of her/his observed actions. Nevertheless, this approach has a main shortcoming---a utility function capable of explaining past actions not necessarily also guarantees accurate predictions of the future actions. Consequently, \cite{BeigmanV2006} have initiated a new line of research to learn utility functions capable of predicting future actions with statistical performance guarantees. \cite{BeigmanV2006} examine a statistical setup where the learning algorithm takes as input a batch of observations and is evaluated by its sample complexity guarantees. 
\cite{zadimoghaddam2012} focus on the setting where the agent has a linear or linearly separable concave utility function, and propose learning algorithms with polynomially bounded sample complexity. \cite{balcan2014learning} identify a connection between the problem of learning a utility function and the structured prediction problem of D-dimensional linear classes. Through this connection, \cite{balcan2014learning} suggest an algorithm for learning utility functions that is superior (in terms of sample complexity) than the method from \cite{zadimoghaddam2012} in the case of linear utility functions and is also applicable for learning separable piecewise-linear concave functions and CES functions with explicit sample complexity bounds. 

As an alternative to this statistical view, \cite{balcan2014learning} study a query-based learning model, where the learner aims to recover the exact utility function by querying an oracle for the agent's optimal actions. The query-based models consider an online feedback mechanism where the learner receives one observation of the agent's action at a time. When the learner has the power to choose which observation to receive from the query oracle, \cite{balcan2014learning} give exact learning algorithms for several classes of utility functions. There is a recent research stream on \emph{learning to optimize} the learner's objective function based on information from revealed preferences of the agents. In this stream it is often assumed that the learner has similar power on the selection of the observations.  For example, \cite{amin2015online} and \cite{ji2018social} propose algorithms for finding the profit-maximizing prices for a seller, who has price controlling power and learns buyer preferences by observing the buying behavior at different price levels.
\cite{roth2016watch} and \cite{dong2018strategic} consider the learning task as Stackelberg games, where the leader player is the learner and a follower player is a \emph{strategic} agent with incentive to manipulate actions and hide information. More specifically, \cite{roth2016watch} study the price setting problem with unknown follower preferences in a Stackelberg game. \cite{dong2018strategic} consider preference learning in strategic classification, where the leader player releases classifiers to strategic follower players. When the leader's classifier selection problem is a convex program, \cite{dong2018strategic} provide an online zeroth-order optimization  algorithm for minimizing the leader's Stackelberg regret.

We note two restrictions with the problem setup in these fore-mentioned papers. First, the assumption that the learner can choose observations is not always achievable in practice. A more realistic setup is accomodated by the data-driven inverse optimization view where the learner does not control the sequence of observations.
Second, when the learner is optimizing an objective function that does not explicitly measure how well s/he is learning about the agent, the approaches that are effective for choosing the learner's objective-optimizing action provide no guarantees on the quality of the learned agent information. 

Inverse optimization generalizes the query-based view and offers a natural abstraction of learning from revealed preferences. This approach is typically used in settings with non-strategic agents, in which the agents have no incentive to hide information from the learner, and thus an agent's decisions reveal her/his true preferences.
In this setting, the learner's goal is to recover unknown parameters of an agent's utility function from the observations of her/his true optimal solution.
Early studies on inverse optimization examine the setting where the agent's optimization problem is fixed, see e.g., \cite{ahuja2001inverse,Iyengar2005,heuberger2004inverse,schaefer2009inverse}. 
Unfortunately, this classical setup is limited in its practical applicability as it ignores uncertainty in the environment.
A new thread of research on data-driven inverse optimization studies a flexible setup, where the learner observes the agent's optimal or sub-optimal decisions corresponding to varying external data signals.
In the noiseless case, that is, when observations of optimal solutions/agent actions are available, \cite{KeshavarzWB2011} show that data-driven inverse optimization of convex programs is polynomial time solvable. In the case of noisy observations, \cite{AswaniSS2018} proves that such problems are NP-hard in general.

Data-driven inverse optimization is further categorized based on whether observations are given as a batch upfront or in an online manner. 
In the batch setup, \cite{KeshavarzWB2011} study the inverse optimization of identifying the unknown affine weights in a convex objective function that is an affine combination of pre-selected basis convex functions. 
Recent work of \cite{AswaniSS2018} and \cite{Kuhn2018} in the batch setup investigates the inverse optimization of general convex programs without the basis function structure. \cite{AswaniSS2018} adopt the  \emph{prediction loss} $\ell^{pre}$, which measures the difference between the observed agent action and the predicted agent action through squared norm distance, as the inverse optimization objective. They formulate the inverse problem into a bilevel program using Lagrangian duality, and present two heuristic algorithms with approximation guarantees for solving the bilevel formulation. \cite{Kuhn2018} use \emph{suboptimality loss} $\ell^{sub}$, which is defined as the difference between objective values at the observed agent action and the predicted action, as their loss function and provide a distributionally robust formulation of the inverse problem. Batch setup requires that the learner receives observations of the agent's actions all at once. However, obtaining a large batch of observations all at once as well as learning from such a batch often presents operational and computational challenges. In practice, such strong batch feedback is rare as the learner often interacts with the agent repetitively in a  dynamic environment. 

A recent stream of research \cite{BarmannMPS2018,DongCZ2018} adopts a dynamic information acquisition setup and studies the online data-driven inverse optimization where the learner observes a stream of the agent's actions one by one in an online fashion. Both \cite{BarmannMPS2018} and \cite{DongCZ2018} suggest OL algorithms and measure their performance via the \emph{regret}, i.e., the difference between the losses incurred from online estimates and the offline optimal estimate. \cite{BarmannMPS2018} consider the problem of learning the linear utility function of an agent given the noiseless online observations of the agent's actions in a dynamic environment. They propose two specialized OL algorithms with first-order oracles that both achieve a bound of $O(\sqrt{T})$ on the sum of the suboptimality loss $\ell^{sub}$ and the estimate loss $\ell^{est}$ after $T$ periods but lacks regret guarantees with respect to the prediction loss $\ell^{pre}$. 
\cite{DongCZ2018} consider the setup, where the learner wishes to learn an unknown linear component of an agent's quadratic objective function from noisy observations. By utilizing the implicit OL framework of \cite{Kulis2010} equipped with a Mixed Integer Second Order Cone Program (MISOCP)-based solution oracle, they provide a regret bound of $O(\sqrt{T})$ with respect to the prediction loss $\ell^{pre}$ after $T$ periods whenever $\ell^{pre}$ is convex. \cite{DongCZ2018} present a number of rather technical assumptions that guarantee the convexity of $\ell^{pre}$, however these assumptions are not only difficult to verify but also quite restrictive. In fact, in \cite{DongCZ2018}, these were shown to hold only for a very specific case of a specific convex quadratic problem class.

\subsection{Contributions and Outline} \label{sec:outline}
We follow an online data-driven inverse optimization perspective for learning from dynamically revealed preferences. Our contributions along with an outline of the paper is as follows.
\begin{itemize}
\item We present a formal description of our problem setting in Section \ref{sec:setting}. In our setup, the learner monitors a sequence of data signals and observes the respective rational decisions of a non-strategic agent without noise over a finite time horizon of $T$ time steps. 
The learner operates and receives information in an online fashion, and updates an estimate $\theta_t$ of $\theta_{true}$ using newly available information at each time step.  
Section~\ref{sec:forwardprob} introduces the agent's problem and discusses 
a rather broad decomposable structure assumption on the agent's utility functions that is capable of representing all of the utility functions studied in the data-driven inverse optimization literature as well as  other key utility functions. Section~\ref{sec:inverse_problem} describes the learner's inverse optimization problem that minimizes a given \emph{loss function} $\ell(\cdot)$ to obtain an accurate estimate $\theta$ of the hidden parameter $\theta_{true}$.  
In Section~\ref{sec:inverseprob}, we utilize our structural assumption on the agent's utility function to design a new convex loss function, namely \emph{simple loss} $\ell^{sim}$. 
We establish in Section~\ref{sec:pi} that in the noiseless setting, a bounded regret with respect to $\ell^{sim}$ also guarantees a bounded regret with respect to all the other loss functions; see Proposition~\ref{prop:pi} and Corollary~\ref{cor:strconv}. 

\item Convexity and simplicity of $\ell^{sim}$ enables us to use an online convex optimization (OCO) framework (see  Section~\ref{sec:OL}) 
that offers the flexibility to use different OL algorithms, such as, online Mirror Descent (MD) utilizing a first-order oracle (Section~\ref{sec:FOL}) and implicit OL based on a solution oracle (Section~\ref{sec:implicitOL}). 
In the noiseless setup, our framework equipped with online MD covers \emph{all} of the problem classes studied in the online data-driven inverse optimization literature, and matches the corresponding state-of-the-art regret bounds with respect to \emph{all} of the loss functions in a \emph{unified} manner. In particular, our results immediately generalize the customized algorithms from \cite{BarmannMPS2018} and completely bypass the requirement to verify the rather technical assumptions of \cite{DongCZ2018} and the need to use their expensive MISOCP-based solution oracle; see Section~\ref{sec:litcomp} for a detailed comparison discussion. 

\item Our numerical study in Section~\ref{sec:app} highlights that when compared to the $\ell^{pre}$-based implicit OL with an MISOCP solution oracle approach of \cite{DongCZ2018}, $\ell^{sim}$-based OL algorithms equipped with a first-order oracle or a solution oracle, particularly online MD, demonstrate significant advantages in terms of both the learning performance (i.e., regret bounds) and the computation time. This is directly in line with our theoretical results. Moreover, these results seem to be fairly robust with respect to changes in the structure of the agent's domain as well as the noise in observations. 
\end{itemize}

All proofs are given in Appendix~\ref{sec:proof}, and derivation details on the solution oracles are presented in Appendix~\ref{sec:app-sol}.

\textbf{Notation.} %\subsection{Notation}
We let $\R_+^n$ be the set of nonnegative $n$-dimensional vectors. For a given vector ${v}$, we use $v_i$ to denote its $i$-th element. 
We let $[n] \coloneqq \{1, \ldots, n\}$, and we use $\{a_i\}_{i \in [n]}$ to represent a collection of entries, such as vectors, functions, etc., indexed with $i \in [n]$. 
For a differentiable function $f$, we use $\nabla f({x})$ to denote the gradient of $f$ at ${x}$. For a nondifferentiable function $f$, we use $\partial f({x})$ to denote the subdifferential of $f$ at ${x}$. 

\section{Problem Setting} \label{sec:setting}
In our setting, the learner monitors a sequence of external signals $\{{u}_t\}_{t \in [T]}\subseteq\R^k$ and observations $\{{y}_t\}_{t \in [T]}\subseteq\R^n$ of the agent's respective optimal decisions over a finite time horizon of $T$ time steps. 

\subsection{Forward Problem} \label{sec:forwardprob}
For a fixed exogenous signal ${u}$, the agent's optimal decision ${x}({\theta}_{true}; {u})$ is given by the \emph{forward problem}:
\vspace{-5pt}
\begin{equation} \label{sys:forward}
{x}({\theta}_{true}; {u})\in\argmin_{{x}} \left\{f({x};{\theta}_{true},{u}):~ g({x};{u}) \leq 0,~ {x} \in \CX\right\},
\vspace{-5pt}
\end{equation}
where $\CX\subseteq\R^n$ is the static domain of the agent's problem and ${\theta}_{true}\in\R^p$ is a parameter known only by the agent, $f$ represents the negative of the agent’s utility function capturing her/his preferences, and $g$ is a (set of) constraint(s) defining agent’s feasible actions in $\CX$. 
We study a special class of objective functions $f$ in the forward problem \eqref{sys:forward}. 
\begin{assumption} \label{assp:fstr}
The function $f$ has a decomposable structure of the form 
$f({x}; \theta, {u}) = f_1({x};{u}) + f_2(\theta;{u}) + \la \theta, c({x}) \ra$, where $c({x}) = \left(c_1({x}), \ldots, c_p%
({x}) \right)$. 
\end{assumption}

While Assumption~\ref{assp:fstr} may appear to be restrictive, it still allows us to capture all problem classes studied in the literature as well as two other important classes of utility functions that have not been addressed in the online preference learning setting: CES (constant elasticity of substitute) function, and Cobb-Douglas function, which is a limit case of the CES function. See Appendix~\ref{app:utilfunc} for the transformations of CES and Cobb-Douglas functions to satisfy Assumption \ref{assp:fstr}. 
Unfortunately, the other well-studied limit case known as the Leontief function does not fit into the same framework. 
In Section \ref{sec:inverseprob}, we will show how Assumption~\ref{assp:fstr} is advantageous in establishing desirable convexity of the suboptimality loss function used in this literature, and designing a new convex loss function. 

\begin{remark}
Assumption~\ref{assp:fstr} allows for the possibility of the function $c$ to obscure information. In most examples of interest in the online inverse optimization or preference learning, the function $c$ will provide direct information on $x$, such as $c_i(x_i)=x_i$, or $c_i(x_i)=x_i^2$, etc. That said, one can purposefully select this function $c$ to obscure information on $x$, e.g., $c_i(x_i)=0$ for almost all values of $x_i$. In such cases, our framework as well as any other meaningful approach may fail to provide interesting guarantees (i.e., sublinear regret bounds) in an online setup. We further discuss this in Appendix~\ref{app:cstruc}.
\end{remark}

\subsection{Inverse Problem}\label{sec:inverse_problem}
Given a signal $u$, under \emph{perfect information} the learner observes the agent's optimal solution without noise, i.e., ${y} = {x}(\theta_{true};{u})$; under \emph{imperfect information}, ${y} = {x}(\theta_{true};{u}) + \epsilon$, where $\epsilon \in \R^n$ is the noise that the learner suffers from when observing agent's action. Consistent with the literature, we assume that the learner has access to an \emph{agent response oracle}, with which the learner can generate the \emph{predicted action} ${x}(\theta;{u})$ at a given signal $u$ and estimate $\theta$ by solving the following model obtained from \eqref{sys:forward} where  $\theta_{true}$ is replaced with $\theta$  
\begin{align} \label{sys:forward-theta}
{x}(\theta;{u})\in\argmin_{{x}} \left\{f({x};{\theta},{u}):~ g({x};{u}) \leq 0,~ {x} \in \CX\right\}.
\end{align}
Based on external signals, past observations of the agent's respective optimal decisions, 
and the knowledge of a convex set $\Theta$ containing $\theta_{true}$, the learner wishes to predict $\theta$ values that will mimic closely the agent's preference-driven action $x(\theta_{true};u)$. The performance of learner's estimates are measured via \emph{loss function} $\ell$: given signal ${u}$, the learner incurs  $\ell(\theta,{x}(\theta;{u});{y},{u})$ as the loss for the estimate $\theta$, where ${y}$ is the learner's observation of ${x}(\theta_{true}; {u})$ and ${x}(\theta;{u})$ is the learner's prediction of the agent's optimal decision as defined in \eqref{sys:forward-theta}. Specifically, $\ell: \R^p \times \R^n \mapsto \R$ takes $(\theta,{x}(\theta;{u})) \in \R^p \times \R^n$ as independent variables and $({y},{u})$ as given parameters. We refer to the learner's loss minimization problem as the \textit{inverse problem}. More formally, given the revealed parameters ${u}$ and ${y}$, this \emph{inverse problem} is a bilevel program of the form
\begin{align}\label{sys:inverse}
    \min_{\theta,{x}(\theta;{u})} \left\{ \ell(\theta,{x}(\theta;{u}); {y},{u}):~ {x}(\theta;{u}) \in \text{argmin}_{{x} \in \mathcal{X}} \{f({x};\theta,{u}):
    g({x};{u}) \leq 0 \}, ~ \theta \in \Theta  \right\}.
\end{align}

In the online inverse optimization over a finite time horizon $T$, at each time step $t \in [T]$, the learner generates an estimate $ \theta_{t} \in \Theta$ for the true parameter $\theta_{true}$ using the current signal $u_t$, the past information $\{(y_{t'},u_{t'})\}_{t'\in[t-1]}$, and the feedback collected on the loss functions $\ell_{t'}(\theta) \coloneqq \ell(\theta, {x}(\theta;{u}_{t'}); {y}_{t'}, {u}_{t'})$, $t' \in [t-1]$, i.e., from the previous $t-1$ time steps, and then the learner observes $y_t$. Typical OL algorithms rely on the feedback on the current loss function $\ell_t(\theta)$ such as the first-order information, i.e., the gradient $\nabla \ell_t(\theta_t)$ or a subgradient $\partial \ell_t(\theta_t)$ (indeed we will operate with this type of feedback too). In the case of online inverse optimization, once the observation $y_t$ is revealed, we must demonstrate that such feedback for $\ell_t(\theta)$ is possible based on the information available to the learner at the current iteration $t$, i.e., $\{(y_{t'},u_{t'},\theta_{t'})\}_{t'\in[t]}$, and we will illustrate that for our choice of loss function $\ell_t(\theta)$ this is indeed possible.
The goal of the online decision maker is to minimize the cumulative loss $\sum_{t \in [T]} \ell_t(\theta_t)$. The performance of an OL algorithm is measured via \emph{regret}, that is, the difference between the cumulative loss incurred from the online decisions $\{\theta_t\}_{t \in [T]}$ and the best fixed decision in hindsight:
\begin{equation} \label{defeq:regret}
R_T\left(\{\ell_t\}_{t \in [T]}, \{\theta_t\}_{t \in [T]}\right) \coloneqq \textstyle \sum_{t \in [T]} \ell_t(\theta_t) - \min_{\theta \in \Theta} \sum_{t \in [T]} \ell_t(\theta).
\end{equation}

\begin{remark}
In standard OL, specially OCO where $\{\ell_t(\theta)\}_{t\in[T]}$ are convex for all $t$, loss functions follow an adversarial view, i.e., an adversary who tries to hide as much information as possible from the learner generates them and then they are revealed to the learner. In contrast to this standard adversarial view in OCO, in our inverse learning application, we are concurrently designing our loss functions $\{\ell_t(\theta)\}_{t\in[T]}$ to which we will apply deterministic OCO algorithms as well. This may bring the question of whether the usual guarantees of OCO regret minimizing algorithms will remain valid in our setup or not. 
To this end, we highlight that the regret guarantees provided by the standard OCO algorithms hold for \emph{arbitrary} families of convex loss functions $\{\ell_t(\theta)\}_{t\in[T]}$. This flexibility in handling loss functions, often adversarial ones, makes OCO a useful and versatile tool in many applications (see \cite{hazan2019}).  
In fact, the application of OCO to \emph{non-arbitrary} functions as a tool has been employed previously in the context of designing OCO-based frameworks for solving robust convex optimization in \cite{BenTalHazan2015,Ho-NguyenKK16RO,ho2019exploiting}. 
For our particular application, in Section \ref{sec:inverseprob}, we design linear functions $\{\ell^{sim}_t(\theta)\}_{t\in[T]}$ to which we apply OCO algorithms. 
While using non-arbitrary classes of loss functions does not invalidate any guarantees from deterministic OCO methods such as online MD, more caution is needed for OCO algorithms which involve randomness and provide guarantees on \emph{expected regret} such as stochastic gradient descent. This is due to the fact that the design of the loss functions in specific applications may create undesirable dependence among random variables and invalidate certain steps used in the analysis of stochastic OCO algorithms. Hence, here we will focus on deterministic OCO algorithms. 
\end{remark}

\subsection{Loss Functions}
\label{sec:inverseprob}
Loss function $\ell(\theta)$ plays a key role in the formulation of the inverse problem \eqref{sys:inverse}. The following are common loss functions used in inverse optimization context (recall that $f$ is the agent's forward objective in \eqref{sys:forward} and $x(\theta; u_t)$ is the optimal solution to \eqref{sys:forward-theta} for given $\theta$, $u_t$): 
\begin{itemize}
    \item Prediction loss: $\ell^{pre}(\theta, {x}(\theta;{u}_t); {y}_t, {u}_t) \coloneqq \norm{{y}_t - {x}(\theta;{u}_t)}^2$,
    \item Suboptimality loss: $\ell^{sub}(\theta, {x}(\theta;{u}_t); {y}_t, {u}_t) \coloneqq f({y}_t; \theta, {u}_t) - f({x}(\theta;{u}_t); \theta, {u}_t)$, and
    \item Estimate loss: $\ell^{est}(\theta, {x}(\theta;{u}_t); {y}_t, {u}_t) \coloneqq  f({x}(\theta;{u}_t); \theta_{true}, {u}_t) - f({y}_t; \theta_{true}, {u}_t)$.
\end{itemize}
Within data-driven inverse optimization in a batch setup, $\ell^{pre}$ is used in \cite{AswaniSS2018} and $\ell^{sub}$ is used in \cite{Kuhn2018} (see Section \ref{sec:literature}). In the online inverse optimization setup, $\ell^{sub}$ and $\ell^{est}$ are studied by \cite{BarmannMPS2018} under the assumption that $f$ is linear in $x$, and $\ell^{pre}$ by \cite{DongCZ2018} when the forward problem is a quadratic program with a special structure. The OL algorithms from these latter two papers are customized for the chosen loss functions and forward problem structure, indicating the lack of a unified general framework. 

We introduce the following shorthand notation.
\[
\ell^{pre}_t(\theta) \coloneqq \ell^{pre}(\theta, {x}(\theta;{u}_t); {y}_t, {u}_t), ~\ell^{sub}_t(\theta) \coloneqq \ell^{sub}(\theta, {x}(\theta;{u}_t); {y}_t, {u}_t),~
\ell^{est}_t(\theta) \coloneqq \ell^{est}(\theta, {x}(\theta;{u}_t); {y}_t, {u}_t).
\vspace{-5pt}
\]

We first establish that under Assumption \ref{assp:fstr}, $\ell^{sub}(\theta)$ becomes a convex function of $\theta$. To the contrary, $\ell^{pre}_t(\theta)$ and $\ell^{est}_t(\theta)$ are not guaranteed to be convex; see Appendix \ref{app: convexity}. 
\begin{lemma}\label{lem:l-sub-characterization}
Under Assumption~\ref{assp:fstr}, $\ell^{sub}_t(\theta)$ is convex in $\theta$ for every $u_t,y_t$ and $t \in [T]$.
\end{lemma}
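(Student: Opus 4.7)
The plan is to substitute Assumption~\ref{assp:fstr} directly into the definition of $\ell^{sub}_t(\theta) = f(y_t;\theta,u_t) - f(x(\theta;u_t);\theta,u_t)$ and track which pieces depend on $\theta$. Writing out both terms using $f(x;\theta,u) = f_1(x;u) + f_2(\theta;u) + \langle \theta, c(x)\rangle$, the $f_2(\theta;u_t)$ summand appears in each and cancels. What remains is
\[
\ell^{sub}_t(\theta) = f_1(y_t;u_t) + \langle \theta, c(y_t)\rangle \;-\; \bigl[f_1(x(\theta;u_t);u_t) + \langle \theta, c(x(\theta;u_t))\rangle\bigr].
\]
The first two summands are affine in $\theta$ since $y_t$ and $u_t$ are fixed, so the question reduces to showing that the bracketed quantity is concave in $\theta$.

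For that bracketed quantity, I would use the key observation that the $f_2(\theta;u_t)$ component of the agent's objective is independent of $x$ and hence pulls out of the argmin defining $x(\theta;u_t)$ in \eqref{sys:forward-theta}. Consequently,
\[
f_1(x(\theta;u_t);u_t) + \langle \theta, c(x(\theta;u_t))\rangle \;=\; \min_{x\in\mathcal{X},\, g(x;u_t)\leq 0} \bigl[f_1(x;u_t) + \langle \theta, c(x)\rangle\bigr].
\]
For each fixed feasible $x$, the integrand $f_1(x;u_t) + \langle \theta, c(x)\rangle$ is affine (indeed linear plus a constant) in $\theta$, so the right-hand side is a pointwise infimum of a family of affine functions of $\theta$ and is therefore concave in $\theta$.

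Putting these pieces together, $\ell^{sub}_t(\theta)$ is an affine function of $\theta$ minus a concave function of $\theta$, hence convex in $\theta$, which is exactly the claim. There is no real obstacle in this argument; the only thing to be careful about is the clean cancellation/factoring of the $f_2(\theta;u_t)$ term, which is precisely what the decomposable structure in Assumption~\ref{assp:fstr} is engineered to guarantee, and to note that the argument uses \emph{no} convexity or regularity hypotheses on $f_1$, $f_2$, $c$, $g$, or $\mathcal{X}$ beyond those implicit in \eqref{sys:forward} being well-defined.
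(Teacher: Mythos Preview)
Your proof is correct and is essentially the same argument as the paper's: both substitute Assumption~\ref{assp:fstr}, cancel the $f_2(\theta;u_t)$ term, and then recognize the remaining expression as a pointwise extremum of affine functions of $\theta$. The only cosmetic difference is that the paper pushes the linear term $\langle\theta,c(y_t)\rangle$ inside the optimization to write $\ell^{sub}_t(\theta)$ directly as $f_1(y_t;u_t)+\max_x\{\langle\theta,c(y_t)-c(x)\rangle-f_1(x;u_t)\}$, a pointwise maximum of affine functions, whereas you keep it outside and phrase the conclusion as ``affine minus concave.''
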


We next utilize the structure of $f$ in Assumption \ref{assp:fstr} to design a new loss function. 
\begin{definition} \label{def:lsim}
Suppose Assumption \ref{assp:fstr} holds. We define the \emph{simple loss} as 
\[\ell^{sim}(\theta, {x}(\theta_t;{u}_t); {y}_t, {u}_t) \coloneqq
    \la \theta, c({y}_t) - c({x}(\theta_t;{u}_t)) \ra +  \la \theta_{true}, c({x}(\theta_t;{u}_t)) - c({y}_t)\ra.
    \qedhere
\]
\end{definition}

Let $\ell^{sim}_t(\theta) \coloneqq \ell^{sim}(\theta, {x}(\theta_t;{u}_t), {y}_t, {u}_t).$ In $\ell^{sim}_t(\theta)$, the term ${x}(\theta_t; {u}_t)$ is the optimal solution to \eqref{sys:forward-theta} with given $\theta_t$ and $u_t$, and it can be viewed as a prediction of the agent's action at the current estimate $\theta_t$ of the true parameter $\theta_{true}$. Hence, when $\ell^{sim}$ is used as the loss function in online inverse optimization, at each time step $t$, $\ell^{sim}_t(\theta)$ has an explicit dependence on the revealed signal $u_t$, the observation $y_t$ of agent's true optimal action, and the predicted action $x(\theta_t; u_t)$ using the estimate $\theta_t$ generated based on the information from previous $t-1$ time steps and the signal $u_t$. Here, it is noteworthy to highlight that since $\theta_t$ is determined before the function $\ell^{sim}_t(\theta)$ is revealed, there is no cyclic dependence between them. 

Next, we note that $\ell^{sim}_t(\theta)$ is a convex function of $\theta$, which is important for its use in our online inverse optimization framework. We will demonstrate in Section \ref{sec:pi} that under Assumption \ref{assp:fstr}, regret minimization based on $\ell^{sim}(\theta)$ also leads to performance guarantees with respect to the all other loss functions. 

\begin{lemma} \label{lem:l-sim_convex}
Under Assumption~\ref{assp:fstr}, $\ell^{sim}_t(\theta)$ is linear (hence convex) in $\theta$ for every $t \in [T]$.
\end{lemma}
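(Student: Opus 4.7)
The plan is to simply read off the $\theta$-dependence from the definition. Fix $t \in [T]$ and inspect
\[
\ell^{sim}_t(\theta) = \langle \theta,\, c(y_t) - c(x(\theta_t; u_t)) \rangle + \langle \theta_{true},\, c(x(\theta_t; u_t)) - c(y_t) \rangle.
\]
The crucial observation, already emphasized in the paragraph preceding the lemma, is that $\theta_t$ is the estimate generated from information available strictly before $\ell^{sim}_t$ is formed, so $\theta_t$ does not depend on the free variable $\theta$. Consequently $x(\theta_t; u_t)$, which is obtained by solving the forward problem \eqref{sys:forward-theta} with parameter $\theta_t$ and signal $u_t$, is likewise independent of $\theta$. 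Together with the given data $y_t, u_t$ and the (unknown but fixed) $\theta_{true}$, this makes both vectors $c(y_t)$ and $c(x(\theta_t; u_t))$ constants in $\theta$.

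I would then define the constants $a_t \coloneqq c(y_t) - c(x(\theta_t; u_t)) \in \R^p$ and $b_t \coloneqq \langle \theta_{true},\, c(x(\theta_t; u_t)) - c(y_t)\rangle \in \R$, and rewrite
\[
\ell^{sim}_t(\theta) = \langle a_t, \theta \rangle + b_t,
\]
which is an affine (in particular linear up to a constant) function of $\theta$, hence convex. Assumption~\ref{assp:fstr} enters only implicitly, through the fact that it guarantees a well-defined $c(\cdot)$ so that Definition~\ref{def:lsim} makes sense; no additional manipulation of $f_1, f_2$ is needed.

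There is essentially no obstacle here; the only subtle point is the one highlighted in the paragraph above the lemma, namely to make absolutely clear that $\theta_t$ (and therefore $x(\theta_t; u_t)$) is treated as a constant and not as the variable of differentiation. If one mistakenly interpreted $\ell^{sim}_t$ as depending on $\theta$ through the inner-level solution $x(\theta; u_t)$, the result would be false in general because $x(\theta; u_t)$ need not be a linear function of $\theta$. I would conclude by noting that since an affine function is both convex and concave, $\ell^{sim}_t$ is trivially a valid choice of convex loss for the OCO framework introduced in Section~\ref{sec:OL}.
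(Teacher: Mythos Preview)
Your proof is correct and follows the same approach as the paper, which simply notes that under Assumption~\ref{assp:fstr} the expression for $\ell^{sim}_t(\theta)$ is linear in $\theta$ and hence convex. Your version is in fact more explicit than the paper's one-sentence proof, carefully isolating the affine coefficients $a_t,b_t$ and stressing that $\theta_t$ (and thus $x(\theta_t;u_t)$) is fixed data rather than the optimization variable.
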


\subsection{Regret Performance Measures for Preference Learning} \label{sec:pi}
We will develop an OCO-based framework for preference learning. To this end,
we have already introduced a loss function, i.e., $\ell^{sim}$, that is convex under Assumption \ref{assp:fstr}. In this section, we show that in the perfect information setup (i.e., when there is no noise on the observations ${y}_t = {x}(\theta_{true},{u}_t)$ for all $t$), the regret with respect to $\ell^{sim}$ indeed bounds the regrets with respect to all other loss functions of interest as well. Although restrictive, perfect information setting is still relevant in practice in settings such as the streaming platform described in Introduction. For completeness, we discuss the \emph{imperfect information} case from both theoretical and empirical aspects (see Appendices~\ref{app:imperinfo}~and~\ref{app:imperexp}), and our preliminary study indicates the potential of further developments in this direction.

Our main result establishes a fundamental guarantee among the regret bounds with respect to $\ell_t^{sim}, \ell_t^{sub}$ and $\ell_t^{est}$. 

\begin{proposition} \label{prop:pi}
Suppose Assumption \ref{assp:fstr} holds and there is no noise on the observations. Then, for any sequence $\{\theta_t\}_{t \in [T]}$, we have  
\begin{align*}
(a)~ & R_T(\{\ell^{sub}_t\}_{t \in [T]}, \{\theta_t\}_{t \in [T]}),~ R_T(\{\ell^{est}_t\}_{t \in [T]}, \{\theta_t\}_{t \in [T]}),\text{ and } R_T(\{\ell^{pre}_t\}_{t \in [T]}, \{\theta_t\}_{t \in [T]}) \geq 0, \\
(b)~ & R_T(\{\ell^{sub}_t\}_{t \in [T]}, \{\theta_t\}_{t \in [T]}) + R_T(\{\ell^{est}_t\}_{t \in [T]}, \{\theta_t\}_{t \in [T]}) = \textstyle \sum_{t=1}^T \ell^{sim}_t(\theta_t), \\
(c)~ & R_T(\{\ell^{sim}_t\}_{t \in [T]}, \{\theta_t\}_{t \in [T]}) \geq R_T(\{\ell^{sub}_t\}_{t \in [T]}, \{\theta_t\}_{t \in [T]}) + R_T(\{\ell^{est}_t\}_{t \in [T]}, \{\theta_t\}_{t \in [T]}).
\end{align*}
As a consequence of (c), 
$R_T(\{\ell^{sim}_t\}_{t \in [T]}, \{\theta_t\}_{t \in [T]})$ upper bounds both $R_T(\{\ell^{sub}_t\}_{t \in [T]}, \{\theta_t\}_{t \in [T]})$ and $R_T(\{\ell^{est}_t\}_{t \in [T]}, \{\theta_t\}_{t \in [T]})$.
\end{proposition}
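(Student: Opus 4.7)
The plan is to exploit two facts throughout: the perfect-information assumption $y_t = x(\theta_{true}; u_t)$, which makes $y_t$ optimal for the forward problem at $\theta_{true}$; and the decomposition $f(x;\theta,u) = f_1(x;u) + f_2(\theta;u) + \langle \theta, c(x)\rangle$ from Assumption~\ref{assp:fstr}, which will make the $f_1$ and $f_2$ terms telescope when $\ell^{sub}_t$ and $\ell^{est}_t$ are added. I will also use that $\theta_{true} \in \Theta$, so $\theta_{true}$ is a feasible choice of the hindsight minimizer.

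For part (a), I would first establish the pointwise nonnegativity of the three per-step losses. Optimality of $x(\theta; u_t)$ for the forward problem at $\theta$ together with feasibility of $y_t$ gives $\ell^{sub}_t(\theta) \geq 0$; symmetrically, optimality of $y_t = x(\theta_{true}; u_t)$ at $\theta_{true}$ together with feasibility of $x(\theta;u_t)$ gives $\ell^{est}_t(\theta) \geq 0$; and $\ell^{pre}_t(\theta) \geq 0$ is immediate from the squared norm. Plugging in $\theta = \theta_{true}$ yields $\ell^{sub}_t(\theta_{true}) = \ell^{est}_t(\theta_{true}) = \ell^{pre}_t(\theta_{true}) = 0$, so for each loss the hindsight minimum equals zero. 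Subtracting zero from the (nonnegative) cumulative loss at $\{\theta_t\}$ proves part (a) and, as a bookkeeping byproduct, gives $R_T(\{\ell^{sub}_t\}) = \sum_t \ell^{sub}_t(\theta_t)$ and $R_T(\{\ell^{est}_t\}) = \sum_t \ell^{est}_t(\theta_t)$, which I will use in parts (b)--(c).

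For part (b), I would substitute the decomposable form of $f$ into $\ell^{sub}_t(\theta_t) + \ell^{est}_t(\theta_t)$. Writing
\[
\ell^{sub}_t(\theta_t) = f_1(y_t;u_t) - f_1(x(\theta_t;u_t);u_t) + \langle \theta_t, c(y_t) - c(x(\theta_t;u_t))\rangle
\]
and analogously
\[
\ell^{est}_t(\theta_t) = f_1(x(\theta_t;u_t);u_t) - f_1(y_t;u_t) + \langle \theta_{true}, c(x(\theta_t;u_t)) - c(y_t)\rangle,
\]
the $f_1$ terms cancel and the $f_2(\theta;u_t)$ terms never appear, leaving exactly $\ell^{sim}_t(\theta_t)$ as in Definition~\ref{def:lsim}. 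Summing over $t$ and invoking the bookkeeping identity from part (a) gives the claimed equality.

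For part (c), it suffices to show $\min_{\theta \in \Theta}\sum_t \ell^{sim}_t(\theta) \leq 0$, since then
\[
R_T(\{\ell^{sim}_t\}) = \sum_t \ell^{sim}_t(\theta_t) - \min_{\theta\in\Theta}\sum_t \ell^{sim}_t(\theta) \geq \sum_t \ell^{sim}_t(\theta_t),
\]
which combined with (b) yields the claim. Evaluating at the feasible point $\theta = \theta_{true}$, the two inner products in $\ell^{sim}_t(\theta_{true})$ are negatives of each other, so $\ell^{sim}_t(\theta_{true}) = 0$ for every $t$ and the required bound on the hindsight minimum follows. No step is a serious obstacle; the one point requiring care is being explicit that the $f_2(\theta_t; u_t)$ contribution in $\ell^{sub}_t$ and the $f_2(\theta_{true}; u_t)$ contribution in $\ell^{est}_t$ each cancel internally (they do not cross between the two losses), so the decomposition in Assumption~\ref{assp:fstr} is used only through its $\langle \theta, c(x)\rangle$ component together with the cancellation of $f_1$.
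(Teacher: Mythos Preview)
Your proposal is correct and follows essentially the same approach as the paper: the paper simply factors out your intermediate steps as separate lemmas (nonnegativity of $\ell^{sub}_t,\ell^{est}_t,\ell^{pre}_t$; the identities $\ell^{sim}_t(\theta_{true})=0$ and $\ell^{sub}_t(\theta_t)+\ell^{est}_t(\theta_t)=\ell^{sim}_t(\theta_t)$; and the observation that the hindsight minimum for $\ell^{sub},\ell^{est},\ell^{pre}$ equals zero and is attained at $\theta_{true}$), whereas you inline them. The logical flow for (a), (b), and (c) is identical, including the key step in (c) of bounding the hindsight minimum of $\sum_t\ell^{sim}_t(\theta)$ by plugging in $\theta=\theta_{true}$.
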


When $f$ is a strongly convex function in ${x}$, \cite[Proposition 2.5]{Kuhn2018} shows that $\ell^{sub}_t(\theta) \geq \frac{\gamma}{2}\ell^{pre}_t(\theta)$ for all $t$ and for all $\theta \in \Theta$, with $\gamma$ being the strong convexity parameter of $f$. Hence, this result enables us to derive a further regret bound for the loss functions $\{\ell_t^{pre}\}_{t\in[T]}$. 
\begin{corollary} \label{cor:strconv}
Suppose Assumption \ref{assp:fstr} holds and there is no noise. Assume further that $f$ is a strongly convex function of ${x}$ for every $\theta$, i.e., there exists $\gamma > 0$ such that $f({x};\theta,{u}) - f({y};\theta,{u}) \geq \langle s_{{y}}, {x}-{y}\rangle + \frac{\gamma}{2} \norm{{x} - {y}}^2$, where $s_{{y}}$ is a subgradient of $f({y};\theta,{u})$ with respect to ${y}$. Then, for any sequence $\{\theta_t\}_{t \in [T]}\subseteq \Theta$ we have $R_T(\{\ell^{sub}_t\}_{t \in [T]},\{\theta_t\}_{t \in [T]}) \geq \frac{\gamma}{2} R_T(\{\ell^{pre}_t\}_{t \in [T]},\{\theta_t\}_{t \in [T]})$.
\end{corollary}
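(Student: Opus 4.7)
\textbf{Proof proposal for Corollary~\ref{cor:strconv}.} The plan is to reduce the corollary to a pointwise inequality between $\ell^{sub}_t$ and $\ell^{pre}_t$ (the one quoted from \cite[Proposition~2.5]{Kuhn2018}) by first arguing that, in the noiseless case, the ``best fixed estimate in hindsight'' terms in the two regret definitions both vanish. This reduces each regret to a sum of losses along the online trajectory, and a termwise application of Kuhn's inequality then closes the argument.

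First I would isolate the nonnegativity and zero-attainment properties of both losses. For any $t$ and any $\theta\in\Theta$, $\ell^{sub}_t(\theta) = f(y_t;\theta,u_t) - f(x(\theta;u_t);\theta,u_t) \geq 0$ because $x(\theta;u_t)$ is the minimizer of $f(\,\cdot\,;\theta,u_t)$ over the agent's feasible set, and $y_t$ (being a noiseless observation of $x(\theta_{true};u_t)$) is feasible. Similarly $\ell^{pre}_t(\theta)=\|y_t - x(\theta;u_t)\|^2\geq 0$. Under the perfect-information assumption $y_t = x(\theta_{true};u_t)$, both losses evaluate to $0$ at $\theta=\theta_{true}$; since $\theta_{true}\in\Theta$, this yields
\[
\min_{\theta\in\Theta}\sum_{t\in[T]}\ell^{sub}_t(\theta) \;=\; 0 \;=\; \min_{\theta\in\Theta}\sum_{t\in[T]}\ell^{pre}_t(\theta).
\]
Consequently, by \eqref{defeq:regret}, $R_T(\{\ell^{sub}_t\}_{t\in[T]},\{\theta_t\}_{t\in[T]})=\sum_{t\in[T]}\ell^{sub}_t(\theta_t)$ and $R_T(\{\ell^{pre}_t\}_{t\in[T]},\{\theta_t\}_{t\in[T]})=\sum_{t\in[T]}\ell^{pre}_t(\theta_t)$.

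Next I would invoke the cited pointwise bound. For each $t$ and the iterate $\theta_t\in\Theta$, strong convexity of $f(\,\cdot\,;\theta_t,u_t)$ combined with the first-order optimality condition at $x(\theta_t;u_t)$ (which gives $\langle s_{x(\theta_t;u_t)},\, y_t - x(\theta_t;u_t)\rangle\geq 0$ for feasible $y_t$) yields
\[
\ell^{sub}_t(\theta_t) \;\geq\; \tfrac{\gamma}{2}\,\|y_t - x(\theta_t;u_t)\|^2 \;=\; \tfrac{\gamma}{2}\,\ell^{pre}_t(\theta_t).
\]
Summing over $t\in[T]$ and using the equalities from the previous paragraph gives
\[
R_T(\{\ell^{sub}_t\}_{t\in[T]},\{\theta_t\}_{t\in[T]}) \;\geq\; \tfrac{\gamma}{2}\,R_T(\{\ell^{pre}_t\}_{t\in[T]},\{\theta_t\}_{t\in[T]}),
\]
which is the desired bound.

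The only delicate point I anticipate is the ``minimum equals zero'' reduction: one must invoke both the nonnegativity of each loss and the fact that $\theta_{true}\in\Theta$ attains zero, which crucially relies on the noiseless assumption $y_t=x(\theta_{true};u_t)$ (otherwise $\ell^{pre}_t(\theta_{true})$ need not vanish and the bound on the infimum could fail). Everything else is a direct summation of Kuhn's pointwise inequality, which itself is a routine consequence of strong convexity plus the variational inequality at the constrained minimizer $x(\theta_t;u_t)$.
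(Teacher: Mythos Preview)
Your proposal is correct and follows essentially the same route as the paper: the paper invokes Lemma~\ref{lem:minloss} (which is exactly your ``minimum equals zero'' reduction in the noiseless case) to identify each regret with the corresponding sum of losses, and then applies the pointwise inequality $\ell^{sub}_t(\theta)\geq \tfrac{\gamma}{2}\ell^{pre}_t(\theta)$ from \cite[Proposition~2.5]{Kuhn2018} termwise. You simply unpack both ingredients explicitly (including the strong-convexity-plus-variational-inequality derivation of the pointwise bound), rather than citing them.
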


Assumption \ref{assp:fstr} ensures that $\ell_t^{sim}$ is a convex function of $\theta$, and thus any deterministic OCO algorithm will be applicable for regret minimization with respect to $\{\ell^{sim}_t\}_{t\in[T]}$. Then, as a consequence of Proposition~\ref{prop:pi} (and Corollary~\ref{cor:strconv}), such algorithms will also be minimizing regret with respect to the loss functions $\ell_t^{sub}, \ell_t^{est}$ (and $\ell_t^{pre}$), as well. 

\begin{remark}
The regret bounds with respect to these loss functions have the following implications under perfect information. A sublinear regret bound with respect to $\ell^{sim}$ implies that the average loss incurred by the estimates $\{\theta_t\}$ approaches the offline optimal loss over time.Sublinear regret bounds with respect to $\ell^{sub}$ and $\ell^{est}$ indicate that the learner is able to generate estimates $\{\theta_t\}$ that lead to vanishing errors in the predicted agent's objective function values. A sublinear regret bound with respect to $\ell^{pre}$ additionally indicates that the average $\norm{\cdot}_2$-norm distance between the predicted agent's action and her/his true action decreases to zero over time.
Note that none of these regret guarantees in particular
ensures that the $\{\theta_t\}$ generated from the online learning process are good approximations of $\theta_{true}$. In general, this is an overly ambitious task as \cite[Example 3.2]{BarmannMPS2018} has shown a simple case where the exact recovery of $\theta_{true}$ cannot be guaranteed. We note that stronger performance guarantees, such as $\frac{1}{T}\sum_{t \in [T]} \norm{\theta_t - \theta_{true}} \rightarrow 0$ may be possible for special cases, for example, when $x(\theta;u_t)$ has a closed form expression as a continuous function in $\theta$. In addition, in certain cases the optimal actions from the forward problem may be non-unique, our framework is not aiming to predict the chosen action $x(\theta_{true}; u_t)$, instead, we measure the learning performance with regret values based on the objective value of the agent.
\ifx\flagJournal\true \epr \fi 
\end{remark}

The case when the learner has access to only imperfect information about the agent's actions is of natural interest as well. \cite{Kuhn2018} identify two types of noisy information as of interest: (i) \emph{measurement noise}, that is, for all $t\in[T]$, the learner observes $y_t = {x}(\theta_{true}, {u}_t) + \epsilon_t$ with $\epsilon_t$ denoting a random noise, and (ii) \emph{bounded rationality}, which means for all $t\in[T]$, the agent may choose a sub-optimal action instead of ${x}(\theta_{true}, {u}_t)$. 
Such imperfect information does not affect the convexity property of loss functions, and so both $\ell^{sub}$ and $\ell^{sim}$ remain convex (see Lemma~\ref{lem:l-sub-characterization}~and~\ref{lem:l-sim_convex}) still enabling the use of OCO algorithms for regret minimization with respect to these loss functions. However, since ${y}_t$ is no longer guaranteed to be a minimizer of \eqref{sys:forward} with $u = u_t$, Proposition \ref{prop:pi} does not hold in general, and consequently $R_T(\{\ell^{sim}_t\}_{t \in [T]}, \{\theta_t\}_{t \in [T]})$ is not guaranteed to bound the regrets with respect to the other loss functions. In addition, due to the noises in ${y}_t$, $R_T(\{\ell^{sub}_t\}_{t \in [T]}, \{\theta_t\}_{t \in [T]})$ can no longer accurately measure learning performance with respect to the agent's true objective values either.
See Appendix~\ref{app:imperinfo} for details on the consequences of imperfect information on regret.

\section{Online Learning Algorithms}\label{sec:OL}
Under Assumption \ref{assp:fstr}, both $\ell^{sim}(\theta)$ and $\ell^{sub}(\theta)$ are convex in $\theta$, so online inverse optimization with respect to either loss function can be done in an OCO framework. To take advantage of the unifying capability from $R_T(\{\ell^{sim}_t\}_{t \in [T]}, \{\theta_t\}_{t \in [T]})$, here we will focus on $\ell^{sim}$ as our loss function, leaving the investigation of $\ell^{sub}$-based methods for future work.

We equip our framework with two well-known deterministic OL regret minimization algorithms utilizing different oracles: Online \emph{Mirror Descent} (MD), which is a classical OCO algorithm that utilizes a first-order oracle, and the implicit OL algorithm introduced in \cite{Kulis2010} that is based on a solution oracle. Under Assumption~\ref{assp:fstr} and in the case of perfect information, both algorithms generate regret bounds with respect to $\ell^{sim}$, which then imply regret bounds with respect to the other loss functions as well (see Section~\ref{sec:inverseprob}).

For exposition convenience, we state all of these algorithms in the same online setup: the learner receives observations $\{{y}_t,{u}_t\}_{t \in [T]}$ and generates $\{\theta_t\}_{t \in [T]}\subseteq \Theta$ to minimize the regret $R_T(\{\ell_t\}_{t \in [T]}, \{\theta_t\}_{t \in [T]})$. 

\subsection{Online Convex Optimization with First-Order Oracle} \label{sec:FOL}
We review the well-known first-order OCO algorithm, namely the online \emph{Mirror Descent} (MD) algorithm in the proximal setup. We follow the presentation and notation of \cite{MD2011} and define the following standard components of the proximal setup:
\begin{itemize}
\item \emph{Norm}: $\|\cdot\|$ on the Euclidean space $\E$ where the domain $\Theta$ lives, along with its dual norm $\|\zeta\|_*:=\max\limits_{\|\theta\|\leq1}\langle\zeta,\theta\rangle$.
\item \emph{Distance-Generating Function} ({\dgf}): A function $\omega(
\theta):\Theta \rightarrow \R$, which is convex and continuous on $\Theta$, and admits a selection of subdifferential $\partial\omega(\theta)$ that is continuous on the set $\Theta^\circ:=\{\theta\in \Theta:\partial\omega(\theta)\neq\emptyset\}$, %
and is strongly convex with modulus 1 with respect to 
$\|\cdot\|$:
   \[ 
    \forall \theta', \theta''\in \Theta^\circ:~ \langle \partial\omega( \theta')-\partial\omega(\theta''),\  \theta'- {\theta}''\rangle \geq\| \theta'- {\theta}''\|^2. 
    \] 
\item \emph{Bregman distance}: $V_{\theta}(\theta'):=\omega( \theta')-\omega( \theta)-\langle \partial\omega( \theta), \theta'- \theta\rangle$ for all $ \theta\in \Theta^\circ$ and $ \theta'\in \Theta$. 

Note $V_ \theta( \theta') \geq \frac{1}{2} \| \theta- \theta'\|^2 \geq 0$ for all $ \theta\in\Theta^\circ$ and $ \theta'\in\Theta$ follows from the strong convexity of $\omega$. 

\item  \emph{Prox-mapping}: Given a \emph{prox center} $ \theta\in \Theta^\circ$,
  \[
  \Prox_\theta(\xi):=\argmin\limits_{\theta' \in \Theta}\left\{\langle \xi, \theta'\rangle + V_ \theta( \theta')\right\}: \E\to \Theta^\circ.
  \]

When the {\dgf} is taken as the squared $\ell_2$-norm, the prox mapping becomes the usual projection operation of the vector $
\theta-\xi$ onto $\Theta$.
\item \emph{$\omega$-center}: $\theta_\omega:=\argmin\limits_{\theta\in \Theta}\omega(\theta)$.
\item \emph{Set width}: $\Omega:=\max\limits_{\theta\in \Theta}V_{\theta_\omega}(\theta)\leq\max\limits_{\theta\in \Theta}\omega(\theta)-\min\limits_{\theta\in \Theta}\omega(\theta)$.
\end{itemize}
When functions $\{\ell_t^{sim}(\theta)\}_{t \in [T]}$ are convex in $\theta$, online MD as stated in \cite[Algorithm 1]{ho2019exploiting} is applicable to guarantee a sublinear regret bound on $R_T(\{\ell^{sim}_t\}_{t \in [T]}, \{\theta_t\}_{t \in [T]})$, which further bounds regrets with respect to the other loss functions as discussed in Section~\ref{sec:pi}. 

\begin{theorem} \label{thm:pi}
\cite[Theorem 1]{ho2019exploiting}
Suppose $\Theta$ is convex and $\ell_t: \Theta \mapsto \R$ is a convex function for $t \in [T]$. Suppose there exists $G \in (0,\infty)$ such that all the subgradients $s_t$ of $\ell_t$ are bounded, i.e., $\max_{s_t\in\partial \ell_t(\theta)}\norm{s_t}_* \leq G$ for all $\theta \in \Theta$ and $t \in [T]$. Let the step size $\eta_t$ be chosen as $\eta_t = \frac{2\Omega}{G^2 T}$. At time step $t$, using the online Mirror Descent algorithm, we generate $\theta_{t+1}$ as
\begin{equation} \label{eq:thetaupdate}
\theta_{t+1} \coloneqq \Prox_{\theta_t}(\eta_t s_t) = \argmin_{\theta \in \Theta} \left\{ \left\la \eta_t s_t, \theta \right\ra + V_{\theta_t}(\theta) \right\}, 
\end{equation}
where $s_t\in\partial \ell_t(\theta_t)$. 
Then the sequence $\{\theta_t\}_{t \in [T]}$ satisfies
$R_T(\{\ell_t\}_{t \in [T]}, \{\theta_t\}_{t \in [T]}) \leq \sqrt{2\Omega G^2 T}$.
\end{theorem}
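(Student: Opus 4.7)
My plan is to run the standard mirror-descent regret analysis in the proximal setup. The starting observation is convexity: for any fixed comparator $\theta^\ast \in \argmin_{\theta \in \Theta} \sum_{t\in[T]} \ell_t(\theta)$ and any subgradient $s_t \in \partial \ell_t(\theta_t)$, we have $\ell_t(\theta_t) - \ell_t(\theta^\ast) \leq \langle s_t, \theta_t - \theta^\ast\rangle$. Summing this over $t \in [T]$ reduces bounding the regret to bounding $\sum_{t\in[T]} \langle s_t, \theta_t - \theta^\ast\rangle$, i.e., a purely mirror-descent quantity with no further reference to the specific $\ell_t$.

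Next, I would derive the one-step prox-mapping inequality from the first-order optimality conditions of the $\Prox$ minimization in \eqref{eq:thetaupdate}. Since $\theta_{t+1} = \argmin_{\theta \in \Theta}\{\langle \eta_t s_t, \theta\rangle + V_{\theta_t}(\theta)\}$, optimality gives $\langle \eta_t s_t + \partial\omega(\theta_{t+1}) - \partial\omega(\theta_t),\, \theta^\ast - \theta_{t+1}\rangle \geq 0$. Rearranging and using the three-point identity
\[
\langle \partial\omega(\theta_t) - \partial\omega(\theta_{t+1}),\, \theta^\ast - \theta_{t+1}\rangle = V_{\theta_t}(\theta^\ast) - V_{\theta_{t+1}}(\theta^\ast) - V_{\theta_t}(\theta_{t+1})
\]
yields the workhorse inequality
\[
\eta_t \langle s_t, \theta_t - \theta^\ast\rangle \leq V_{\theta_t}(\theta^\ast) - V_{\theta_{t+1}}(\theta^\ast) + \eta_t\langle s_t, \theta_t - \theta_{t+1}\rangle - V_{\theta_t}(\theta_{t+1}).
\]
I would then control the last two terms using strong convexity of $\omega$, which gives $V_{\theta_t}(\theta_{t+1}) \geq \tfrac12\|\theta_t - \theta_{t+1}\|^2$, together with the Fenchel--Young bound $\eta_t\langle s_t, \theta_t - \theta_{t+1}\rangle \leq \tfrac{\eta_t^2}{2}\|s_t\|_\ast^2 + \tfrac12\|\theta_t - \theta_{t+1}\|^2$. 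This collapses the residual terms to $\tfrac{\eta_t^2}{2}\|s_t\|_\ast^2$.

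Summing the resulting inequality over $t \in [T]$ telescopes the Bregman-distance terms, so
\[
\sum_{t\in[T]} \eta_t \langle s_t, \theta_t - \theta^\ast\rangle \leq V_{\theta_1}(\theta^\ast) + \tfrac12 \sum_{t\in[T]} \eta_t^2 \|s_t\|_\ast^2.
\]
Choosing $\theta_1 = \theta_\omega$ bounds $V_{\theta_1}(\theta^\ast) \leq \Omega$, and applying the subgradient bound $\|s_t\|_\ast \leq G$ gives, with the constant step size $\eta_t \equiv \eta = \tfrac{2\Omega}{G^2 T}$ as stated (so that $\eta_t$ is constant and can be pulled out), a regret bound of the form $\tfrac{\Omega}{\eta} + \tfrac{\eta G^2 T}{2}$. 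Optimizing in $\eta$ yields $\sqrt{2\Omega G^2 T}$, matching the claim.

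The only delicate step is the three-point identity combined with the correct Fenchel--Young splitting; everything else is bookkeeping. I also want to be careful that $\theta_t \in \Theta^\circ$ throughout so that the subgradient $\partial\omega(\theta_t)$ used in $V_{\theta_t}(\cdot)$ is well-defined; this is guaranteed by the definition of $\Prox$, which maps into $\Theta^\circ$. Since the result is cited as \cite[Theorem~1]{ho2019exploiting}, the proof reduces to invoking this standard analysis.
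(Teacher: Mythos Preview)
The paper does not give its own proof of this theorem; it is simply quoted from \cite[Theorem~1]{ho2019exploiting} and invoked as a black box. Your proposal supplies exactly the standard mirror-descent regret analysis that underlies that citation, and the chain of steps (convexity linearization, prox optimality condition, three-point identity, Fenchel--Young bound, telescoping) is correct. One small point: the bound $\tfrac{\Omega}{\eta}+\tfrac{\eta G^2 T}{2}$ you derive is minimized at $\eta=\sqrt{2\Omega/(G^2 T)}$, yielding $\sqrt{2\Omega G^2 T}$ as claimed; the stated constant $\eta_t=\tfrac{2\Omega}{G^2 T}$ in the theorem appears to be a typo for this optimal choice, so your ``optimizing in $\eta$'' remark is the right way to close the argument.
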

In applying Theorem~\ref{thm:pi} to the loss functions $\{\ell^{sim}_t\}_{t \in [T]}$, we have the subgradient $s_t = c(y_t) - c(x(\theta_t;u_t))$. So, it suffices to set $G \geq 2 \max \{\norm{c(x)}_*: x \in \mathcal{X}\}$. The set width $\Omega$ depends on $\Theta$ only and can be computed for a given $\Theta$ and Bregman distance explicitly.

\subsection{Implicit Online Learning with a Solution Oracle}\label{sec:implicitOL}
We next review the implicit OL with a solution oracle from \cite{DongCZ2018}. This algorithm was first introduced  in its general form in \cite{Kulis2010}.

The \textit{implicit online learning} algorithm computes
\begin{equation} \label{implicitupdate}
\theta_{t+1} \coloneqq \text{argmin}_{\theta \in \Theta}L_t(\theta),
\end{equation}
where $L_t(\theta) = V_{\theta_t}(\theta) + \eta_t \ell_t(\theta)$ and $V_{\theta}(\theta')$ is the Bregman distance, $\eta_t$ is a step size. This approach does not rely on the first-order oracle on $\ell_t$, but rather assumes the existence of a \emph{solution oracle} to solve \eqref{implicitupdate}. \cite{Kulis2010} establish the following regret bound on the OL using implicit update \eqref{implicitupdate}.

\begin{theorem}[{\cite[Theorem 3.2.]{Kulis2010}}] \label{IOLregret}
Suppose $\Theta$ is convex, and $\ell_t: \Theta \mapsto \R$ is a convex and differentiable function for $t \in [T]$. Let $\theta^*$ be the offline optimal solution to $\min_{\theta \in \Theta} \sum_{t \in [T]} \ell_t(\theta)$. For any $0 < \alpha_t \leq \frac{L_t(\theta_{t+1})}{L_t(\theta_t)}$ for $t \in [T]$, for any step size $\eta_t>0$, an implicit OL algorithm with the update rule \eqref{implicitupdate} attains
\begin{equation} \label{Kulisregretbd}
R_T(\{\ell_t\}_{t \in [T]}, \{\theta_t\}_{t \in [T]}) \leq \sum_{t \in [T]} \frac{1}{\eta_t} \left [(1-\alpha_t)\eta_t \ell_t(\theta_t) + V_{\theta_t}(\theta^*)-V_{\theta_{t+1}}(\theta^*) \right].
\end{equation}
\end{theorem}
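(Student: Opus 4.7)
The plan is to follow the standard mirror-descent-style analysis and treat the implicit update as a proximal step, then absorb the gap between $\ell_t(\theta_t)$ and $\ell_t(\theta_{t+1})$ using the assumption on $\alpha_t$. First I would invoke the first-order optimality condition for the subproblem defining $\theta_{t+1}$. Since $\theta_{t+1}=\argmin_{\theta\in\Theta}\{V_{\theta_t}(\theta)+\eta_t\ell_t(\theta)\}$ and $\ell_t$ is differentiable while $V_{\theta_t}$ is convex and differentiable in its second argument, for every $\theta^*\in\Theta$,
\[
\langle \nabla V_{\theta_t}(\theta_{t+1})+\eta_t\nabla \ell_t(\theta_{t+1}),\,\theta^*-\theta_{t+1}\rangle\geq 0.
\]
Next I would apply the three-point identity $\langle \nabla V_{\theta_t}(\theta_{t+1}),\theta^*-\theta_{t+1}\rangle=V_{\theta_t}(\theta^*)-V_{\theta_{t+1}}(\theta^*)-V_{\theta_t}(\theta_{t+1})$, which is immediate from expanding $V$ using the \dgf{} $\omega$. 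Combined with convexity $\ell_t(\theta_{t+1})-\ell_t(\theta^*)\leq \langle\nabla\ell_t(\theta_{t+1}),\theta_{t+1}-\theta^*\rangle$, this yields the key one-step inequality
\[
\eta_t\bigl(\ell_t(\theta_{t+1})-\ell_t(\theta^*)\bigr)\;\leq\;V_{\theta_t}(\theta^*)-V_{\theta_{t+1}}(\theta^*)-V_{\theta_t}(\theta_{t+1}).
\]

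Since the regret is measured at $\theta_t$ rather than $\theta_{t+1}$, the remaining task is to bound $\ell_t(\theta_t)-\ell_t(\theta_{t+1})$. This is where the hypothesis $0<\alpha_t\leq L_t(\theta_{t+1})/L_t(\theta_t)$ enters. Rearranging the inequality $\alpha_t L_t(\theta_t)\leq L_t(\theta_{t+1})$, and using $V_{\theta_t}(\theta_t)=0$, I obtain
\[
\eta_t\bigl(\ell_t(\theta_t)-\ell_t(\theta_{t+1})\bigr)\;\leq\;V_{\theta_t}(\theta_{t+1})+(1-\alpha_t)\eta_t\ell_t(\theta_t).
\]
Adding this to the previous inequality, the $\pm V_{\theta_t}(\theta_{t+1})$ terms cancel, producing
\[
\eta_t\bigl(\ell_t(\theta_t)-\ell_t(\theta^*)\bigr)\;\leq\;(1-\alpha_t)\eta_t\ell_t(\theta_t)+V_{\theta_t}(\theta^*)-V_{\theta_{t+1}}(\theta^*).
\]
Dividing by $\eta_t$ and summing over $t\in[T]$ gives the claimed bound (the $V$-terms do not globally telescope because the prox center $\theta_t$ shifts inside each Bregman term, so the sum is left as written rather than collapsed).

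The main obstacle I anticipate is the bridging term $V_{\theta_t}(\theta_{t+1})$: the optimality condition for the implicit update naturally produces a bound on $\ell_t(\theta_{t+1})-\ell_t(\theta^*)$ with a helpful negative $-V_{\theta_t}(\theta_{t+1})$ term, while shifting from $\theta_{t+1}$ back to $\theta_t$ without a Lipschitz assumption on $\ell_t$ is nontrivial. The clever role of $\alpha_t$ is precisely to generate a matching positive $+V_{\theta_t}(\theta_{t+1})$ term at the cost of the additive slack $(1-\alpha_t)\eta_t\ell_t(\theta_t)$; recognizing that the hypothesis $\alpha_t\leq L_t(\theta_{t+1})/L_t(\theta_t)$ is exactly the right quantity to perform this cancellation is the key insight that makes the argument go through without any smoothness or Lipschitz assumption on the loss functions beyond convexity and differentiability.
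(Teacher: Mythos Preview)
Your argument is correct. The paper does not actually supply its own proof of this theorem: it is stated as a quotation of \cite[Theorem~3.2]{Kulis2010} and no proof appears in the appendix (only Lemmas~\ref{lem:l-sub-characterization}--\ref{lem:l-sim_convex}, Proposition~\ref{prop:pi}, and Corollary~\ref{cor:strconv} are proved there). So there is nothing in the paper to compare against beyond the citation itself.

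That said, what you wrote is precisely the standard proof one would expect from \cite{Kulis2010}: optimality of the implicit step plus the three-point identity gives the inequality at $\theta_{t+1}$ with the spare $-V_{\theta_t}(\theta_{t+1})$, and the hypothesis $\alpha_t L_t(\theta_t)\le L_t(\theta_{t+1})$ together with $V_{\theta_t}(\theta_t)=0$ furnishes the matching $+V_{\theta_t}(\theta_{t+1})$ needed to shift the loss evaluation from $\theta_{t+1}$ back to $\theta_t$. The algebra in your step~5 checks out, the cancellation is clean, and the final division by $\eta_t$ and summation gives exactly \eqref{Kulisregretbd}. Nothing is missing.
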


When $\ell_t$ is a convex and Lipschitz continuous function of $\theta$ and the domain $\Theta$ has a finite width with respect to the selected Bregman divergence, the regret bound \eqref{Kulisregretbd} further results in a $O(\sqrt{T})$ bound on $R_T(\{\ell_t\}_{t \in [T]}, \{\theta_t\}_{t \in [T]})$. 
\begin{theorem} \label{thm:IOLRegretBdExplicit}
Suppose $\Theta$ is convex, and for each $t \in [T]$, $\ell_t: \Theta \mapsto \R$ is a convex function of $\theta$ that is uniformly Lipschitz continuous with parameter $G$, and suppose $\max_{\theta_1, \theta_2 \in \Theta}V_{\theta_1}(\theta_2) \leq \widehat{\Omega}$. Then, by choosing $\eta_t = \frac{\sqrt{\widehat{\Omega}}}{G} \frac{1}{\sqrt{t}}$ for $t \in [T]$, an implicit OL algorithm with the update rule \eqref{implicitupdate} attains
\begin{equation} \label{eq:thm:regretbdexplicit}
R_T(\{\ell_t\}_{t \in [T]}, \{\theta_t\}_{t \in [T]}) \leq 2\sqrt{\widehat{\Omega}G^2 T}.  
\end{equation}
\end{theorem}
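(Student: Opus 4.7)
The plan is to apply Theorem \ref{IOLregret} with a carefully chosen $\alpha_t$ and then bound the two pieces of the resulting sum separately. First, I pick $\alpha_t = L_t(\theta_{t+1})/L_t(\theta_t)$, saturating the allowed upper bound. (A technical caveat: this ratio is positive only when $L_t(\theta_t)$ and $L_t(\theta_{t+1})$ share a sign; since adding a constant to each $\ell_t$ leaves the regret unchanged and does not affect the Lipschitz constant, one may assume WLOG that $\ell_t \geq 0$ on $\Theta$, making both $L$-values nonnegative.) With this choice, $L_t(\theta_t) = \eta_t \ell_t(\theta_t)$ because $V_{\theta_t}(\theta_t) = 0$, so $(1-\alpha_t)\eta_t \ell_t(\theta_t) = L_t(\theta_t) - L_t(\theta_{t+1}) = \eta_t\bigl[\ell_t(\theta_t) - \ell_t(\theta_{t+1})\bigr] - V_{\theta_t}(\theta_{t+1})$. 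Applying $G$-Lipschitz continuity to the difference and the modulus-$1$ strong convexity of $\omega$ to the Bregman distance, and then completing the square in $r := \|\theta_t - \theta_{t+1}\|$ via $\eta_t G r - \tfrac{1}{2} r^2 \leq \tfrac{1}{2}\eta_t^2 G^2$, gives the clean bound $(1-\alpha_t)\eta_t \ell_t(\theta_t) \leq \tfrac{1}{2}\eta_t^2 G^2$.

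Substituting back into \eqref{Kulisregretbd}, the regret is at most $\sum_{t=1}^T \tfrac{1}{2}\eta_t G^2 + \sum_{t=1}^T \eta_t^{-1}\bigl[V_{\theta_t}(\theta^*) - V_{\theta_{t+1}}(\theta^*)\bigr]$. Plugging $\eta_t = \sqrt{\widehat{\Omega}}/(G\sqrt{t})$ into the first sum and using $\sum_{t=1}^T t^{-1/2} \leq 2\sqrt{T}$ yields $G\sqrt{\widehat{\Omega} T}$. For the second sum, summation by parts rewrites it as $\eta_1^{-1} V_{\theta_1}(\theta^*) - \eta_T^{-1} V_{\theta_{T+1}}(\theta^*) + \sum_{t=2}^T V_{\theta_t}(\theta^*)\bigl(\eta_t^{-1} - \eta_{t-1}^{-1}\bigr)$; since $\eta_t$ is non-increasing (so $\eta_t^{-1}-\eta_{t-1}^{-1}\geq 0$), $V_{\theta_t}(\theta^*) \leq \widehat{\Omega}$ by the width hypothesis, and $V_{\theta_{T+1}}(\theta^*) \geq 0$, this telescopes to at most $\widehat{\Omega}/\eta_T = G\sqrt{\widehat{\Omega} T}$. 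Adding the two pieces gives $R_T \leq 2G\sqrt{\widehat{\Omega} T} = 2\sqrt{\widehat{\Omega} G^2 T}$.

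The main obstacle is the first step: choosing $\alpha_t$ so that the awkward factor $(1-\alpha_t)\eta_t \ell_t(\theta_t)$ in \eqref{Kulisregretbd} does not spoil the $\sqrt{T}$ rate. The saturating choice $\alpha_t = L_t(\theta_{t+1})/L_t(\theta_t)$ is exactly what converts this otherwise opaque quantity into the sum of a first-order change in $\ell_t$ and a Bregman term, after which $G$-Lipschitz continuity and the modulus-$1$ strong convexity of $\omega$ interact cleanly through a quadratic-in-$r$ completion-of-squares. Everything that follows---the $\sum 1/\sqrt{t} \leq 2\sqrt{T}$ estimate and the summation-by-parts on the Bregman telescoping---is routine bookkeeping.
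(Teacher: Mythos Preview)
Your proof is correct. The paper does not actually supply a proof of Theorem~\ref{thm:IOLRegretBdExplicit}: it is stated immediately after Theorem~\ref{IOLregret} as a consequence of \eqref{Kulisregretbd}, but the appendix only contains proofs of Lemmas~\ref{lem:l-sub-characterization} and~\ref{lem:l-sim_convex}, Proposition~\ref{prop:pi}, and Corollary~\ref{cor:strconv}. So there is nothing to compare against beyond the paper's implicit claim that the $O(\sqrt{T})$ bound follows routinely from \eqref{Kulisregretbd}.

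Your derivation fills in exactly those routine details in the natural way: saturating $\alpha_t = L_t(\theta_{t+1})/L_t(\theta_t)$ so that $(1-\alpha_t)\eta_t\ell_t(\theta_t) = L_t(\theta_t)-L_t(\theta_{t+1})$, then bounding this via Lipschitzness and strong convexity of $\omega$ with the completion-of-squares $\eta_t G r - \tfrac{1}{2}r^2 \le \tfrac{1}{2}\eta_t^2 G^2$, and finally handling the Bregman telescoping by Abel summation against the monotone stepsizes. The arithmetic with $\eta_t = \sqrt{\widehat{\Omega}}/(G\sqrt{t})$ checks out, as does your caveat about shifting $\ell_t$ by a constant to ensure $L_t \ge 0$ (and the boundary case $L_t(\theta_t)=0$ is harmless since then the term in question vanishes). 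This is precisely the standard argument one would expect the paper to have in mind.
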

To apply Theorem \ref{thm:IOLRegretBdExplicit}, we can choose the Lipschitz parameter $G$ by definition. For instance, with the loss functions $\{\ell_t^{sim}\}_{t \in [T]}$, we have $\lvert \ell_t^{sim}(\theta) - \ell_t^{sim}(\theta^{'}) \rvert = \lvert \langle \theta - \theta^{'},c(y_t)-c(x(\theta_t;u_t)) \rangle \rvert \leq \lVert \theta - \theta^{'} \rVert \lVert c(y_t)-c(x(\theta_t;u_t)) \rVert$, hence $G \geq 2 \max \{\norm{c(x)}_*: x \in \mathcal{X}\}$ suffices. 
Alternatively, with $\{\ell_t^{pre}\}_{t \in [T]}$, $\lvert \ell_t^{pre}(\theta) - \ell_t^{pre}(\theta^{'}) \rvert = \lvert \langle x(\theta^{'};u_t) - x(\theta;u_t), 2y_t - x(\theta^{'};u_t) - x(\theta;u_t) \rangle \rvert \leq \lVert x(\theta^{'};u_t) - x(\theta;u_t)\rVert \lVert 2y_t - x(\theta^{'};u_t) - x(\theta;u_t) \rVert$, so we need additional information about $x(\theta;u_t)$ to decide a suitable $G$. The set width $\hat{\Omega}$ only depends on $\Theta$ and the Bregman distance definition.

\subsection{Comparison with the Existing Approaches} \label{sec:litcomp}
\cite{BarmannMPS2018} study online inverse optimization under perfect information where the agent's objective $f$ is a bilinear function of $\theta$ and $x$, i.e., $f({x};\theta)=\la \theta, {x}\ra$. 
They suggest using the online gradient descent and the Multiplicative Weights Update (MWU) algorithms to generate $\{\theta_t\}_{t \in [T]}$ estimates and show via separate analysis that the resulting estimates have vanishing average losses with respect to $\ell^{est}$ and $\ell^{sub}$ (at the rate $O({1/\sqrt{T}})$) but do not present their regret bounds or analyze $\ell^{pre}$ loss. 
Note that both online gradient descent and MWU algorithm are simply special cases of the online MD algorithm customized to the geometry of the problem domain. 
Moreover, the setting studied in \cite{BarmannMPS2018} clearly satisfies our Assumption~\ref{assp:fstr} and the perfect information assumption, hence we can utilize our $\ell^{sim}$-based OL framework equipped with online MD and directly derive average regret bounds of $\textstyle O({1/\sqrt{T}})$ on $\ell^{sim}$, $\ell^{sub}$ and $\ell^{est}$. 
In addition, as opposed to the simple bilinear form of $f$ considered in \cite{BarmannMPS2018}, our framework can handle more general functions $f$ in the forward problem when $f_1({x};{u})$ and/or $f_2(\theta; {u})$ are nontrivial. In this respect, the case of strongly convex $f_1({x}; {u})$ is of special interest, since in this case, through Corollary~\ref{cor:strconv}, our framework also leads to regret bound with respect to $\ell^{pre}$. 

\cite{DongCZ2018} study the following problem where $f$ is linear in $\theta$ and strongly convex in $x$
\begin{gather} \label{sys:exli}
    \begin{aligned}
    \min_{{x}} \left\{\textstyle  \frac{1}{2} {x}^\top P {x} - \langle \theta_{true}, {x} \rangle:~ {x} \in \mathcal{X}({u}) \right\} .
    \end{aligned}
\end{gather}
Here, $P$ is a positive definite matrix and $\mathcal{X}({u})$ is the agent's feasible domain determined by the external signal fixed as ${u}$. In this setting, \cite{DongCZ2018} propose a regret minimization algorithm utilizing the implicit OL method (\cite{Kulis2010}) with a nonconvex MISOCP oracle. They focus on the prediction loss $\ell^{pre}$, and establish a $O(\sqrt{T})$ bound on $R_T(\{\ell_t^{pre}\}_{t \in [T]}, \{\theta_t\}_{t \in [T]})$ whenever $\ell_t^{pre}(\theta)$ is a convex function of $\theta$. A main limitation of their approach is that the convexity of $\ell^{pre}$ does not hold in general. 
Although they identify a technical sufficient condition \cite[Assumption 3.3]{DongCZ2018} that can guarantee convexity of $\ell^{pre}$, they also remark that this condition is restrictive and very hard to verify in practice even for the simplest form of problem classes. In fact, the only example they identify as satisfying their assumption is when the agent's optimization problem is \eqref{sys:exli} and the set $\CX({u})$ must \emph{always} contain the minimizer of the unrestricted objective minimization problem, i.e., $P^{-1}\theta_{true} \in \CX({u})$ for all possible ${u}$. 

When the agent's problem has the specific form of \eqref{sys:exli}, the algorithm from \cite{DongCZ2018} updates $\theta_{t+1}$ as the optimal solution of the following bilevel program:
\[
\theta_{t+1} \coloneqq \argmin_{\theta \in \Theta} \left\{\frac{1}{2}\norm{\theta-\theta_t}^2 + \eta_t \norm{{y}_t - {x}(\theta;{u}_t)}^2: {x}(\theta;{u}_t) \in \argmin_{{x}} \left\{ 
\frac{1}{2} {x}^\top P {x} -\la \theta,{x} \ra: {x} \in \CX({u}_t)
\right\}
\right\}.
\]
It was shown in \cite{DongCZ2018} that when the feasible domain $\CX({u}_t)$ is polyhedral, this bilevel program can be represented as a MISOCP. Consequently, the implicit OL algorithm of \cite{DongCZ2018} utilizes an MISOCP based solution oracle to generate $\{\theta_t\}_{t \in [T]}$. The main convergence result \cite[Theorem 3.2]{DongCZ2018} proves that under their assumptions by choosing the step size $\eta_t \propto 1/\sqrt{t}$, the sequence of estimates$\{\theta_t\}_{t \in [T]}$ generated with the above update yields a $O(\sqrt{T})$ bound on the regret $R_T(\{\ell_t^{pre}\}_{t \in [T]}, \{\theta_t\}_{t \in [T]})$.

Note that the format of $f$ in \eqref{sys:exli} satisfies our Assumption~\ref{assp:fstr}, and consequently $\ell_t^{sim}$ is guaranteed to be convex for any $\CX({u})$. Therefore, our OCO framework based on minimizing regret for loss functions $\{\ell_t^{sim}\}_{t \in [T]}$ is applicable to \eqref{sys:exli}.  
In addition, in the perfect information setting, through Proposition~\ref{prop:pi} and Corollary~\ref{cor:strconv}, our framework can provide regret bounds with respect to \emph{all} of $\ell^{sim}$, $\ell^{ag}$, $\ell^{est}$, $\ell^{sub}$ and $\ell^{pre}$, without further structural assumptions on the agent's domain. 
In contrast, the implicit OL approach of \cite{DongCZ2018} for minimizing regret with respect to $\ell^{pre}$ requires additional conditions on the agent's domain (see \cite[Assumptions 3.1,3.2,3.3]{DongCZ2018}) in order to guarantee a regret bound. In particular, it is specifically focused on $\ell^{pre}$ and provides no insight on other performance measures of interest captured by $\ell^{est}$ and $\ell^{sub}$ either. 
Moreover, online MD in our framework uses a much simpler (and computationally faster) first-order oracle in contrast to the expensive MISOCP oracle in  the implicit OL approach of \cite{DongCZ2018}. 
One aspect that \cite{DongCZ2018} emphasize but we do not address is the noise in observations: they prove theoretical regret bounds for the case where ${y}_t$ is a noisy observation of ${x}(\theta_{true};{u}_t)$. However, as previously discussed, theoretical guarantees of our framework based on $\ell^{sim}$ fail to readily extend to the imperfect information setup.

\section{Computational Study} \label{sec:app}
We perform numerical experiments on a practical application that is motivated by a company (learner) seeking to learn about its customer's (agent's) preferences in a changing market. We assume the customer is a rational decision maker, and in any given market situation, her/his action reflects accurately her/his optimal preferences. 
These experiments do not aim to provide structural insights on specific instances, rather, our main purpose is to demonstrate the performance of $\ell^{sim}$ based OCO algorithms from various aspects and the comparison with an alternative $\ell^{pre}$-based approach in \cite{DongCZ2018}.

We first focus on the case when perfect information is available, i.e., there is no noise in learner’s observations of the agent’s optimal actions, and address three main questions. First, are there notable performance differences among OL algorithms based on different oracles? Second, how do the algorithm performances vary in terms of different loss functions? Third, does the structure of the agent’s feasible region affect complexity of the learning problem and the algorithm performances? While discussing these questions, we also compare against existing algorithms from the literature.

In the second part of our numerical study, we examine the robustness of these OL algorithms under imperfect information, i.e., when there is random noise to the learner’s observations of the agent’s optimal actions. Recall that in the imperfect information setup, our OL based approach is not guaranteed to provide low regret guarantees; see Appendix~\ref{app:imperinfo} for a discussion of the main issues. Hence, these experiments essentially shed light to their empirical performance in the noisy setup.

All algorithms are coded in Python 3.8, and Gurobi 8.1.1 with default settings is used to solve the mathematical programs needed for the subproblems associated with the corresponding oracles. We limit the solution time of each mathematical program to be at most 3600 seconds. We have not hit this imposed time limit in any of our experiments. All experiments are conducted on a server with 2.8 GHz processor and 64GB memory.

\subsection{Problem Instances} \label{app:instance}
We consider a market with $n$ products that evolves over a finite time horizon $T$, e.g., the product prices change. These changes consequently impact the agent's feasible actions; in this case, agents are customers interested in purchasing the products. For each $t \in [T]$, we let $u_t$ denote the market parameters relevant to the agent's decisions at period $t$. When constraint parameters are fixed as $u_t$, an agent's action ${x}(\theta_{true}; u_t)$ is an optimal solution to an optimization problem parametrized by $u_t$ and $\theta_{true}$, where $\theta_{true}$ captures the agent's preferences over the products. 
We model the agent's optimization problem as a maximization of her/his utility function subject to feasibility constraints. 
The learner knows the agent's decision problem up to the parameter vector $\theta_{true}$, and the learner's goal is to estimate $\theta_{true}$ using observations of the agent's actions ${y}_t$ in response to the market conditions $u_t$ at each period $t\in[T]$.

We study two different forms for the agent's utility function.
\begin{enumerate}
\item[(a)] For direct comparison with \cite{DongCZ2018}, we examine the case where the agent's utility function has the quadratic form~\eqref{sys:exli}, i.e., the agent's action ${x}(\theta_{true}; u_t)$ is given by
\begin{gather}
    \begin{aligned}
   {x}(\theta_{true}; u_t) \coloneqq
   \argmax_{{x}} \left\{ -\frac{1}{2}{x}^\top P{x} + \langle \theta_{true}, {x} \rangle 
    :~ {x} \in \mathcal{X}(u_t) \right\},
    \end{aligned} \label{ex:prefinmkt}
\end{gather}
where $P \in \Se_{++}^n$ is a fixed positive definite matrix known by both the learner and the agent and $\mathcal{X}(u_t)$ represents the domain for the agent's feasible actions determined by the market parameters $u_t$. 

\item[(b)] We also examine a second setup where the agent has a CES utility function with $\rho = 2$. Hence, in period $t$, the agent's action ${x}(\theta_{true}; u_t)$ is given by
\begin{gather}
    \begin{aligned}
   {x}(\theta_{true}; u_t) \coloneqq
   \argmax_{{x}} \left \{\sum_{i \in [n]} -(\theta_{true})_i x_i^2
    :~ {x} \in \mathcal{X}(u_t) \right\}.
    \end{aligned} \label{ex:prefces}
\end{gather}
Note that this setup with a CES utility has not been previously studied in an OL framework.
\end{enumerate}

These particular forms of utility functions in \eqref{ex:prefinmkt} and \eqref{ex:prefces} imply that the dimensions of $\theta$ and $x$ are the same, i.e., $p=n$. Moreover, observe that both of the objective functions in~\eqref{ex:prefinmkt}~and~\eqref{ex:prefces} satisfy Assumption~\ref{assp:fstr}, and thus in both cases  $\ell^{sim}_t(\theta)$ is convex in $\theta$.

To identify the impact of agent's feasible region on the complexity of the problem as well as on the performance of the learning algorithms, we experiment on a variety of settings for $\mathcal{X}(u_t)$. 
\begin{enumerate}
    \item[(i)] \emph{Continuous knapsack} domain: in this setting, we impose only a budget constraint on the agent: $\mathcal{X}(u_t) = \mathcal{X}^{ck}({p}_t,b_t) \coloneqq \{{x} \in \R_+^n: \langle {p}_t, {x} \rangle \leq b_t \}$, where the parameters ${p}_t\in\R^n_+$ correspond to the product prices and $b_t\in\R_+$ is the budget available to the customer during time period $t$. Note that both ${p}_t$ and $b_t$ can vary in each time period $t\in[T]$. 
    \item[(ii)] \emph{Continuous polytope} domain: here, we generalize the continuous knapsack domain and model general resource constraints resulting in a polytope as the feasible region $\mathcal{X}(u_t) = \mathcal{X}^{cp}(A_t,{c}_t) = \{{x} \in \R_+^n: A_t {x} \leq {c}_t \}$, where all the parameters are nonnegative. 
    \item[(iii)] \emph{Binary knapsack} domain: in this case, we again impose a single budget constraint, but also require that the agent's action is a binary vector: $\mathcal{X}(u_t) = \mathcal{X}^{bk}({p}_t,b_t) \coloneqq \{{x} \in \{0,1\}^n: \langle {p}_t, {x} \rangle \leq b_t \}$.
    \item[(iv)] \emph{Equality constrained knapsack} domain: that is, $\mathcal{X}(u_t) = \mathcal{X}^{eck}({p}_t,b_t) \coloneqq \{{x} \in \R_+^n: \langle {p}_t, {x} \rangle = b_t \}$. 
\end{enumerate}
We ran experiments with the utility function \eqref{ex:prefinmkt} where we choose the matrix $P$ to be a positive definite diagonal matrix and generate each of its diagonal entries $P_{ii}$ by first drawing a number from $[1,21]$ uniformly and then normalizing the drawn vector $(P_{11},\ldots,P_{nn})$ to have a unit $\ell_1$-norm, and we also set the domain to be $\mathcal{X}^{ck}(p_t,b_t)$, $\mathcal{X}^{cp}(A_t,c_t)$, or $\mathcal{X}^{bk}(p_t,b_t)$. 
In the case of CES utility function \eqref{ex:prefces}, for implementation simplicity, we use instances with the domain $\mathcal{X}^{eck}(u_t)$.

In all of our experiments, we consider a market with $n = 50$ goods. We compare OL algorithms by running $T = 500$ iterations on a batch of $50$ randomly generated instances for each setting. The domain $\Theta$ is set be a unit simplex, i.e., $\Theta = \left\{\theta \in \mathbb{R}_+^n: \sum_{i \in [n]} \theta_i = 1\right\}$. We follow the same instance generation methodology used in \cite[Section 4.1]{BarmannMPS2018} for generating the true parameter $\theta_{true}$ and the agent's domain $\mathcal{X}(u_t)$. In each instance, $\theta_{true}$ is obtained by drawing a random sample from a uniform distribution over $[1,1000]^n$ and then normalizing the sampled vector to have a unit $\ell_1$-norm. 
In the case of $\mathcal{X}^{ck}({p}_t,b_t),~\mathcal{X}^{bk}({p}_t,b_t)$, and $\mathcal{X}^{eck}({p}_t,b_t)$, for all $t \in [T]$, the constraint parameters ${p}_t,b_t$ are generated randomly as follows: ${p}_t$ is set as $\theta_{true} + 100\cdot \mathbf{1}_n + {r}$, where ${r}$ is an integer vector sampled from a \emph{discrete uniform} distribution over the collection of integer vectors in $[-10,10]^n$ (\emph{numpy.random.randint} function is used). The budget $b_t$ is selected uniformly random from the range $\left[1,\sum_{i=1}^n (p_t)_i\right]$.  In the case of continuous polytope domain $\mathcal{X}^{cp}(A_t,{c}_t)$, we choose $A_t$ as an $m \times n$ matrix with $m = 10$, where each row of $A_t$ is generated in the same way as ${p}_t$, and  each coordinate $i$ in the vector ${c}_t$ is drawn uniformly random from $[1,\sum_{j=1}^m (A_t)_{ji}]$. 

In the OL setup, at time step $t$, the learner observes the signal $u_t$ and the agent's action, and uses the information revealed so far to construct the estimate $\theta_{t+1}$. Under perfect information, we have ${y}_t={x}(\theta_{true}; u_t)$ for all $t$; under imperfect information, we assume ${y}_t={x}(\theta_{true}; u_t) + \epsilon_t$, where $\epsilon_t$ denotes the random noise. 
In the noisy setup, each coordinate in $\epsilon_t$ is randomly drawn from a uniform distribution over a given range. We consider two ranges to simulate small and large noises, and we choose the range bounds based on the average $\delta := \frac{1}{T}\sum_{t \in [T]}\norm{{x}(\theta_{true};{u}_t)}$: the small noises are generated with the range $[-\delta/n, \delta/n]$, and the large noises are generated with $[-\delta, \delta]$.

\subsection{Implementation Details} \label{app:implement}

In order to compute the estimates $\{\theta_t\}_{t \in [T]}$, we implement three OL algorithms and compare their performances. By taking advantage of the convexity of $\ell^{sim}$, we design two OL algorithms minimizing regret with respect to $\ell^{sim}$: one equipped with a first-order oracle and one with a solution oracle. In addition, for comparison with the literature, we implemented another implicit OL algorithm with a solution oracle aimed to minimize the regret associated with $\ell^{pre}$, i.e., the one from \cite{DongCZ2018} that utilizes an MISOCP solution oracle. We provided precisely the same dynamic observations, i.e., the realizations of signals $u_t$ along with the agent's optimum action ${x}(\theta_{true}; u_t)$ in each iteration $t\in[T]$ to all of these OL algorithms.

In the case of OL with the first-order oracle, because $\Theta$ is a unit simplex, we use the negative entropy function $\omega(\theta) = \sum_{i =1}^n \theta_i \ln(\theta_i)$ as the distance generating function in the definition of Bregman distance $V_{\theta_t}(\theta)$. Then, the update rule \eqref{eq:thetaupdate} for the OL with first-order oracle is given explicitly by the following formula, where $s_t(\theta_t)$ is a subgradient of $\ell^{sim}_t(\theta)$ at $\theta_t$.
\[
(\theta_{t+1})_i = \frac{(\theta_t)_i \exp(-\eta_t (s_t(\theta_t))_i)}{\sum_{j=1}^n (\theta_t)_j \exp(-\eta_t (s_t(\theta_t))_j)}, ~\text{for all } i \in [n].
\]

The main challenge in the implementation of implicit OL algorithm with a solution oracle is whether one can design a computationally tractable solution oracle. 
When the loss function $\ell_t(\theta)$ used in the implicit OL involves $x(\theta;u_t)$, as is the case in all loss functions from Section \ref{sec:inverseprob} except $\ell_t^{sim}(\theta)$, \eqref{implicitupdate} is a bilevel program. Bilevel programs are difficult to solve in general, but can be reformulated into a single level problem using KKT conditions of the inner level problem whenever the inner level is a convex problem. In contrast to this, when $\ell^{sim}_t$ is used as the loss function in an implicit OL algorithm, \eqref{implicitupdate} becomes a single level optimization problem in $\theta$ and thus the solution oracle becomes much simpler. Consequently, 
we study two variants of the implicit OL algorithm based on $\ell^{sim}$ and $\ell^{pre}$ that are necessarily equipped with different solution oracles.

In the first variant, we design an implicit OL algorithm to minimize the regret with respect to the loss function $\ell^{sim}$. Using the squared Euclidean norm as the d.g.f., we arrive at the implicit OL algorithm with a solution oracle that updates $\theta_{t+1}$ as the optimal solution to 
\begin{gather*} 
    \begin{aligned}
    \theta_{t+1} \coloneqq \argmin_{\theta \in \Theta} \frac{1}{2}\norm{\theta-\theta_t}^2 + \eta_t \ell^{sim}_t(\theta).
    \end{aligned}
\end{gather*}
Under Assumption~\ref{assp:fstr}, $\ell^{sim}_t(\theta)$ is convex in $\theta$, and when the domain $\Theta$ is convex, the above problem is a convex program. Therefore, the implementation requires only a convex solution oracle; see Appendix~\ref{sec:app-sol} for the explicit formulations of these oracles. 

For comparison purposes, we implement a second variant of the implicit OL algorithm minimizing regret with respect to the loss function $\ell^{pre}$. By following the same approach taken in \cite{DongCZ2018}, we use the squared Euclidean norm as the d.g.f., and the resulting solution oracle updates $\theta_{t+1}$ with the following bilevel program, where the inner level computes $x(\theta,u_t)$ used in $\ell^{pre}_t(\theta)$:
\begin{gather*}
    \begin{aligned}
    \theta_{t+1} \coloneqq \argmin_{\theta \in \Theta} \frac{1}{2}\norm{\theta-\theta_t}^2 + \eta_t \ell^{pre}_t(\theta).
    \end{aligned}
\end{gather*}
When the agent is maximizing a concave objective function over a polyhedral domain $\mathcal{X}(u_t)$, we can reformulate the above bilevel program into a mixed integer program (MIP).

Consequently, at time $t$, this $\ell^{pre}$-based implicit OL algorithm requires a nonconvex oracle given by the MIP formulation to obtain $\theta_{t+1}$. In the case of \eqref{ex:prefinmkt}, it was demonstrated in \cite{DongCZ2018} that when the domain $\mathcal{X}(u_t)$ is polyhedral, the MIP reformulation admits a nice MISOCP structure due to the quadratic objective. For completeness, we provide the MISOCP reformulation of this solution oracle in Appendix \ref{sec:app-sol}. Note that due to the advanced capabilities of modern MIP solvers, the resulting MISOCP still remains computationally tractable whenever the scale of the agent's problem is relatively small. 

On the other hand, when the domain of the inner problem $\mathcal{X}(u_t)$ is nonconvex, e.g., when we consider $\mathcal{X}^{bk}(p_t,b_t)$ that involves binary variables, or when the agent maximizes a nonconcave function over a convex domain $\mathcal{X}(u_t)$ as in the case of \eqref{ex:prefces} for $\theta\notin\R^n_+$, 
we no longer have access to KKT based optimality certificates for the inner problem. Consequently, in such cases, we do not know how to design a computationally tractable solution oracle, and this is an open question. Therefore, we did not experiment with the $\ell^{pre}$-based implicit OL algorithm in these cases. 

\subsection{Perfect Information Experiments} \label{app:perexp}
In this section, we discuss our numerical results along with plots that highlight our key observations pertinent to the questions of interest to the perfect information case listed at the beginning of Section \ref{sec:app}.

\subsubsection{Learning a Quadratic Utility Function} \label{app:per-qua}
In this case, we assume that the agent's utility function is of form \eqref{ex:prefinmkt}. We first compare the performance of the three OL approaches in terms of both average regret performance and the solution time. Figures \ref{fig:50-n50K-reg} and \ref{fig:50-n50P-reg} display the means of average (expected) regret performance of the iterates $\{\theta_t\}_{t\in[T]}$ returned by the OL algorithms with respect to all five loss functions of interest for the instances where the agent's domain is of continuous knapsack and polytope type, respectively; the shaded areas indicate 95\% confidence interval for the means.
These means are computed based on all $50$ random instances generated in the experiment. 
In Figures \ref{fig:50-n50K-reg} and \ref{fig:50-n50P-reg} (and all the later ones as well), the scale of loss functions naturally differ because the associated regret and loss values are evaluated with respect to different terms present in their corresponding loss definitions. 
In terms of the rate at which the average regret converges, in the case of the continuous knapsack instances, Figure \ref{fig:50-n50K-reg} shows that regardless of the loss function used to evaluate the performance, all three OL algorithms have quite similar performances.
This empirical observation is in line with the theoretical regret guarantees given in Section~\ref{sec:OL}; recall that this particular domain type was the focus of \cite{DongCZ2018}, and their analysis presents some restrictive assumptions guaranteeing convergence of their approach on this type of instances. For the continuous polytope instances, Figure \ref{fig:50-n50P-reg} demonstrates similar performances from the two OL algorithms based on $\ell^{sim}$, but highlights the drastically different performance of the implicit OL with the $\ell^{pre}$-minimizing solution oracle, which now leads to average regrets converging to non-zero values.
Recall from Section \ref{sec:implicitOL}, the regret convergence of an implicit OL algorithm with a solution oracle requires the convexity of the selected loss function. In fact, \cite{DongCZ2018} adopt further strong assumptions on $\mathcal{X}(u_t)$ to guarantee that $\ell^{pre}$ is a convex function of $\theta$ when the agent's problem is of form~\eqref{ex:prefinmkt} with $\mathcal{X}(u_t) = \mathcal{X}^{ck}({p}_t,b_t)$. Our empirical results indicate that these assumptions are indeed hard to satisfy in general and our randomly generated continuous polytope instances do not necessarily satisfy their required assumption. In contrast, since $\ell^{sim}$ is guaranteed to be a convex function of $\theta$ when the agent's problem is of form~\eqref{ex:prefinmkt} regardless of the structure of the agent's domain $\mathcal{X}(u_t)$, the average regrets of the $\ell^{sim}$-based implicit OL algorithm with the solution oracle converge to zero for instances with polytope domain as well. Furthermore, we note that the regret convergence of the $\ell^{sim}$-based implicit OL algorithm with the solution oracle is slightly better than the OL with the first-order oracle in both types of instances.

\begin{figure}
\centering
  \includegraphics[width=0.9\linewidth]{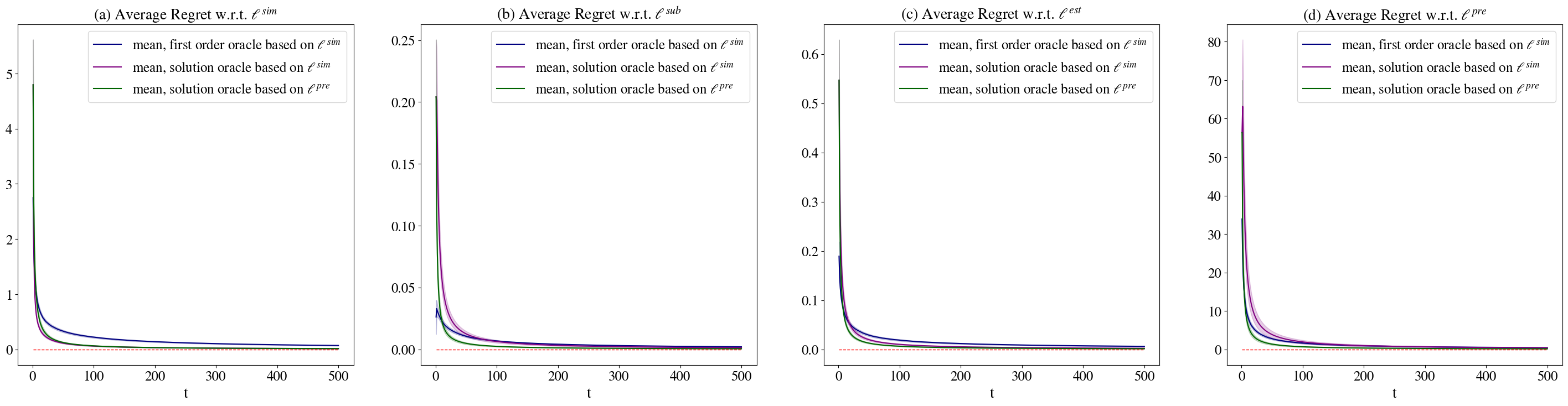}
  \caption{Means of average regret with respect to different loss functions over $T=500$ iterations for continuous knapsack instances; the shaded region is 95\% confidence interval for the means.}
  \label{fig:50-n50K-reg}
\end{figure}

\begin{figure}
\centering
    \includegraphics[width=0.9\linewidth]{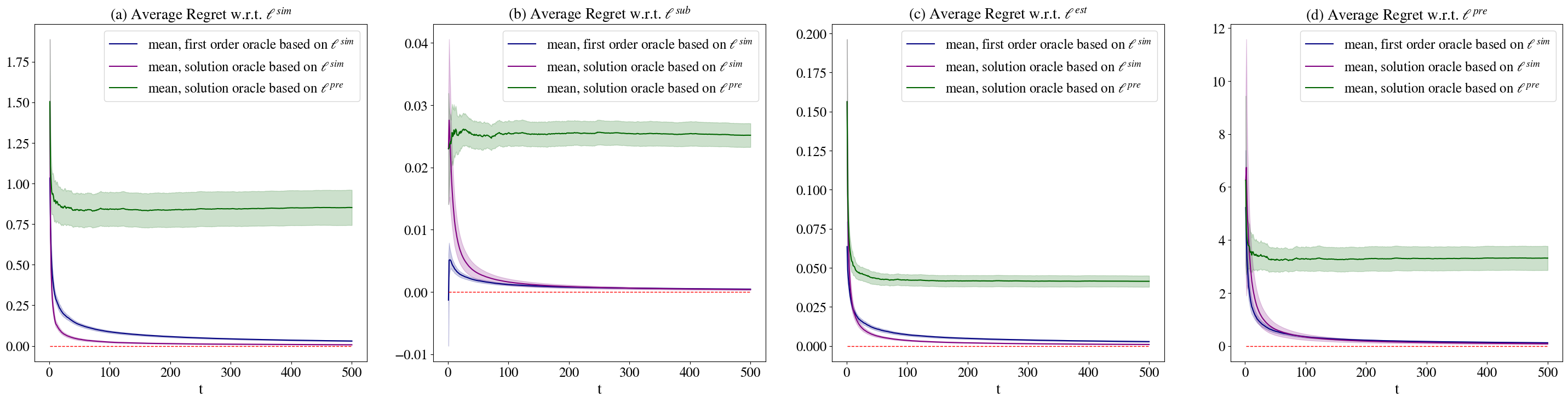}
   \caption{ Means of average regrets with respect to different loss functions over $T=500$ iterations for continuous polytope instances; the shaded region is 95\% confidence interval for the means.}
     \label{fig:50-n50P-reg}
\end{figure}

In our numerical study, we observe almost no variation in terms of the solution time of the OL algorithms across different random instances generated from the same setting.  We thereby report the time spent by all three OL algorithms on a randomly selected instance from our problem set. When computing the solution time at iteration $t$, we always ignore the time taken to find ${x}(\theta_{true};u_t)$. In iteration $t$ of the OL with the first-order oracle, we account for the time to compute ${x}(\theta_t;u_t)$ and generate $\theta_{t+1}$ using the first-order oracle. Lastly, in each iteration of both of the $\ell^{sim}$- and $\ell^{pre}$-based implicit OL algorithms with a solution oracle, we account for the time used by the corresponding solution oracles in updating $\theta_{t+1}$. 
For an arbitrary instance with the continuous knapsack domain, OL with the first-order oracle finishes in about 0.08 seconds, $\ell^{sim}$-based implicit OL with the solution oracle takes 2.03 seconds, and $\ell^{pre}$-based implicit OL with the solution oracle takes 146 seconds. These highlight that, by a significant margin, our OL algorithms minimizing regret with respect to the loss function $\ell^{sim}$ and utilizing the first-order oracle and the solution oracle are much more computationally efficient than the $\ell^{pre}$-based implicit OL with the MISOCP solution oracle one from \cite{DongCZ2018}.

We next analyze whether the agent's domain structure has any visible effect on the overall regret performance of the OL with the first-order oracle. From Figures \ref{fig:50-n50K-reg} and \ref{fig:50-n50P-reg}, we observe that the superiority of the OL with first-order oracle in terms of the average regret is slightly more obvious in the continuous knapsack setting than in the polytope setting. In Figure \ref{fig:binarycompare}, we compare the means of average regrets for the continuous knapsack instances versus the binary knapsack instances. The regret  performances with respect to the loss functions $\ell^{sub}$ and $\ell^{est}$  seem to vary only slightly when the agent's domain type changes from continuous knapsack to binary knapsack; yet these differences are slightly more noticeable in the case of loss functions $\ell^{pre}$ and $\ell^{sim}$. 

\begin{figure}
\centering
 \includegraphics[width=0.9\linewidth]{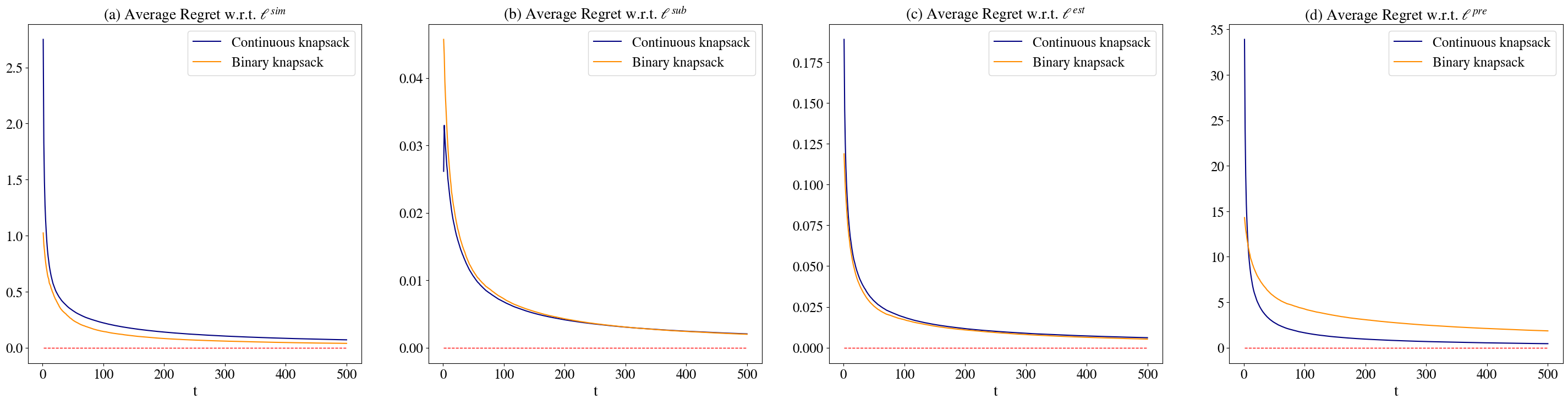}
  \caption{Means of average regret with respect to different loss functions
  contrasting continuous knapsack instances with binary knapsack instances,
  when OL with the first-order oracle is used.}
  \label{fig:binarycompare}
\end{figure}

Lastly, we examine the regret performance of OL with the first-order oracle with respect to different loss functions $\ell(\theta)$. From the experiment results from continuous knapsack and continuous polytope instances (respectively Figures \ref{fig:50-n50K-reg} and \ref{fig:50-n50P-reg}), we observe that the average regret with respect to any of the four loss functions convergences roughly at the same rate, but the corresponding regret bounds differ in their scales. This is not surprising, as the corresponding regrets are based on different terms, e.g., norms of solutions or objective function values, etc. Moreover, recall from Section~\ref{sec:pi} that in the perfect information case the following relationship among the regret bounds with respect to different loss functions (here for simplicity in notation, we denote $R^{sim}_T \coloneqq  R_T(\{\ell^{sim}_t\}_{t \in [T]}, \{\theta_t\}_{t \in [T]})$, etc.) holds: 
$R^{sim}_T \geq  R^{sub}_T +  R^{est}_T \geq \frac{\gamma}{2} R^{pre}_T \geq 0,$
where $\gamma$ is the strong convexity parameter of the quadratic objective function in \eqref{ex:prefinmkt}. Recall that our instance generation guarantees $P \in \Se_{++}^n$, i.e., its smallest eigenvalue $\lambda_{min}(P) > 0$, and then by the definition of strong convexity, we deduce $\gamma = \lambda_{min}(P)$. Figure \ref{fig:losscompare-log} displays (on a logarithm scale) the means of the average regrets for different loss functions for $\theta_t$ estimates generated from the OL with the first-order oracle on instances in which the agent's domain is either a continuous knapsack, polytope, or a binary knapsack type. These results also confirm the theoretical relationship among the regrets for different loss functions we have established in Section~\ref{sec:pi}. 
\begin{figure}
\centering
 \includegraphics[width=0.7\linewidth]{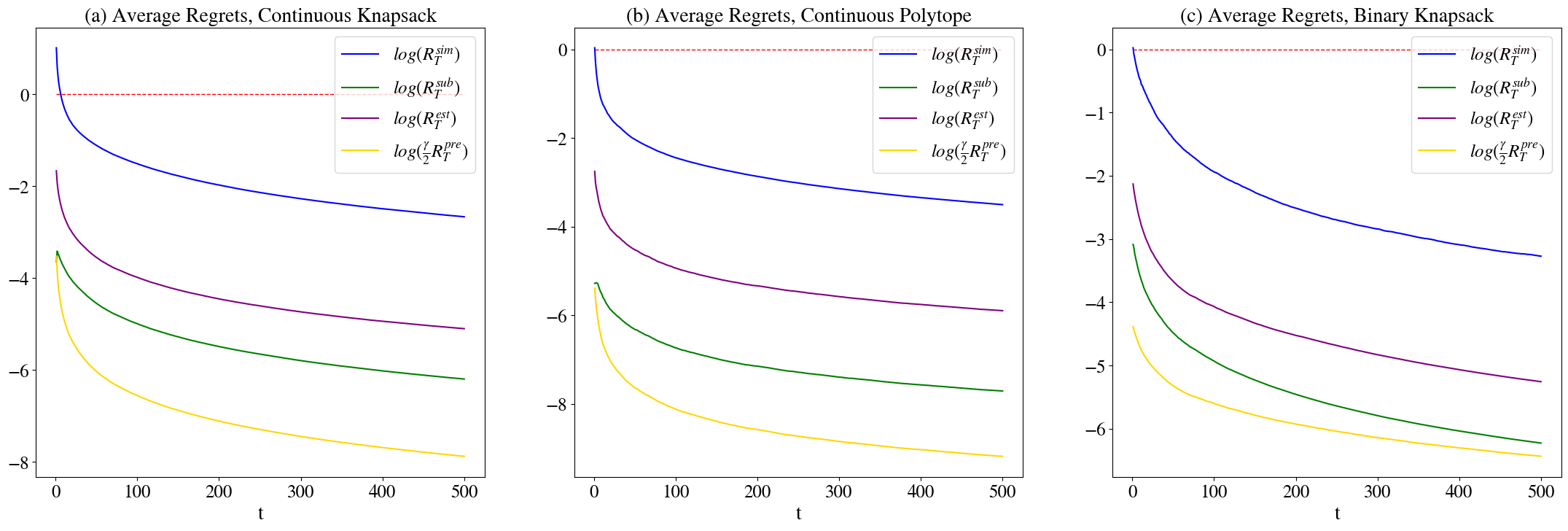}
  \caption{Means of average regret (on a logarithmic scale)  with respect to different loss functions over $T=500$ iterations for (a) continuous knapsack instances, (b) continuous polytope instances, (c) binary knapsack instances, when OL with first-order oracle is used.}
  \label{fig:losscompare-log}
\end{figure}

\subsubsection{Learning a CES Utility Function}\label{sec:app-perces}
Here, we examine the case when the agent's utility function is of form \eqref{ex:prefces} and summarize our findings on the average regrets in Figure \ref{fig:ces-loss}. 
We note that the OL with the first-order oracle has a quite noticeable advantage over the implicit OL with a solution oracle in terms of the regret convergence. In this case, on a typical instance, OL with the first-order oracle takes $0.12$ seconds to complete and $\ell^{sim}$-based implicit OL with the solution oracle takes $2.02$ seconds. 

\begin{figure}
\centering
    \includegraphics[width=0.9\linewidth]{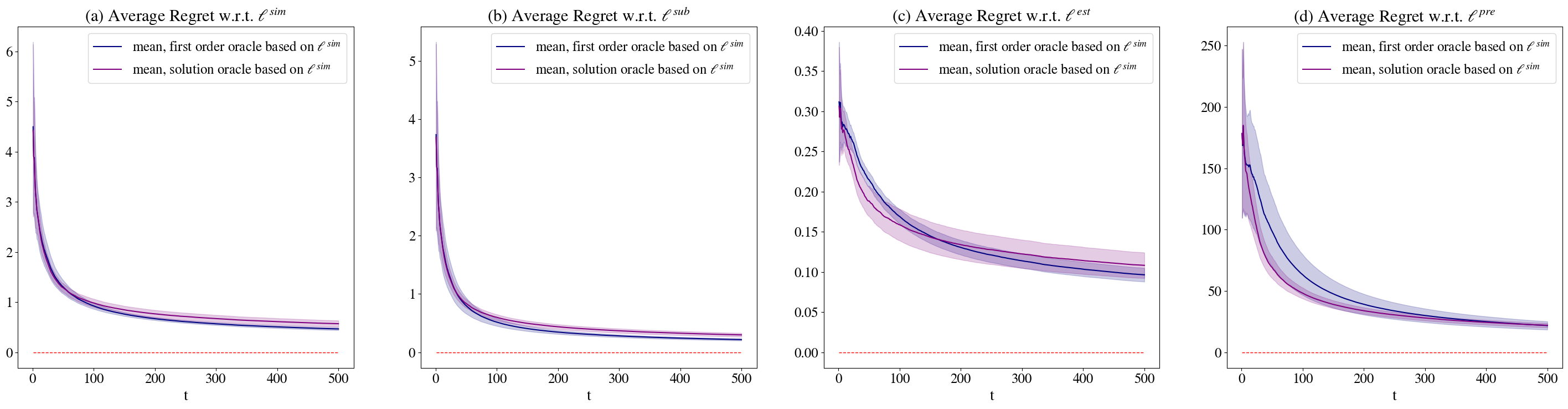}
     \caption{ Means of average regrets with respect to different loss functions over $T=500$ iterations for equality constrained knapsack instances; the shaded region is 95\% confidence interval for the means.}
     \label{fig:ces-loss}
\end{figure}

\subsection{Imperfect Information Experiments} \label{app:imperexp}
We next study the robustness of these OL algorithms when the observations are corrupted with random noise. We test this imperfect information setup on two types of instances where (1) the agent is maximizing a concave quadratic utility function on a continuous knapsack domain, and (2) the agent is maximizing a CES utility function over an equality constrained knapsack domain. We observed that the impact of the noises on the solution time of the OL algorithms was negligible in both of these instance types.

Before discussing the performance and robustness of OL algorithms, we note some key differences with the perfect information case. Because the noisy observations $\{{y}_t\}$ may be suboptimal or even infeasible to the agent's problems, we no longer have the convenient guarantee of Lemma~\ref{lem:minloss} that $\theta_{true}$ gives the offline optimal losses $\min_{\theta \in \Theta} \sum_{t \in [T]} \ell_t(\theta) = 0$  with respect to $\ell^{sub},\ell^{est}$ and $\ell^{pre}$. Specifically, $\ell^{est}_t(\theta) = f({x}({\theta};{u}_t);{\theta}_{true},{u}_t) - f({y}_t;{\theta}_{true},{u}_t) < 0$ is possible when ${y}_t$ is feasible and ${x}({\theta};{u}_t)$ is a better solution than ${y}_t$ for the agent; $\ell^{sub}_t(\theta) = f({y}_t;{\theta},{u}_t) - f({x}({\theta};{u}_t);{\theta},{u}_t) < 0$ can happen when ${y}_t$ is outside the agent's feasible domain $\mathcal{X}({u}_t)$. 
The prediction loss $\ell^{pre}_t(\theta)$ is still nonnegative, but the lowest value is not necessarily $\ell^{pre}_t(\theta_{true})$. Therefore, when the observations $y_t$ are noisy, computing regret $R_T$ requires solving the optimization problem $\min_{\theta \in \Theta} \sum_{t \in [T]} \ell_t(\theta)$, which can be computationally difficult as the term ${x}(\theta;{u}_t)$ makes it a bilevel program. To avoid such difficulty, we plot the outcomes differently: instead of showing average regrets with respect to all of our loss functions, we show only the average regret with respect to $\ell^{sim}$, but we also report the average losses computed with replacing ${y}_t$ by ${x}(\theta_{true};{u}_t)$ in the loss definitions.

\subsubsection{Learning a Quadratic Utility Function} \label{sec:app-imperqua}
We report the results for when the agent's problem has the form~\eqref{ex:prefinmkt} with the domain  $\mathcal{X}(u_t)=\mathcal{X}^{ck}(u_t)$ in Figures \ref{fig:no-knap-50-50}, \ref{fig:no-knap-50-50-per}, \ref{fig:bigno-knap-50-50}, and \ref{fig:bigno-knap-50-50-per}. 
In the imperfect information setup, while only the $\ell^{pre}$ minimization-based implicit OL with the solution oracle has a theoretical guarantee for convergence under further assumptions on $\mathcal{X}(u_t)$, we observe a convergence behavior for both $\ell^{sim}$-based OL algorithms as well. 
For example, in the case of small noises, Figure \ref{fig:no-knap-50-50}-(e) shows that the average regret with respect to $\ell^{sim}$ has a quite similar convergence trend as in Figure \ref{fig:50-n50K-reg}, and Figure \ref{fig:no-knap-50-50-per} demonstrates that all OL algorithms also lead to effective convergence of the average losses measured relative to the true actions. Figures \ref{fig:bigno-knap-50-50}, \ref{fig:bigno-knap-50-50-per} show that the effects of the noises become more noticeable when their magnitudes are larger as the average regret with respect to $\ell^{sim}$ appears to converge much more slowly  for the $\ell^{sim}$-based OL algorithms and the $\ell^{pre}$-based implicit OL with the solution oracle seems to fail to converge, and there are cases of negative average losses with respect to a number of loss functions. It is notable that in the case of $\ell^{sim}$-based OL algorithms, the average losses in terms of $\ell^{pre}$ computed with respect to ${x}(\theta_{true};{u}_t)$ decrease with $T$ even under large noises, which indicates that these OL algorithms' predictions of the agent's actions are becoming more accurate as $T$ increases. We further note that such trends are most explicit in the $\ell^{sim}$-based OL with the first-order oracle. 
These results demonstrate that  $\ell^{sim}$-based OL algorithms have some degree of robustness for certain types of imperfect information, and  the performance of the $\ell^{sim}$-based OL algorithm with first-order feedback seems to be slightly superior in the noisy setup. 

\begin{figure}
\centering
\begin{subfigure}{\textwidth}
\centering
\includegraphics[width=0.7\linewidth]{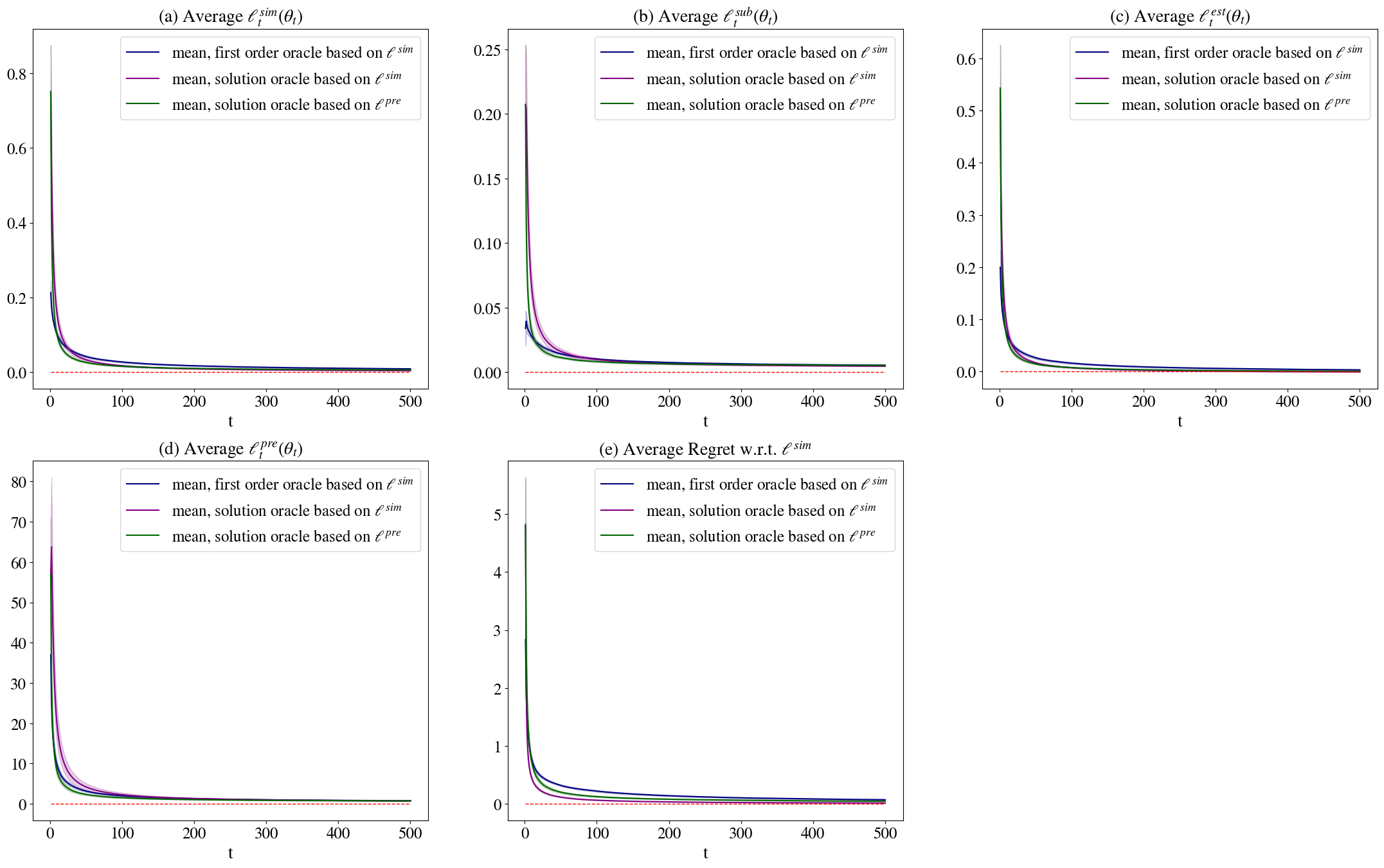}
  \caption{Means of average losses with respect to different loss functions and means of average regret with respect to $\ell^{sim}$.}
  \label{fig:no-knap-50-50}
\end{subfigure}
\begin{subfigure}{0.9\textwidth}
\centering
  \includegraphics[width=0.9\linewidth]{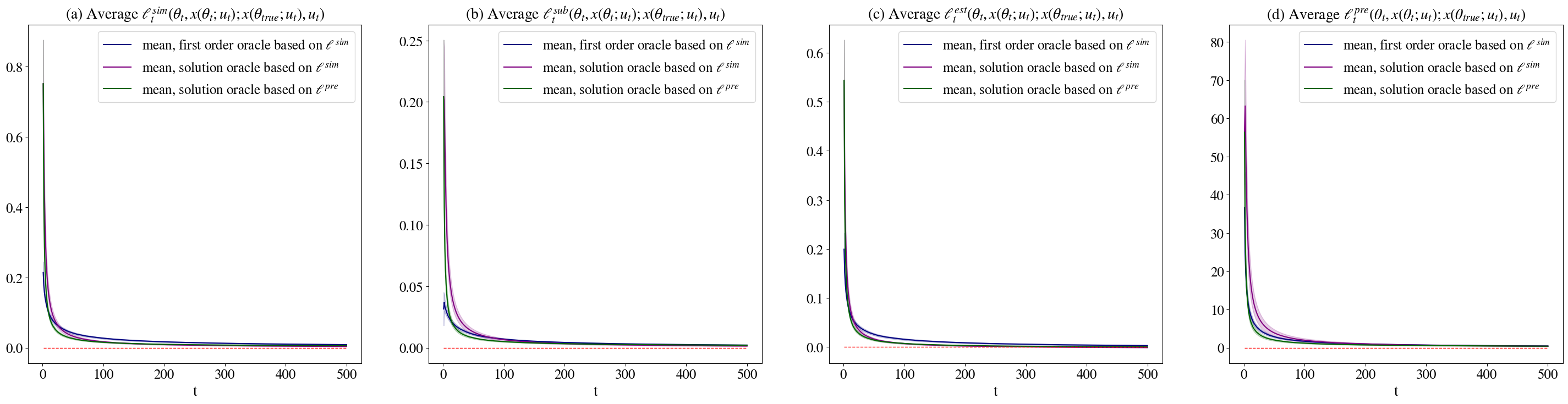}
  \caption{Means of average losses with respect to different loss functions measured at ${x}(\theta_{true};{u}_t)$.}
  \label{fig:no-knap-50-50-per}
\end{subfigure}
\caption{Learning a quadratic utility function under small noises: means of selected performance measures over $T=500$ iterations for continuous knapsack instances; the shaded region is 95\% confidence interval for the means.}
\label{fig-noqua-1}
\end{figure}

\begin{figure}
\centering
\begin{subfigure}{\textwidth}
\centering
\includegraphics[width=0.7\linewidth]{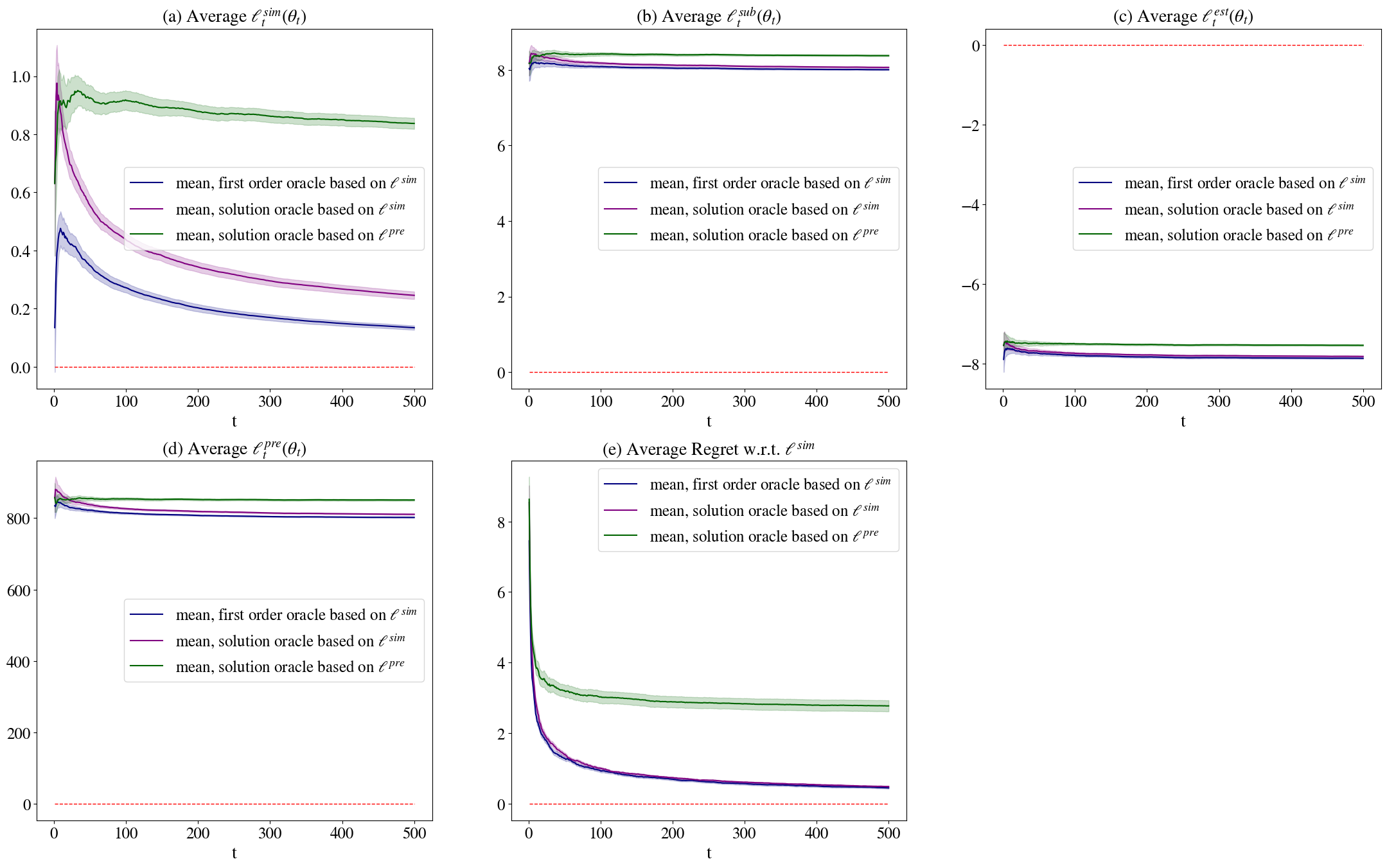}
  \caption{Means of average losses with respect to different loss functions and means of average regret with respect to $\ell^{sim}$.}
  \label{fig:bigno-knap-50-50}
\end{subfigure}
\begin{subfigure}{0.9\textwidth}
\centering
  \includegraphics[width=0.9\linewidth]{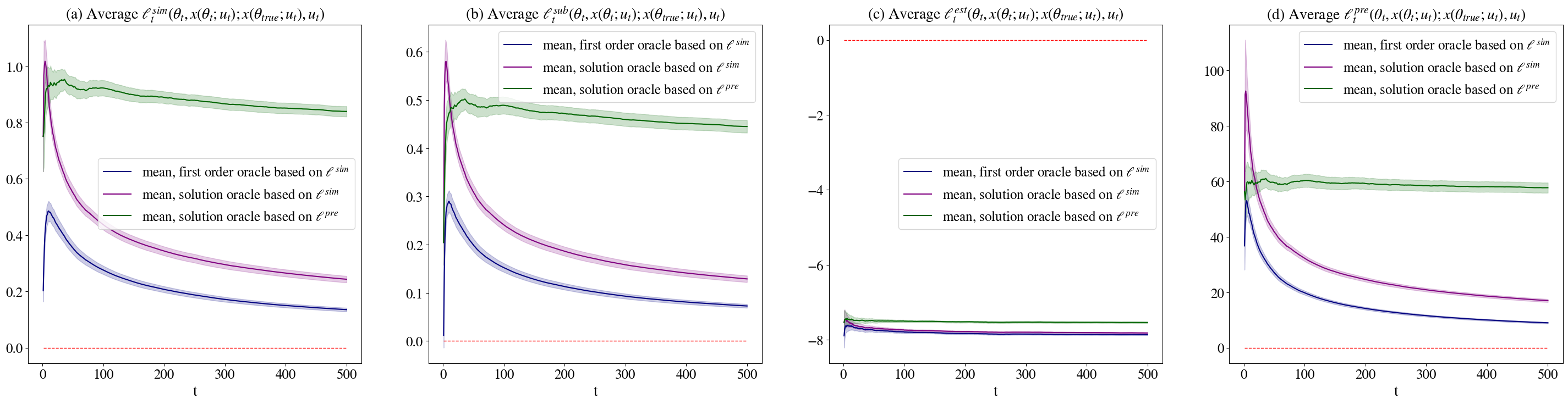}
  \caption{Means of average losses with respect to different loss functions measured at ${x}(\theta_{true};{u}_t)$.}
  \label{fig:bigno-knap-50-50-per}
\end{subfigure}
\caption{Learning a quadratic utility function under large noises: means of selected performance measures over $T=500$ iterations for continuous knapsack instances; the shaded region is 95\% confidence interval for the means.}
\label{fig-noqua-2}
\end{figure}

\subsubsection{Learning a CES Utility Function} \label{sec:app-imperces}
We now provide details on imperfect information experiments in the CES setup, i.e., when the agent's problem has the form~\eqref{ex:prefces} with the equality constrained knapsack domain, i.e.,  $\mathcal{X}(u_t)=\mathcal{X}^{eck}(u_t)$, under small noises in Figures \ref{fig:ces-loss-no}, \ref{fig:ces-loss-no-pm}, and under large noises in Figures \ref{fig:ces-loss-bigno}, \ref{fig:ces-loss-bigno-pm}. Our findings are not fully in line with our observations from learning a quadratic utility function.
The OL algorithm utilizing the first-order oracle is again robust to the small noises, and generate average losses converging in roughly the same patterns as their counterparts in the perfect information case. The OL algorithm with $\ell^{sim}$ based solution oracle, on the other hand, is much less robust to small noises as shown in Figure \ref{fig:ces-loss-no}. As the noises get larger, not surprisingly, the performance of both algorithms degrade. In addition, we note that regardless of the noise magnitude, the solution oracle based OL algorithm has much wider confidence intervals than the first order algorithm. 

\begin{figure}
\centering
\begin{subfigure}{\textwidth}
\centering
  \includegraphics[width=0.7\linewidth]{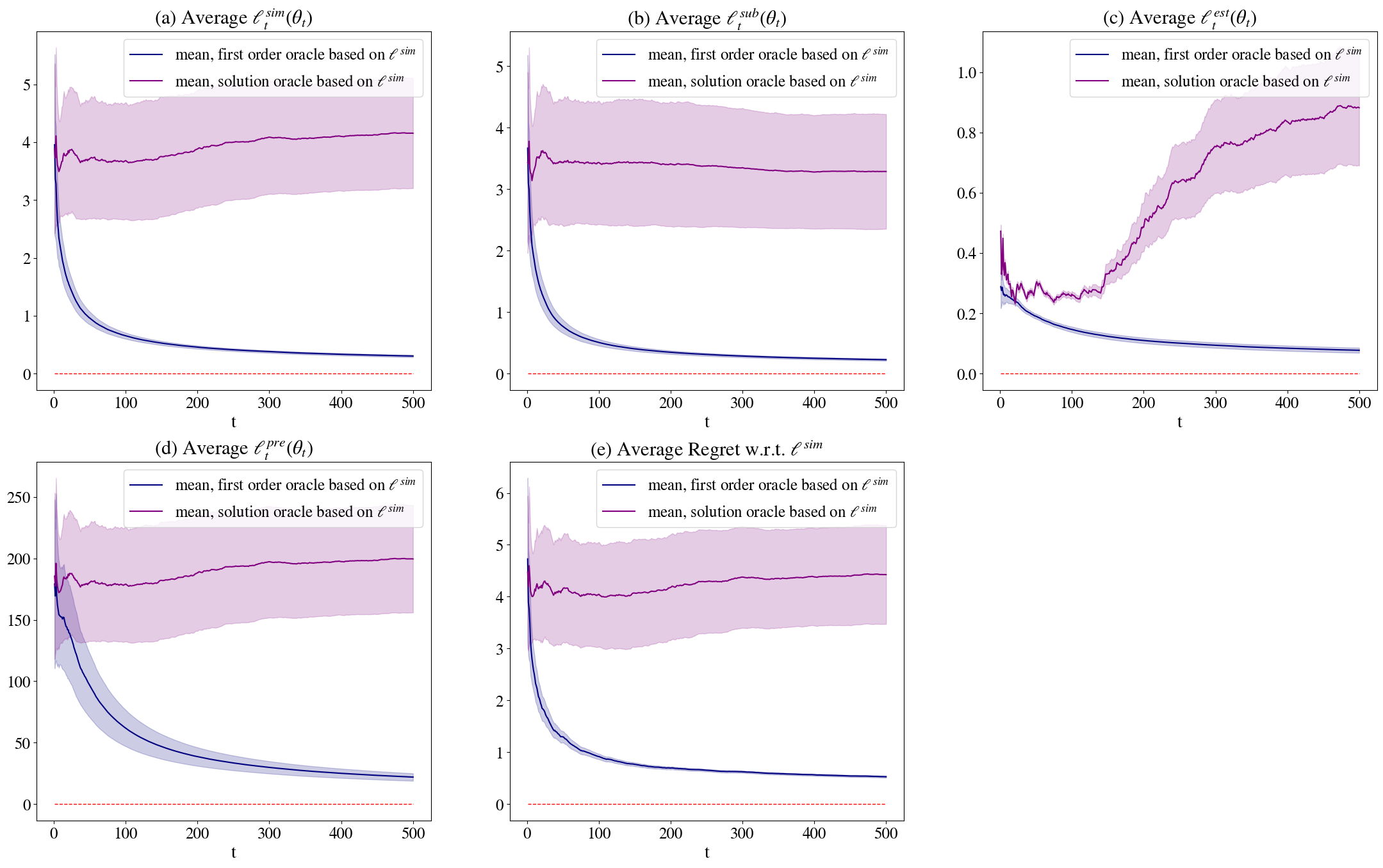}
  \caption{Means of average losses with respect to different loss functions and means of average regret with respect to $\ell^{sim}$.}
  \label{fig:ces-loss-no}
\end{subfigure}

\begin{subfigure}{0.9\textwidth}
\centering
  \includegraphics[width=0.9\linewidth]{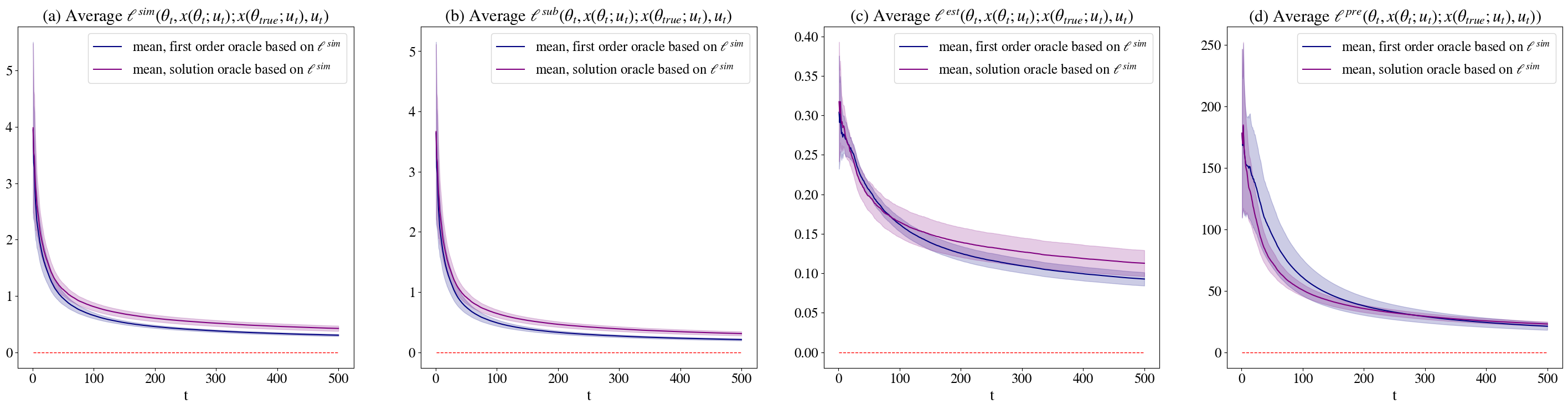}
  \caption{Means of average losses with respect to different loss functions measured at ${x}(\theta_{true};{u}_t)$.}
  \label{fig:ces-loss-no-pm}
\end{subfigure}
\caption{Learning a CES utility function under small noises: means of selected performance measures over $T=500$ iterations for equality constrained knapsack instances with $n=50$; the shaded region is 95\% confidence interval for the means.}
\label{fig:no-ces-1}
\end{figure}

\begin{figure}
\centering
\begin{subfigure}{\textwidth}
\centering
  \includegraphics[width=0.7\linewidth]{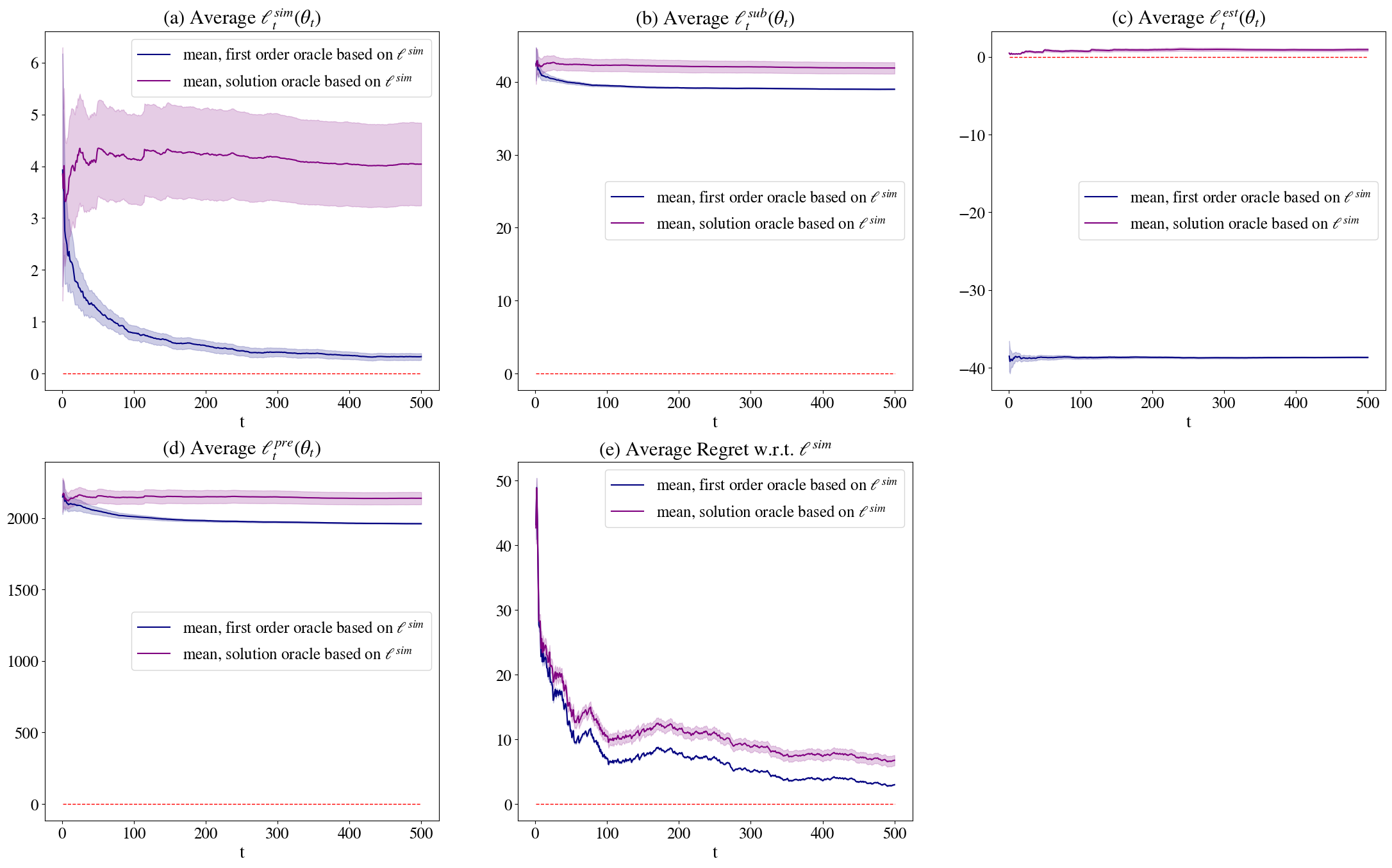}
  \caption{Means of average losses with respect to different loss functions and means of average regret with respect to $\ell^{sim}$.}
  \label{fig:ces-loss-bigno}
\end{subfigure}

\begin{subfigure}{0.9\textwidth}
\centering
  \includegraphics[width=0.9\linewidth]{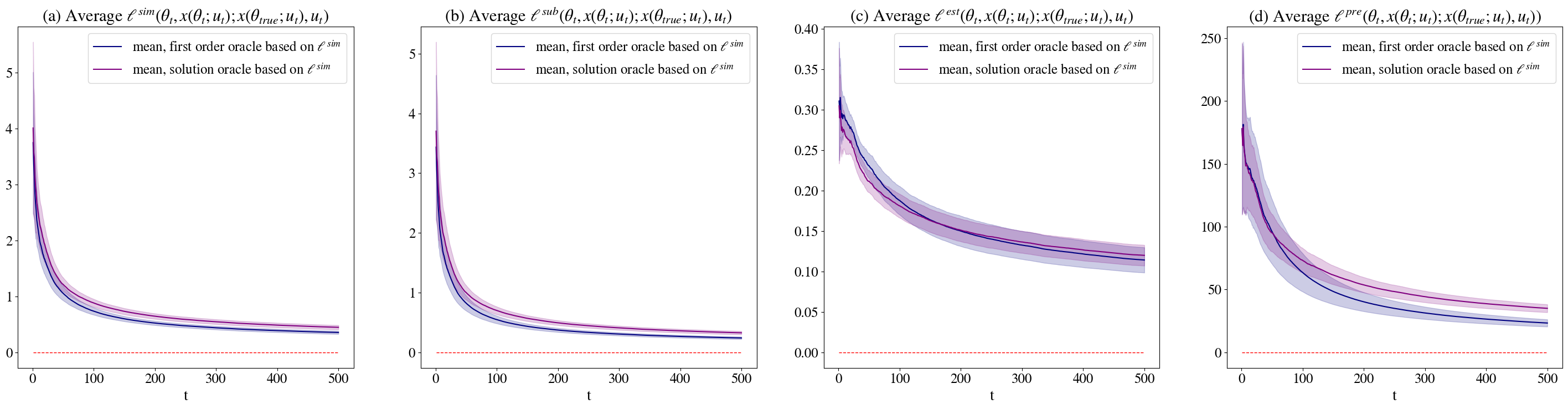}
  \caption{Means of average losses with respect to different loss functions measured at ${x}(\theta_{true};{u}_t)$.}
  \label{fig:ces-loss-bigno-pm}
\end{subfigure}
\caption{Learning a CES utility function under large noises: means of selected performance measures over $T=500$ iterations for equality constrained knapsack instances with $n=50$; the shaded region is 95\% confidence interval for the means.}
\label{fig:no-ces-2}
\end{figure}

%%%%%%%%%%%%%%%%%%%%%%%%%%%%%%%%%%%%%%%%%%%%%%%%%%%%%%%%%%%%

% Bibliography
\newpage
\bibliographystyle{ACM-Reference-Format}
\bibliography{references}

\newpage
% Appendix
\appendix
\section{CES and Cobb-Douglas Utility Functions} \label{app:utilfunc}
We illustrate how a CES function and a Cobb-Douglas function can be transformed to satisfy our Assumption \ref{assp:fstr}. This then confirms that our online inverse optimization framework is able to handle both types of utility functions.

\subsection{CES Function}
For ${x} \in \R^n_+$, the function $U({x}) \coloneqq (\sum_{i=1}^n a_i x_i^{\rho})^{1/\rho}$ for some $-\infty < \rho < 0$ or $0 < \rho < \infty$ and ${a} \in \R_+^n$ such that $\sum_{i=1}^n a_i = 1$ is referred to as a CES function. An economic interpretation of CES functions is provided in \cite{balcan2014learning}: ${x}$ represents an outcome where the agent receives amount $x_i$ of good $i$, and the utility $U({x})$ captures both substituteness and complementarity of the $n$ goods. Here, for consistency of notation, we replace ${a}$ with $\theta_{true}$. 

$U({x})$ is a concave function of ${x}\in \mathcal{X}\subseteq\R^n_+$ whenever $\rho\in(-\infty,0)\cup(0,1]$, and the agent's forward problem maximizes $U({x})$:
\[
\max_{{x}} \left\{ \left( \sum_{i=1}^n (\theta_{true})_i x_i^{\rho} \right)^{1/\rho}:~ g({x}; {u}) \leq 0,~ {x} \in \mathcal{X} \right\}.
\]

Equivalently, the agent's optimal solution ${x}(\theta_{true}; {u})$ can be obtained with the following systems.
\[
{x}(\theta_{true}; {u}) = \left\{ 
\begin{array}{ll}
\argmin_{{x}} \left\{  \sum_{i=1}^n (\theta_{true})_i x_i^{\rho}:~ g({x}; {u}) \leq 0,~ {x} \in \mathcal{X} \right\},
& -\infty < \rho < 0, \\
\argmin_{{x}} \left\{  -\sum_{i=1}^n (\theta_{true})_i x_i^{\rho}:~ g({x}; {u}) \leq 0,~ {x} \in \mathcal{X} \right\},
& 0 < \rho \leq 1.
\end{array}
\right.
\]

$U({x})$ is a convex function of ${x}\in \mathcal{X}\subseteq\R^n_+$ whenever $\rho\in(1,\infty)$, and the agent's forward problem minimizes $U({x})$:
\begin{align*}
\min_{{x}} & \left\{ \left( \sum_{i=1}^n (\theta_{true})_i x_i^{\rho} \right)^{1/\rho}:~ g({x}; {u}) \leq 0,~ {x} \in \mathcal{X} \right\}, \\
\text{and thus }~ 
{x}(\theta_{true}; {u}) &= %
\begin{array}{ll}
\argmin_{{x}} \left\{  \sum_{i=1}^n (\theta_{true})_i x_i^{\rho}:~ g({x}; {u}) \leq 0,~ {x} \in \mathcal{X} \right\},
& 1 < \rho < \infty.
\end{array} 
\end{align*}
Note that these alternative representations of agent's objective function satisfy Assumption \ref{assp:fstr}.

\subsection{Cobb-Douglas Function}
For ${x} \in \R^n_+$, the function $U({x}) = \Pi_{i = 1}^n x_i^{a_i}$, where $a_i > 0$ and $\sum_{i=1}^n a_i \leq 1$, is referred to as a Cobb-Douglas function; see \cite{roth2016watch}. This utility function can be derived from the CES function by taking $\rho \rightarrow 0$.
We replace ${a}$ with $\theta_{true}$ for consistency, then an agent with the given Cobb-Douglas utility function chooses her/his optimal action ${x}(\theta_{true};{u})$ as:
\[
{x}(\theta_{true};{u}) = \argmax_{{x}} \left\{ \sum_{i=1}^n (\theta_{true})_i \log x_i:~  g({x}; {u}) \leq 0,~ {x} \in \mathcal{X} \right\}.
\]
We obtain this reformulation by taking logarithm of the product form objective. We immediately observe that Assumption \ref{assp:fstr} holds for this transformed representation.

\section{Information Obscuring Agent Objective Example} \label{app:cstruc}
We give a simple 1-dimensional example of an agent's utility function that obscures information due to its particular choice of $c(x)$. Suppose $\theta_{true} = 0$ and $\Theta = [-3,3]$, an agent's forward problem is $\min_x \{x + \theta c(x): x \in [-1,1]\}$ and let $\mathcal{X}(\theta)$ denote the set of optimal solutions to the agent's problem. With a given $\theta$, the predicted agent action is denoted by $x(\theta) \in \mathcal{X}(\theta)$. 

We define 
\[c(x) \coloneqq \begin{cases} 0 & \text{ if } x \neq 0 \\ -1 & \text{ if } x = 0 \end{cases}.\] 
This particular function obscures information on $x$. The agent's objective function simplifies to $x$ when $x \neq 0$ and $x - 2\theta$ when $x = 0$. Since $\theta_{true} = 0$, it is clear that $x(\theta_{true}) = -1$ is the minimizer. For $\theta \neq 0$, the agent's problem is given by $\min\left\{\min_x\{x:~ x\in[-1,1], x\neq 0\},~ 0+\theta*(-1) \right\} =\min \{ -1, -\theta\}$. Then, we deduce that the agent's optimal solution will satisfy the following:
\begin{itemize}
\item When $\theta<0$, agent's optimal solution is $x(\theta) = -1$;
\item When $\theta>0$, $x(\theta) = -1$ if $\theta < 1 $, and $x(\theta) = 0$ if $\theta \geq 1$ (note that in the case of alternative optima, we assume that the solver will break ties by selecting the solution with smaller norm);
\item When $\theta=0$, $x(\theta) = -1$.
\end{itemize}
To summarize, in the given domain $\Theta$, when $\theta \in [-3,1)$, $x(\theta) = -1$ and $c(x(\theta)) = 0$; when $\theta \in [1,3]$, $x(\theta) = 0$ and $c(x(\theta)) = -1$.

We next show that implicit OL based on $\ell^{pre}$ with a solution oracle may lead to an unbounded regret. Suppose we choose $\eta_t = \frac{1}{t}$ for all $t$. Then, at time step $t$, based on the implicit OL based on $\ell^{pre}$, we update $\theta_{t+1}$ by solving the following optimization problem: in this example, $\ell_t^{pre}(\theta) = \norm{x(\theta_{true}) - x(\theta)}^2=(-1-x(\theta))^2$, hence
\[\theta_{t+1} = \argmin_{\theta \in [-3,3]} \frac{1}{2}(\theta-\theta_t)^2 + \frac{1}{t} (-1-x(\theta))^2 .
\]

If we initialize $\theta_1 = 3$, then the above update will generate $\theta_2 = \argmin_{\theta \in [-3,3]} \frac{1}{2}(\theta-3)^2 + \frac{1}{t}(-1 - x(\theta))^2$. To decide the optimal solution, we need to compare three scenarios: when $\theta = \theta_1$, the objective value is $0 + \frac{1}{t}(-1 - x(\theta_1))^2 = \frac{1}{t}(-1 -0)^2 = \frac{1}{t}$; when $\theta < 1$, the objective value is $\frac{1}{2}(\theta - \theta_1)^2 + \frac{1}{t}(-1 - x(\theta))^2  = \frac{1}{2}(\theta - 3)^2 + 0 \geq \frac{1}{2}(1 - 3)^2 = 2 > \frac{1}{t}$ (where we used $x(\theta)=-1$ for $\theta\leq 1$); when $1 \leq \theta < \theta_1$, the objective value is $\frac{1}{2}(\theta - \theta_1)^2 + \frac{1}{t}(-1 - x(\theta))^2  = \frac{1}{2}(\theta - 3)^2 + \frac{1}{t}(-1-0)^2 > \frac{1}{t}$.
Therefore, we have $\theta_2 = \theta_1$, and by the same derivation, later iterations will always stay at the same estimate $\theta_t = \theta_1$. This means the implicit OL algorithm will generate $\theta_t = 3$ for all $t$, and each iteration the learner incurs prediction loss as $\ell_t^{pre}(\theta_t) = \norm{x(\theta_{true})-x(\theta_t)}^2 = \norm{x(0)-x(3)}^2 = 1$. Therefore, the associated regret with respect to $\ell^{pre}$ is unbounded as $T \rightarrow \infty$: 
\[
R_T(\{\ell^{pre}_t \}_{t \in [T]}, \{\theta_t\}_{t \in [T]}) = \sum_{t \in [T]} \ell_t^{pre}(\theta_t) -\sum_{t \in [T]} \ell_t^{pre}(\theta_{true}) = T.
\]

We note that this example does not invalidate the regret convergence in Theorem \ref{thm:IOLRegretBdExplicit}. With the contrived definition of $c(x)$, the loss function $\ell^{pre}$ does not satisfy the Lipschitz continuity assumption needed for regret convergence guarantees given in Theorem \ref{thm:IOLRegretBdExplicit}. To be more specific, $\ell^{pre}_t(\theta) = (x(\theta_{true}) - x(\theta))^2$: consider $\epsilon >0$, $\theta_1 = 1,~\theta_2 = 1 + \epsilon > 1$, we conclude $\ell^{pre}_t(\theta_1) = (-1-(-1))^2 = 0$ and $\ell_t^{pre}(\theta_2) = (-1-0)^2 = 1$. As $\epsilon \rightarrow 0$, there is no finite $G$ as a valid Lipschitz constant for $\ell_t^{pre}$.

We also examine the use of online Mirror Descent (MD) based on $\ell^{sim}$ in the same setup. Let Euclidean distance be the distance generating function in Bregman distance, then online MD simplies to projected gradient descent. We again choose $\eta_t = \frac{1}{t}$ for all $t$, then at time step $t$ we update $\theta_{t+1}$ via 
\[
\theta_{t+1} = \text{proj}_{[-3,3]} \left[\theta_t - \frac{1}{t} \big(c(x(\theta_{true})) - c(x(\theta_t))\big)\right].
\]
Suppose we initialize $\theta_1 = 3$, then $\theta_2 = \text{proj}_{[-3,3]} [\theta_1 - \frac{1}{t}(0 - (-1))] = \theta_1 - \frac{1}{t} = 2$. Following similar derivations, we will update $\theta_t$ in the later iterations as:
\begin{gather*}
    \begin{aligned}
    & \theta_3 = \text{proj}_{[-3,3]} [\theta_2 - \frac{1}{t}(0 - (-1))] = 2 - \frac{1}{2} = \frac{3}{2} \\
    & \theta_4 = \text{proj}_{[-3,3]} [\theta_3 - \frac{1}{t}(0 - (-1))] = \frac{3}{2} - \frac{1}{3} = \frac{7}{6} \\
    & \theta_5 = \text{proj}_{[-3,3]} [\theta_4 - \frac{1}{t}(0 - (-1))] = \frac{7}{6} - \frac{1}{4} = \frac{11}{12} ~ \text{(Note: $c(x(\theta_5)) = 0$)}\\
    & \theta_6 = \text{proj}_{[-3,3]} [\theta_5 - \frac{1}{t}(0 - 0)] = \frac{11}{12}. 
    \end{aligned}
\end{gather*}
It is clear that all later iterations will stay at the same estimate $\theta_t = \frac{11}{12}$ for $t \geq 5$. By definition, $\ell^{sim}_t(\theta_t) = (\theta_t - \theta_{true})\left[c(x(\theta_{true})) - c(x(\theta_t))\right] = \theta_t (0 - (-1)) = \theta_t$ for $t = 1, \ldots, 4$, and $\ell^{sim}_t(\theta_t) = \theta_t (0 - 0) = 0$ for $t \geq 5$. Therefore, the regret with respect to $\ell^{sim}$ becomes 
\[R_T(\{\ell^{sim}_t \}_{t \in [T]}, \{\theta_t\}_{t \in [T]}) = 3 + 2 + 3/2 + 7/6.\]
As $T$ increases, then the average regret converges to $0$. Contrary to the $\ell^{pre}$ based implicit OL algorithm, online MD based on $\ell^{sim}$ leads to converging regret with respect to the loss function of choice. In addition, we point out that this does not violate the bounding relationship between regrets based on $\ell^{sim}$ and $\ell^{pre}$ in Corollary \ref{cor:strconv}, because the agent's objective function in this simple example is not strongly convex. 

\section{Convexity Status of $\ell^{pre}$ and $\ell^{est}$} \label{app: convexity}
In this appendix, we examine the convexity status $\ell^{pre}$ and $ \ell^{est}$ under our Assumption~\ref{assp:fstr}. Recall that we already establish in  Lemma~\ref{lem:l-sub-characterization} that under Assumption~\ref{assp:fstr} $\ell^{sub}(\theta)$ is convex in $\theta$. 
On the other hand, $\ell^{pre}$ and $\ell^{est}$ are not guaranteed to be convex in $\theta$ even under Assumption~\ref{assp:fstr} and even when agent's problem is a one dimensional optimization problem.
\begin{example}
Suppose $\Theta = [-1,1]$ and $\theta_{true} = 1$, and an agent's forward problem is $\min_x \{ \theta x: x \in \mathcal{X} \}$. We consider a convex domain $\mathcal{X} = [-1, 1]$. Let $x(\theta)\coloneqq \argmin_x \{ \theta x: x \in \mathcal{X} \}$, i.e., the set of optimizers of the agent's problem for given $\theta$. Then, we easily deduce that the agent's optimal action(s) at a given $\theta$ are: $x(\theta) = -1$ if $\theta > 0$, $x(\theta) \in \mathcal{X}$ if $\theta=0$, and $x(\theta) = 1$ if $\theta < 0$. Specifically, this implies $x(\theta_{true}) = -1$.  

Consider $\theta_1 = 1$, $\theta_2 = -1$ and $\lambda = \frac{1}{4}$, then $\tilde{\theta} = \lambda\theta_1 + (1-\lambda)\theta_2 =  -\frac{1}{2}$. By the format of $x(\theta)$ in this problem and the definition of $\ell^{est}$, we observe that 
\begin{align*}
    &\ell^{est} (\theta_1) = \theta_{true} (x(1) - x(\theta_{true})) = 0,\\
    &\ell^{est}(\theta_2) = \theta_{true} (x(-1) - x(\theta_{true})) = 2,\\
    &\ell^{est}(\tilde{\theta}) = \theta_{true} (x(-1/2) - x(\theta_{true})) = 2. 
\end{align*}
Therefore, we deduce $\ell^{est}(\tilde{\theta}) > \lambda \ell^{est} (\theta_1) + (1-\lambda) \ell^{est} (\theta_2)$ that shows that $\ell^{est}$ is not a convex function of $\theta$. 
Similarly, in the case of $\ell^{pre}$, we arrive at
\begin{align*}
    & \ell^{pre} (\theta_1) = (x(1) - x(\theta_{true}))^2 = 0, \\
    &\ell^{pre}(\theta_2) = (x(-1) - x(\theta_{true}))^2 = 4, \\
    &\ell^{pre}(\tilde{\theta}) = (x(-1/2) - x(\theta_{true}))^2 = 4.
\end{align*}
Similarly, we arrive at $\ell^{pre}(\tilde{\theta}) > \lambda \ell^{pre} (\theta_1) + (1-\lambda) \ell^{pre} (\theta_2)$ and hence conclude $\ell^{pre}$ is not convex.
\end{example}
Note that the nonconvexity of $\ell^{est}$ and $\ell^{pre}$ established in this example remains the same even if we switch to an integer domain of $\mathcal{X} = \{-1, 1\}$.

\section{Proofs} \label{sec:proof}

\subsection{Proof of Lemma~\ref{lem:l-sub-characterization}} \label{app:lsubproof}
By definition of $\ell^{sub}(\theta)$, we have
\begin{align*}
    & \ell^{sub}(\theta, {x}(\theta;{u}_t); {y}_t, {u}_t)
    = f({y}_t; \theta, {u}_t) - f({x}(\theta;{u}_t); \theta, {u}_t) \\
    &= f_1({y}_t;{u}_t) + f_2(\theta;{u}_t) + \langle \theta, c({y}_t) \rangle
    -f_2(\theta;{u}_t) - \min_x\{f_1({x};{u}_t) + \langle \theta, c({x}) \rangle:~ g(x;u_t)\leq 0\}\\
    &= f_1({y}_t;{u}_t) + \langle \theta, c({y}_t) \rangle - \min_x\{f_1({x};{u}_t) + \langle \theta, c({x}) \rangle:~ g(x;u_t)\leq 0\} \\
    &= f_1({y}_t;{u}_t) + \max_x\{\langle \theta, c({y}_t)-c({x}) \rangle-f_1({x};{u}_t):~ g(x;u_t)\leq 0\}.
\end{align*}
Here, the second equation follows from Assumption~\ref{assp:fstr} and the remaining equations are simply cancellation and re-arrangements of the terms. 
Thus, under Assumption~\ref{assp:fstr}, $\ell^{sub}(\theta)$ is a point-wise maximum of affine functions of $\theta$ and hence it is a convex function of $\theta$. \qed

\begin{comment}
{\cbl
[Since we will not state anything about SP algorithms right now, we may as well delete this.]

From this reformulation of $\ell^{sub}$, we further conclude that the inverse problem \eqref{sys:inverse} based on $\ell^{sub}$ becomes a Saddle-Point problem:
\[
\min_{\theta \in \Theta}  \max_{x \in \mathcal{X}, g(x; u_t) \leq 0 } \langle \theta, c({y}_t)-c({x}) \rangle-f_1({x};{u}_t).
\]
}
\end{comment}

\subsection{Proof of Lemma \ref{lem:l-sim_convex}}
When Assumption \ref{assp:fstr} holds, based on the given form of $f$, $\ell_t^{sim}(\theta)$ simplifies to a function linear in $\theta$, hence is convex with respect to $\theta$. 
\qed

\subsection{Proof of Proposition \ref{prop:pi}}
We break down the proof of Proposition \ref{prop:pi} into several intermediate results.

We first observe some important properties of $\ell^{sim}_t$ and its connection with $\ell^{sub}_t, \ell^{est}_t$. These properties play a key role in the development of our regret guarantees.
\begin{lemma} \label{lem:simloss}
Suppose Assumption \ref{assp:fstr} holds. Then, for $t \in [T]$ and any signal ${u}_t$, we have
\begin{enumerate}
    \item[(a)] $\ell^{sim}_t(\theta_{true}) = 0$;
    \item[(b)] $\ell^{sim}_t(\theta) + \la \theta - \theta_{true},  c({x}(\theta_t;{u}_t)) -  c({x}(\theta;{u}_t)) \ra = \ell^{sub}_t(\theta) + \ell^{est}_t(\theta) $ for all $\theta$;
    \item[(c)] $\ell^{sim}_t(\theta_t) = \ell^{sub}_t(\theta_t) + \ell^{est}_t(\theta_t)$ for all $t$. 
\end{enumerate}
\end{lemma}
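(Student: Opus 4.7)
The plan is to prove all three parts by direct algebraic manipulation, using the decomposable structure of $f$ from Assumption~\ref{assp:fstr} as the sole ingredient. Part (a) is immediate: substituting $\theta = \theta_{true}$ into the definition of $\ell^{sim}_t$ (Definition~\ref{def:lsim}) collapses the two inner-product terms into a single $\la\theta_{true}, c(y_t)-c(x(\theta_t;u_t))+c(x(\theta_t;u_t))-c(y_t)\ra=0$. So no work is needed beyond citing the definition.

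For part (b), I would start from the right-hand side. Using Assumption~\ref{assp:fstr}, I expand each of the four $f$-values appearing in $\ell^{sub}_t(\theta)+\ell^{est}_t(\theta)$:
\begin{align*}
f(y_t;\theta,u_t)&=f_1(y_t;u_t)+f_2(\theta;u_t)+\la\theta,c(y_t)\ra,\\
f(x(\theta;u_t);\theta,u_t)&=f_1(x(\theta;u_t);u_t)+f_2(\theta;u_t)+\la\theta,c(x(\theta;u_t))\ra,\\
f(x(\theta;u_t);\theta_{true},u_t)&=f_1(x(\theta;u_t);u_t)+f_2(\theta_{true};u_t)+\la\theta_{true},c(x(\theta;u_t))\ra,\\
f(y_t;\theta_{true},u_t)&=f_1(y_t;u_t)+f_2(\theta_{true};u_t)+\la\theta_{true},c(y_t)\ra.
\end{align*}
Forming $\ell^{sub}_t(\theta)+\ell^{est}_t(\theta)$, all $f_1$- and $f_2$-terms cancel pairwise and one is left with $\la\theta,c(y_t)-c(x(\theta;u_t))\ra+\la\theta_{true},c(x(\theta;u_t))-c(y_t)\ra$, which is exactly $\ell^{sim}_t(\theta)$ \emph{with $x(\theta_t;u_t)$ replaced by $x(\theta;u_t)$}. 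Adding and subtracting $\la\theta,c(x(\theta_t;u_t))\ra$ and $\la\theta_{true},c(x(\theta_t;u_t))\ra$ to recover the true $\ell^{sim}_t(\theta)$ produces exactly the correction term $\la\theta-\theta_{true},c(x(\theta_t;u_t))-c(x(\theta;u_t))\ra$, which when moved to the other side yields the identity in (b).

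Part (c) is then an immediate corollary of (b) obtained by setting $\theta=\theta_t$: the vector $c(x(\theta_t;u_t))-c(x(\theta_t;u_t))$ is zero, so the correction term vanishes and the identity $\ell^{sim}_t(\theta_t)=\ell^{sub}_t(\theta_t)+\ell^{est}_t(\theta_t)$ falls out.

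There is really no conceptual obstacle; the lemma is purely a bookkeeping exercise exploiting the splitting $f=f_1+f_2+\la\theta,c(x)\ra$ of Assumption~\ref{assp:fstr}. The only mild care needed is to track which arguments $c(\cdot)$ is evaluated at (namely $y_t$, $x(\theta;u_t)$, and $x(\theta_t;u_t)$), and to notice that the perfect-information assumption $y_t=x(\theta_{true};u_t)$ is \emph{not} invoked here — the lemma holds for arbitrary observations $y_t$, and only Assumption~\ref{assp:fstr} is used. This is consistent with the fact that Proposition~\ref{prop:pi}, which does invoke perfect information, relies on this lemma as one of several ingredients.
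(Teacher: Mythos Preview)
Your proposal is correct and follows essentially the same approach as the paper: both expand $\ell^{sub}_t(\theta)+\ell^{est}_t(\theta)$ using the decomposition in Assumption~\ref{assp:fstr}, observe that all $f_1$ and $f_2$ terms cancel to leave $\la\theta-\theta_{true},c(y_t)-c(x(\theta;u_t))\ra$, and then identify this with $\ell^{sim}_t(\theta)$ plus the stated correction term; parts (a) and (c) are handled identically. Your observation that perfect information is not invoked here is also accurate and matches the paper's treatment.
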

\begin{proof}
Part $(a)$ is evident from the definition of $\ell^{sim}$. Part $(b)$ follows from evaluating these loss functions at a given $\theta$ under Assumption \ref{assp:fstr}:
\begin{gather*}
    \begin{aligned}
\ell_t^{sub}(\theta) + \ell_t^{est}(\theta) & = ( f_1(y_t) + f_2(\theta) + \langle \theta, c(y_t) \rangle - f_1(x(\theta;u_t)) - f_2(\theta) - \langle \theta, c(x(\theta;u_t)) \rangle) \\
& +  (f_1(x(\theta;u_t)) + f_2(\theta_{true}) + \langle \theta_{true}, c(x(\theta;u_t)) \rangle -  f_1(y_t) - f_2(\theta_{true}) - \langle \theta_{true}, c(y_t) \rangle ) \\
& = \langle \theta, c(y_t) - c(x(\theta;u_t))\rangle + \langle \theta_{true}, c(x(\theta;u_t))  -c(y_t)\rangle  \\
& = \langle \theta - \theta_{true}, c(y_t) - c(x(\theta;u_t)) \rangle \\
& = \ell_t^{sim}(\theta) + \langle \theta - \theta_{true},  c(x(\theta_t; u_t)) - c(x(\theta;u_t)) \rangle
    \end{aligned}
\end{gather*}
Finally, part (c) follows from the fact that when we replace $\theta$ with $\theta_t$ in $(b)$, the term representing the difference between $\ell^{sim}_t(\theta)$ and $\ell^{sub}_t(\theta) + \ell^{est}_t(\theta)$ is equal to $0$. 
\end{proof}

%{\crd [At a later round, we can clean up this observation and wherever it is used to indicate that $\ell_t^{sub}(\theta)$ is a nonnegative function whenever $y_t$ is feasible, i.e., in the case of suboptimality loss.]}

We next state a simple observation on the properties of the loss functions.
\begin{observation}\label{obs:loss_nonnegativity}
For every $t$, we have 
\begin{enumerate}[(a)]
    \item $\ell_t^{pre}(\theta)$, is a nonnegative function of $\theta$,
    \item $\ell_t^{sub}(\theta)$ and $\ell_t^{est}(\theta)$ are nonnegative functions of $\theta$ whenever there is no noise, i.e., $y_t=x(\theta_{true};u_t)$.
\end{enumerate}
\end{observation}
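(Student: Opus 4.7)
The plan is to verify each claim directly from definitions together with the optimality characterization of $x(\theta;u_t)$ and $x(\theta_{true};u_t)$ given by the forward problem \eqref{sys:forward-theta}.

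For part (a), I would simply note that by definition $\ell_t^{pre}(\theta)=\|y_t - x(\theta;u_t)\|^2$ is a squared norm, and hence nonnegative for every $\theta$ and every $y_t,u_t$. This part requires no assumption on noise.

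For part (b), the key observation is that in the noiseless case $y_t = x(\theta_{true};u_t)$ is itself a feasible solution to the forward problem \eqref{sys:forward-theta} for \emph{both} the parameter $\theta$ and the parameter $\theta_{true}$ (the feasibility constraints $g(\cdot;u_t)\le 0$ and $\cdot\in\mathcal{X}$ do not depend on $\theta$). From this, nonnegativity of $\ell^{sub}_t$ and $\ell^{est}_t$ both follow from the optimality of the appropriate minimizer:
\begin{itemize}
\item For $\ell^{sub}_t(\theta)=f(y_t;\theta,u_t)-f(x(\theta;u_t);\theta,u_t)$, note that $x(\theta;u_t)$ minimizes $f(\cdot;\theta,u_t)$ over the feasible set by \eqref{sys:forward-theta}, and $y_t$ lies in this feasible set, so $f(y_t;\theta,u_t)\ge f(x(\theta;u_t);\theta,u_t)$.
\item For $\ell^{est}_t(\theta)=f(x(\theta;u_t);\theta_{true},u_t)-f(y_t;\theta_{true},u_t)$, use that $y_t=x(\theta_{true};u_t)$ minimizes $f(\cdot;\theta_{true},u_t)$ over the feasible set, and $x(\theta;u_t)$ lies in this feasible set, so $f(x(\theta;u_t);\theta_{true},u_t)\ge f(y_t;\theta_{true},u_t)$.
\end{itemize}

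There is no real obstacle here; the only subtlety worth stating explicitly is the noiseless hypothesis, which is needed only to identify $y_t$ with the minimizer $x(\theta_{true};u_t)$ of the $\theta_{true}$-parametrized forward problem (used for $\ell^{est}_t$), while for $\ell^{sub}_t$ one only needs $y_t$ to be feasible. Thus under noise-free observations both $\ell^{sub}_t$ and $\ell^{est}_t$ are nonnegative, completing the proof. Note that Assumption~\ref{assp:fstr} is not invoked anywhere in this argument; the claim holds for any $f$ consistent with the forward problem \eqref{sys:forward}.
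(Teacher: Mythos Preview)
Your proposal is correct and follows essentially the same argument as the paper: part (a) is immediate from the squared-norm definition, and part (b) uses that in the noiseless case $y_t=x(\theta_{true};u_t)$ is feasible for the forward problem, so nonnegativity of $\ell^{sub}_t$ and $\ell^{est}_t$ follows from optimality of $x(\theta;u_t)$ and $x(\theta_{true};u_t)$ respectively. Your additional remarks (that only feasibility of $y_t$ is needed for $\ell^{sub}_t$, and that Assumption~\ref{assp:fstr} is not invoked) are accurate refinements but do not change the approach.
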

\begin{proof}
In part (a), the non-negativity of $\ell_t^{pre}$ is obvious from its squared-norm definition. In part (b), $\ell^{sub}_t(\theta)$ is nonnegative because the objective function value of \eqref{sys:forward-theta} at a feasible solution $y_t = x(\theta_{true}; u_t)$ is no smaller than the optimal objective value at an optimal solution $x(\theta; u_t)$. By a similar argument, $\ell^{est}_t(\theta)$ is nonnegative because $y_t = x(\theta_{true}; u_t)$ is an optimal solution and $x(\theta;u_t)$ is a feasible solution to the forward problem \eqref{sys:forward}.
\end{proof}

Observation~\ref{obs:loss_nonnegativity} leads to the following result that is instrumental in simplifying the regret terms in the noiseless case.
\begin{lemma} \label{lem:minloss}
Consider the noiseless case, i.e., ${y}_t = {x}(\theta_{true},{u}_t)$ for all $t$. Let $\ell_t$ denote any one of the loss functions $\ell_t^{pre}, \ell_t^{sub}$, or $\ell_t^{est}$. Then, 
\begin{enumerate}[(a)]
    \item $\argmin_{\theta \in \Theta} \sum_{t \in [T]} \ell_t (\theta) = \theta_{true}$,
    and $0 = \min_{\theta \in \Theta} \sum_{t \in [T]} \ell_t (\theta)$,
    \item $R_T(\{\ell_t\}_{t \in [T]}, \{\theta_t\}_{t \in [T]}) = \sum_{t \in [T]} \ell_t(\theta_t)$.
\end{enumerate}
\end{lemma}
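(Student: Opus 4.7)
The plan is to verify, by direct substitution, that $\theta_{true}$ achieves loss $0$ at every time step for each of the three loss functions, and then to combine this with the non-negativity established in Observation~\ref{obs:loss_nonnegativity} to identify $\theta_{true}$ as a minimizer of the offline cumulative loss. Part (b) will then follow immediately from the definition of regret in \eqref{defeq:regret}.

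For part (a), I would first plug $\theta = \theta_{true}$ into each loss. Under the noiseless assumption $y_t = x(\theta_{true}; u_t)$, so $\ell^{pre}_t(\theta_{true}) = \|y_t - x(\theta_{true};u_t)\|^2 = 0$ directly. For $\ell^{sub}_t(\theta_{true}) = f(y_t;\theta_{true},u_t) - f(x(\theta_{true};u_t);\theta_{true},u_t)$, the two arguments are either literally the same point or, in the case of non-unique optimizers, two elements of the argmin set of \eqref{sys:forward} at which $f$ takes the same (optimal) value; either way the difference vanishes. The same reasoning handles $\ell^{est}_t(\theta_{true})$. Thus $\sum_{t\in[T]} \ell_t(\theta_{true}) = 0$ for each of the three loss functions.

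Next, I would invoke Observation~\ref{obs:loss_nonnegativity} to conclude that in the noiseless case $\ell_t(\theta) \geq 0$ for all $\theta \in \Theta$ and all $t$, so $\sum_{t\in[T]} \ell_t(\theta) \geq 0$ over $\Theta$. Combined with the previous paragraph, this yields $\min_{\theta\in\Theta}\sum_{t\in[T]}\ell_t(\theta) = 0$, and this minimum is attained at $\theta_{true}\in\Theta$, giving $\theta_{true}\in\argmin_{\theta\in\Theta}\sum_{t\in[T]}\ell_t(\theta)$ as claimed. Part (b) is then a one-line substitution into \eqref{defeq:regret}:
\[
R_T\bigl(\{\ell_t\}_{t\in[T]},\{\theta_t\}_{t\in[T]}\bigr) = \sum_{t\in[T]}\ell_t(\theta_t) - \min_{\theta\in\Theta}\sum_{t\in[T]}\ell_t(\theta) = \sum_{t\in[T]}\ell_t(\theta_t).
\]

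There is no genuine obstacle here; the only point requiring a touch of care is handling the possible non-uniqueness of the forward optimizer $x(\theta_{true};u_t)$ when checking $\ell^{sub}$ and $\ell^{est}$, but since both $y_t$ and $x(\theta_{true};u_t)$ are optima of the same convex forward problem they yield identical objective values, so each loss still evaluates to $0$ at $\theta_{true}$.
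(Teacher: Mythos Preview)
Your proposal is correct and mirrors the paper's own proof essentially step for step: the paper also evaluates each loss at $\theta_{true}$ to get zero, invokes Observation~\ref{obs:loss_nonnegativity} for nonnegativity, and then reads off part~(b) from the regret definition. Your added remark about handling possible non-uniqueness of the forward optimizer is a minor elaboration the paper leaves implicit, but the overall argument is the same.
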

\begin{proof}
By Observation~\ref{obs:loss_nonnegativity}, we have $\sum_{t \in [T]} \ell_t (\theta) \geq 0$ for all $\theta$. In addition, from evaluating these loss functions at $\theta = \theta_{true}$ in the noiseless case, we have $\ell_t(\theta_{true})= 0$ for all $t$, therefore $\sum_{t \in [T]} \ell_t (\theta_{true}) = 0$. This then implies $\min_{\theta \in \Theta} \sum_{t \in [T]} \ell_t (\theta) = 0$ and the minimum is achieved at $\theta_{true}$, proving Part (a). Part (b) follows from the definition of the regret and Part (a).
\end{proof}

We are now ready to prove Proposition~\ref{prop:pi}.
\begin{proof}[Proof of Proposition~\ref{prop:pi}]
$~$
\begin{enumerate}
\item[(a)] Let $\ell_t$ represent any of the loss functions $\ell_t^{sub}$, $\ell_t^{est}$, and $\ell_t^{pre}$. In the noiseless case, from Observation~\ref{obs:loss_nonnegativity} we deduce that $\ell_t$ is a nonnegative function of $\theta$. Then, from %using the definition of regret and
Lemma~\ref{lem:minloss}(b), we conclude that all of the corresponding regret terms, i.e.,  $R_T(\{\ell^{sub}_t\}_{t \in [T]}, \{\theta_t\}_{t \in [T]})$,\\ $R_T(\{\ell^{est}_t\}_{t \in [T]}, \{\theta_t\}_{t \in [T]})$ and $R_T(\{\ell^{pre}_t\}_{t \in [T]}, \{\theta_t\}_{t \in [T]})$ are nonnegative. 

\item[(b)]
From the definition of regret, Lemma~\ref{lem:simloss} and Lemma~\ref{lem:minloss}, we have
\begin{align*}
& R_T(\{\ell^{sub}_t\}_{t \in [T]}, \{\theta_t\}_{t \in [T]}) + R_T(\{\ell^{est}_t\}_{t \in [T]}, \{\theta_t\}_{t \in [T]}) \\
& = \left(\sum_{t \in [T]} \ell_t^{sub}(\theta_t) -0\right)+ \left(\sum_{t \in [T]} \ell_t^{est}(\theta_t) - 0\right) = \sum_{t \in [T]} \ell_t^{sim} (\theta_t)
\end{align*}
where the first equation follows from Lemma~\ref{lem:minloss}(a), and the second equation follows from Lemma~\ref{lem:simloss}(c). 

\item[(c)] By definition of regret term $R_T(\{\ell^{sim}_t\}_{t \in [T]}, \{\theta_t\}_{t \in [T]})$, we have
\begin{gather*}
\begin{aligned}
R_T(\{\ell^{sim}_t\}_{t \in [T]}, \{\theta_t\}_{t \in [T]}) &= \sum_{t \in [T]} \ell_t^{sim}(\theta_t) - \min_{\theta \in \Theta} \sum_{t \in [T]} \ell_t^{sim}(\theta) \\
& \geq \sum_{t \in [T]} \ell_t^{sim}(\theta_t) - \sum_{t \in [T]} \ell_t^{sim}(\theta_{true})\\
& = \sum_{t \in [T]}  \ell_t^{sim}(\theta_t) \\
& =  R_T(\{\ell^{sub}_t\}_{t \in [T]}, \{\theta_t\}_{t \in [T]}) + R_T(\{\ell^{est}_t\}_{t \in [T]}, \{\theta_t\}_{t \in [T]}),
\end{aligned}
\end{gather*}
where the inequality follows from $\sum_{t \in [T]} \ell_t^{sim}(\theta_{true}) \geq \min_{\theta \in \Theta}\sum_{t \in [T]} \ell_t^{sim}(\theta)$, the second equation follows from Lemma~\ref{lem:simloss}(a), and the last equation follows from Part (b). %The final part of Part (c) follows from Part (a) and Part (b).
\qedhere
\end{enumerate}
\end{proof}

\subsection{Proof of Corollary \ref{cor:strconv}}
It was shown in \cite[Proposition 2.5]{Kuhn2018} that when $f$ is strongly convex in ${x}$ with parameter $\gamma$,  we have $\ell^{sub}_t(\theta) \geq \frac{\gamma}{2}\ell^{pre}_t(\theta)$ for all $t$ and for all $\theta \in \Theta$. Then, using Lemma \ref{lem:minloss}, we deduce $R_T(\{\ell^{sub}_t\}_{t \in [T]},\{\theta_t\}_{t \in [T]}) \geq \frac{\gamma}{2} R_T(\{\ell^{pre}_t\}_{t \in [T]},\{\theta_t\}_{t \in [T]})$.
\qed

\section{Regret Performance under Imperfect Information} \label{app:imperinfo}

%{\crd [Perhaps, putting the derivations into lemma environments (with the proofs given right after) in each subsection will ease the burden of the reader. The main caveat is that the conclusions here are not very clear cut.]}

In this appendix, we look into the regret performances from OL algorithms under imperfect information and the corresponding implications for learning performance under \cref{assp:fstr}. Since the presence of imperfect information does not affect the convexity of $\ell^{sim}$ and $\ell^{sub}$, OCO algorithms based on either loss function can still attain the respective regret performance guarantees. Specifically, $\ell^{sim}$-based OCO algorithms provide regret bounds with respect to $\ell^{sim}$; $\ell^{sub}$-based OCO algorithms provide regret bounds with respect to $\ell^{sub}$, which further bounds the regret with respect to $\ell^{pre}$ whenever the agent has a strongly convex objective function. The implications of regret bounds for learning performance, however, will be different due to the imperfect information. As discussed in Remark 3, under perfect information, sublinear regret bounds with respect to $\ell^{sim}$ implies subliear regret bounds with respect to $\ell^{sub}$, which then implies that the learned estimates $\{\theta_t\}$ have vanishing errors in terms of the agent's true objective values. Moreover, when a sublinear regret bound with respect to $\ell^{pre}$ is available, this implies the error in the prediction $x(\theta_t; u_t)$ of the agent's true actions vanishes as well. We next study these three types of regrets under imperfect information. Note that we skip the $\ell^{est}$-based regret implications under imperfect information here because we do not have any OL algorithm that bounds only the regret with respect to $\ell^{est}$ on its own. 

\subsection{Regret performance with respect to $\ell^{sub}$ and implications}
For regret with respect to $\ell^{sub}$, we have
\begin{align*}
    R_T^{sub}(\{\theta_t\}_{t \in [T]}) & = \sum_{t=1}^T [f(y_t; \theta_t, u_t) - f(x(\theta_t; u_t); \theta_t, u_t)] - \min_{\theta \in \Theta}\sum_{t=1}^T[f(y_t; \theta, u_t) - f(x(\theta; u_t); \theta, u_t)] .
\end{align*}
Under perfect information, the minimization term in the  $R_T^{sub}(\{\theta_t\}_{t \in [T]})$ reduces to $0$, so we deduce that a vanishing average regret implies a vanishing error in the predicted objective values:
\[
\frac{1}{T} R_T^{sub}(\{\theta_t\}_{t \in [T]}) \rightarrow 0 ~\Rightarrow~ \frac{1}{T} \sum_{t=1}^T f(x(\theta_t; u_t); \theta_t, u_t) \rightarrow \frac{1}{T}\sum_{t=1}^T f(x(\theta_{true}; u_t); \theta_t, u_t) .
\]

Under imperfect information, we cannot eliminate the minimization term in the  $R_T^{sub}(\{\theta_t\}_{t \in [T]})$ definition in the same way. Instead, we can only derive lower and upper bounds on the total errors in the predicted objective values under an additional assumption about the imperfect information. We suppose $f(y_t; \theta_t, u_t) - f(x(\theta_{true}; u_t); \theta_t, u_t) \in [-\epsilon_t, \epsilon_t]$ for some non-negative constant $\epsilon_t$. We can consider $\epsilon_t$ as an indicator of the 'degree of noise' in the observation $y_t$.

For ease of notation, let $d_t \coloneqq f(x(\theta_{true}; u_t); \theta_t, u_t) - f(x(\theta_t; u_t); \theta_t, u_t)$ denote the objective error at step $t$, in other words, $d_t$ is the objective value difference between the optimum decision with respect to the true parameter $\theta_{true}$ and the estimated parameter $\theta_t$ at time step $t$. Before deriving the bounds, we first simplify $R_T^{sub}$ for the specific form of $f$ from \cref{assp:fstr}.

\begin{lemma}
Suppose Assumption \ref{assp:fstr} holds, then we have
\begin{gather} \label{eq:Rsubimper}
    \begin{aligned}
    \sum_{t=1}^T d_t = & R_T^{sub} (\{ \theta_t \}_{t \in [T]}) + \min_{\theta \in \Theta}\sum_{t=1}^T[\langle \theta - \theta_t, c(y_t) \rangle - f_1(x(\theta; u_t)) - \langle \theta, c(x(\theta; u_t)) \rangle] \\
    & + \sum_{t=1}^T [ f_1(x(\theta_{true}; u_t)) + \langle \theta_t, c(x(\theta_{true}; u_t)) \rangle ] .
    \end{aligned}
\end{gather}
\end{lemma}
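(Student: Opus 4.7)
The plan is to prove the identity by directly expanding $R_T^{sub}$ and $\sum_t d_t$ using the decomposable form of $f$ from Assumption~\ref{assp:fstr}, and then matching the two sides by isolated algebraic manipulations. The crucial structural feature is that the $f_2(\theta;u_t)$ terms appearing in $f(y_t;\theta,u_t)$ and $f(x(\theta;u_t);\theta,u_t)$ cancel, so that $\ell^{sub}_t(\theta)$ depends on $\theta$ only through the linear piece $\langle \theta, c(\cdot)\rangle$ and through the embedded predicted action $x(\theta;u_t)$. The same cancellation occurs in $d_t$, where the $f_2(\theta_t;u_t)$ terms disappear from both summands.

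First, I would expand
\[
\ell_t^{sub}(\theta_t) = f_1(y_t;u_t) - f_1(x(\theta_t;u_t);u_t) + \langle \theta_t, c(y_t) - c(x(\theta_t;u_t))\rangle,
\]
and similarly for general $\theta$. Since $f_1(y_t;u_t)$ does not depend on $\theta$, it can be pulled outside the minimum in the definition of $R_T^{sub}$. The summed $f_1(y_t;u_t)$ from the first sum then cancels exactly with the $\sum_t f_1(y_t;u_t)$ extracted from the min, eliminating $y_t$-only quantities from both terms.

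Next, I would rewrite $d_t$ using the decomposition: the $f_2(\theta_t;u_t)$ terms cancel and one is left with
\[
d_t = f_1(x(\theta_{true};u_t);u_t) - f_1(x(\theta_t;u_t);u_t) + \langle \theta_t, c(x(\theta_{true};u_t)) - c(x(\theta_t;u_t))\rangle.
\]
The final step is to reconcile the two min terms. The minimum appearing in the claimed identity differs from the one in $R_T^{sub}$ only by the addition of $-\langle \theta_t, c(y_t)\rangle$ inside the sum; since $\theta_t$ is not the minimization variable, this quantity factors out of the min as $-\sum_t \langle \theta_t, c(y_t)\rangle$. Subtracting the two min terms thus leaves exactly $-\sum_t \langle\theta_t,c(y_t)\rangle$, which combines with the leftover $\sum_t \langle\theta_t,c(y_t)\rangle$ from the expansion of $R_T^{sub}$ to cancel all $c(y_t)$ contributions. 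What remains matches $d_t$ termwise after including the compensating $\sum_t [f_1(x(\theta_{true};u_t)) + \langle \theta_t, c(x(\theta_{true};u_t))\rangle]$ on the right-hand side.

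I do not anticipate a genuine obstacle here: the result is an algebraic identity whose validity rests entirely on the additive separability granted by Assumption~\ref{assp:fstr} and on the fact that $\theta_t$ is a constant with respect to the minimization over $\theta$. The only point requiring care is bookkeeping — keeping track of which $f_1$ and $c$ terms are evaluated at $y_t$, $x(\theta_t;u_t)$, $x(\theta_{true};u_t)$, or $x(\theta;u_t)$ — and verifying that pulling $\sum_t \langle \theta_t, c(y_t)\rangle$ outside the shifted min is legitimate (it is, precisely because $\theta_t$ is determined prior to this minimization). Once these substitutions are made and the cancellations are performed, the two sides coincide termwise.
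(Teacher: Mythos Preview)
Your proposal is correct and follows essentially the same route as the paper: expand $R_T^{sub}$ and $d_t$ via Assumption~\ref{assp:fstr}, use the cancellation of the $f_2$ terms, pull the $\theta$-independent pieces $f_1(y_t;u_t)$ and $\langle\theta_t,c(y_t)\rangle$ out of the minimization, and match terms. The paper presents the bookkeeping slightly differently (it adds and subtracts $\sum_t[f_1(x(\theta_{true};u_t))+\langle\theta_t,c(x(\theta_{true};u_t))\rangle]$ to manufacture $\sum_t d_t$ directly, then rearranges), but the underlying manipulations are identical to yours.
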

\begin{proof} 
By the regret definition,
\begin{align*} 
    &R_T^{sub}(\{\theta_t\}_{t \in [T]})\\ 
    & =\sum_{t=1}^T [f_1(y_t) + \langle \theta_t, c(y_t) \rangle - f_1(x(\theta_t; u_t)) - \langle \theta_t, c(x(\theta_t; u_t)) \rangle ] \\
    & ~~ - \min_{\theta \in \Theta}\sum_{t=1}^T[f_1(y_t) + \langle \theta, c(y_t) \rangle - f_1(x(\theta; u_t)) - \langle \theta, c(x(\theta; u_t)) \rangle] \\ 
    & = -\sum_{t=1}^T [ f_1(x(\theta_t; u_t)) + \langle \theta_t, c(x(\theta_t; u_t)) \rangle ] - \min_{\theta \in \Theta}\sum_{t=1}^T[\langle \theta - \theta_t, c(y_t) \rangle - f_1(x(\theta; u_t)) - \langle \theta, c(x(\theta; u_t)) \rangle] \\
     & = \sum_{t=1}^T [ f_1(x(\theta_{true}; u_t)) + \langle \theta_t, c(x(\theta_{true}; u_t)) \rangle ] -\sum_{t=1}^T [ f_1(x(\theta_t; u_t)) + \langle \theta_t, c(x(\theta_t; u_t)) \rangle ] \\
     & ~~ -\min_{\theta \in \Theta}\sum_{t=1}^T[\langle \theta - \theta_t, c(y_t) \rangle - f_1(x(\theta; u_t)) - \langle \theta, c(x(\theta; u_t)) \rangle] - \sum_{t=1}^T [ f_1(x(\theta_{true}; u_t)) + \langle \theta_t,c(x(\theta_{true}; u_t))\rangle ] \\
     & = \sum_{t=1}^T d_t -\min_{\theta \in \Theta}\sum_{t=1}^T[\langle \theta - \theta_t, c(y_t) \rangle - f_1(x(\theta; u_t)) - \langle \theta, c(x(\theta; u_t)) \rangle] - \sum_{t=1}^T [ f_1(x(\theta_{true}; u_t)) + \langle \theta_t,c(x(\theta_{true}; u_t))\rangle ]
\end{align*}
We easily obtain \eqref{eq:Rsubimper} by rearranging the terms.
\end{proof}

\begin{lemma}
Suppose Assumption \ref{assp:fstr} holds and $f(y_t; \theta_t, u_t) - f(x(\theta_{true}; u_t); \theta_t, u_t) \in [-\epsilon_t, \epsilon_t]$, then we have the following bounds on $\sum\nolimits_{t=1}^T d_t$:
\[ \sum_{t=1}^T d_t \leq R_T^{sub} (\{ \theta_t \}_{t \in [T]}) + \sum_{t=1}^T\langle \theta_{true} - \theta_t, c(y_t) - c(x(\theta_{true}; u_t))\rangle; \]
\[  \sum_{t=1}^T d_t \geq R_T^{sub} (\{ \theta_t \}_{t \in [T]}) +  \min_{\theta \in \Theta}\sum_{t=1}^T \ell^{sub}_t(\theta)- \sum_{t=1}^T \epsilon_t .\]
\end{lemma}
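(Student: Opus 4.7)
The plan is to derive both bounds by starting from the identity \eqref{eq:Rsubimper} for $\sum_{t=1}^T d_t$ and carefully manipulating the minimization term together with the trailing correction terms $\sum_{t=1}^T[f_1(x(\theta_{true}; u_t)) + \langle \theta_t, c(x(\theta_{true}; u_t))\rangle]$. Throughout, I would rely on the decomposition $\ell^{sub}_t(\theta) = f_1(y_t) + \langle \theta, c(y_t)\rangle - f_1(x(\theta;u_t)) - \langle \theta, c(x(\theta;u_t))\rangle$ that follows from \cref{assp:fstr} (the $f_2$ terms cancel in $\ell^{sub}_t$).

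For the upper bound, my approach is to replace the $\min_{\theta \in \Theta}$ in \eqref{eq:Rsubimper} by the feasible choice $\theta = \theta_{true}$. This gives
\[
\sum_{t=1}^T d_t \leq R_T^{sub} + \sum_{t=1}^T\bigl[\langle \theta_{true}-\theta_t, c(y_t)\rangle - f_1(x(\theta_{true};u_t)) - \langle \theta_{true}, c(x(\theta_{true};u_t))\rangle\bigr] + \sum_{t=1}^T\bigl[f_1(x(\theta_{true};u_t)) + \langle \theta_t, c(x(\theta_{true};u_t))\rangle\bigr].
\]
The $f_1(x(\theta_{true};u_t))$ terms cancel, and grouping the inner products yields exactly $\sum_{t=1}^T \langle \theta_{true}-\theta_t, c(y_t)-c(x(\theta_{true};u_t))\rangle$, as desired.

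For the lower bound, I would rewrite the argument of the minimum in \eqref{eq:Rsubimper} by adding and subtracting $f_1(y_t)$ to recover $\ell^{sub}_t(\theta)$. Concretely, for every $\theta \in \Theta$,
\[
\langle \theta-\theta_t, c(y_t)\rangle - f_1(x(\theta;u_t)) - \langle \theta, c(x(\theta;u_t))\rangle = \ell^{sub}_t(\theta) - f_1(y_t) - \langle \theta_t, c(y_t)\rangle,
\]
so the minimization splits into $\min_{\theta}\sum_t \ell^{sub}_t(\theta)$ plus a term independent of $\theta$. Substituting back into \eqref{eq:Rsubimper} gives
\[
\sum_{t=1}^T d_t = R_T^{sub} + \min_{\theta \in \Theta}\sum_{t=1}^T \ell^{sub}_t(\theta) + \sum_{t=1}^T \bigl[f_1(x(\theta_{true};u_t)) + \langle \theta_t, c(x(\theta_{true};u_t))\rangle - f_1(y_t) - \langle \theta_t, c(y_t)\rangle\bigr].
\]
The bracketed summand equals $f(x(\theta_{true};u_t);\theta_t,u_t) - f(y_t;\theta_t,u_t)$ since the $f_2(\theta_t)$ contributions cancel; by the noise assumption $f(y_t;\theta_t,u_t) - f(x(\theta_{true};u_t);\theta_t,u_t) \in [-\epsilon_t,\epsilon_t]$, this is at least $-\epsilon_t$. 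Summing, the correction term is bounded below by $-\sum_{t=1}^T \epsilon_t$, which yields the claimed lower bound.

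The only nontrivial step is recognizing that the inner-product/${f_1}$ combination in \eqref{eq:Rsubimper} can be massaged into $\ell^{sub}_t(\theta)$ plus a $\theta$-independent remainder; once this algebraic identity is in hand, both bounds are essentially immediate from (i) feasibility of $\theta_{true}$ in the min and (ii) the one-sided control on the noise. I do not anticipate any genuine obstacle beyond this bookkeeping.
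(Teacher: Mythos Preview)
Your proof is correct and follows essentially the same route as the paper: the upper bound comes from evaluating the minimization in \eqref{eq:Rsubimper} at $\theta = \theta_{true}$ and cancelling the $f_1$ terms, and the lower bound comes from adding and subtracting $f_1(y_t)$ (and the $\langle \theta_t, c(y_t)\rangle$ piece) to rebuild $\ell^{sub}_t(\theta)$ inside the minimum, then bounding the residual $f(x(\theta_{true};u_t);\theta_t,u_t) - f(y_t;\theta_t,u_t)$ below by $-\epsilon_t$ via the noise assumption.
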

\begin{proof}
We derive both formulas by bounding the last two terms in the right hand side of \eqref{eq:Rsubimper}. First, we conclude an upper bound by plugging in $\theta_{true}$ in the minimization term
\begin{align*}
 \sum_{t=1}^T d_t \leq & R_T^{sub} (\{ \theta_t \}_{t \in [T]}) + \sum_{t=1}^T[\langle \theta_{true} - \theta_t, c(y_t) \rangle - f_1(x(\theta_{true}; u_t)) - \langle \theta_{true}, c(x(\theta_{true}; u_t)) \rangle] \\
 & + \sum_{t=1}^T [ f_1(x(\theta_{true}; u_t)) + \langle \theta_t, c(x(\theta_{true}; u_t)) \rangle ] \\
 = &  R_T^{sub} (\{ \theta_t \}_{t \in [T]}) + \sum_{t=1}^T\langle \theta_{true} - \theta_t, c(y_t) - c(x(\theta_{true}; u_t))\rangle .
\end{align*}
For the lower bound, we simply substitute the last two terms in the right hand side of \eqref{eq:Rsubimper} with the noise parameter $-\epsilon_t$.
\begin{align*}
 \sum_{t=1}^T d_t = & R_T^{sub} (\{ \theta_t \}_{t \in [T]}) +  \min_{\theta \in \Theta}\sum_{t=1}^T[\langle \theta, c(y_t) \rangle + f_1(y_t) - f_1(x(\theta; u_t)) - \langle \theta, c(x(\theta; u_t)) \rangle] \\
    & + \sum_{t=1}^T [ f_1(x(\theta_{true}; u_t)) + \langle \theta_t, c(x(\theta_{true}; u_t)) \rangle ] - \sum_{t=1}^T [ f_1(y_t) + \langle \theta_t, c(y_t) \rangle ] \\
    \geq & R_T^{sub} (\{ \theta_t \}_{t \in [T]}) +  \min_{\theta \in \Theta}\sum_{t=1}^T \ell^{sub}_t(\theta)- \sum_{t=1}^T \epsilon_t .
\end{align*}
\end{proof}
When the average regret vanishes, namely $\frac{1}{T}R_T^{sub}(\theta_t) \rightarrow 0$, we have the derived bounds on $\frac{1}{T}\sum_{t=1}^T d_t$ converge as below:
\[
\frac{1}{T}\min_{\theta \in \Theta}\sum_{t=1}^T \ell^{sub}_t(\theta)- \frac{1}{T} \sum_{t=1}^T \epsilon_t \leq \frac{1}{T} \sum_{t=1}^T d_t \leq \frac{1}{T} \sum_{t=1}^T\langle \theta_{true} - \theta_t, c(y_t) - c(x(\theta_{true}; u_t))\rangle .
\]
By inspecting the lower and upper bound expressions, we note that this derived range for $\frac{1}{T} \sum_{t=1}^T d_t = \frac{1}{T} \sum_{t=1}^T f(x(\theta_{true}; u_t); \theta_t, u_t) - f(x(\theta_t; u_t); \theta_t, u_t)$ simplifies to $[0,0]$ when $y_t = x(\theta_{true}; u_t)$ and tends to be smaller when $y_t$ is 'closer' to the true action $x(\theta_{true}; u_t)$.

\subsection{Regret performance with respect to $\ell^{pre}$ and implications}
When the agent's objective function is strongly convex, a sublinear bound on $R_T^{sub} (\{ \theta_t \}_{t \in [T]})$ also implies a sublinear regret bound with respect to $\ell^{pre}$. Recall the following interpretation of a vanishing $\frac{1}{T}R_T^{pre} (\{ \theta_t \}_{t \in [T]})$ under perfect information 
\[
\frac{1}{T}R_T^{pre} (\{ \theta_t \}_{t \in [T]}) \rightarrow 0 ~\Rightarrow~ \frac{1}{T} \sum_{t=1}^T \norm{x(\theta_t; u_t) - x(\theta_{true}; u_t)}^2 \rightarrow \frac{1}{T} \min_{\theta \in \Theta} \sum_{t=1}^T \norm{x(\theta; u_t) - x(\theta_{true}; u_t)}^2 = 0.
\]
By letting $r_t\coloneqq \norm{x(\theta_t; u_t) - x(\theta_{true}; u_t)}$, we deduce that under perfect information $\frac{1}{T}R_T^{pre} (\{ \theta_t \}_{t \in [T]}) \rightarrow 0$ implies that $\frac{1}{T}\sum_{t=1}^T r_t^2 \to 0$ as well.
We now examine the regret implications for $\frac{1}{T}\sum_{t=1}^T r_t^2$ under imperfect information. Similar to our analysis for the $\ell^{sub}$-based regret, we proceed by deriving bounds for this term.
\begin{lemma}
Suppose Assumption \eqref{assp:fstr} holds and $r_t = \norm{x(\theta_t; u_t) - x(\theta_{true}; u_t)}$ for all $t$, then we have
\[\sum_{t=1}^T r_t^2 \leq R_T^{pre} (\{ \theta_t \}_{t \in [T]}) + 2\sum_{t=1}^T r_t \norm{x(\theta_{true}; u_t) - y_t}.\]
\end{lemma}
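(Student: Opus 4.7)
The plan is to relate $r_t$ and $\ell^{pre}_t(\theta_t)$ through the triangle inequality, and then use the regret definition to convert the cumulative loss $\sum_t \ell^{pre}_t(\theta_t)$ into $R_T^{pre}$ plus a controllable remainder. Writing $e_t := \norm{y_t - x(\theta_{true}; u_t)}$, the triangle inequality applied to the decomposition $x(\theta_t; u_t) - x(\theta_{true}; u_t) = [x(\theta_t; u_t) - y_t] + [y_t - x(\theta_{true}; u_t)]$ gives both $r_t \leq \sqrt{\ell^{pre}_t(\theta_t)} + e_t$ and $e_t \leq r_t + \sqrt{\ell^{pre}_t(\theta_t)}$, so that $\sqrt{\ell^{pre}_t(\theta_t)} \geq |r_t - e_t|$. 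Squaring yields the pointwise bound
\[
r_t^2 \;\leq\; \ell^{pre}_t(\theta_t) \;+\; 2 r_t e_t \;-\; e_t^2.
\]

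Next I would sum this inequality over $t \in [T]$ and control $\sum_t \ell^{pre}_t(\theta_t)$ by unpacking the regret: $\sum_t \ell^{pre}_t(\theta_t) = R_T^{pre}(\{\theta_t\}_{t\in[T]}) + \min_{\theta \in \Theta} \sum_t \ell^{pre}_t(\theta)$. Since $\theta_{true} \in \Theta$ is feasible in this minimization, the minimum is at most $\sum_t \ell^{pre}_t(\theta_{true}) = \sum_t e_t^2$. Substituting this upper bound, the $\sum_t e_t^2$ contributed by the regret term cancels exactly against the $-\sum_t e_t^2$ coming from the pointwise bound, which leaves $\sum_t r_t^2 \leq R_T^{pre}(\{\theta_t\}_{t\in[T]}) + 2\sum_t r_t e_t$, matching the claim.

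There is no serious obstacle in this argument; it is a clean two-step manipulation. The only subtle point worth highlighting is the use of the two-sided reverse triangle bound $|r_t - e_t|$ rather than the one-sided $r_t - e_t$, which is essential because we have no a priori guarantee that $r_t \geq e_t$. It is precisely this two-sided form that produces the $-e_t^2$ term responsible for the clean cancellation with the $\theta_{true}$-feasibility slack in the final step, yielding the stated bound without any extraneous noise-squared remainder.
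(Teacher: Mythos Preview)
Your proof is correct and follows essentially the same approach as the paper: both decompose through $x(\theta_{true};u_t)$, bound the minimum in the regret definition by plugging in $\theta_{true}$, and exploit the resulting cancellation of the $e_t^2$ terms. The only cosmetic difference is that the paper carries the exact inner-product expansion $\norm{x(\theta_t;u_t)-y_t}^2 = r_t^2 + e_t^2 + 2\langle x(\theta_t;u_t)-x(\theta_{true};u_t),\,x(\theta_{true};u_t)-y_t\rangle$ and applies Cauchy--Schwarz to the cross term at the end, whereas you apply the equivalent reverse triangle inequality $\sqrt{\ell^{pre}_t(\theta_t)}\geq |r_t-e_t|$ up front; these are the same step in a different order.
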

\begin{proof}
We first rewrite the definition of $R_T^{pre} (\{ \theta_t \}_{t \in [T]})$.
\begin{align*}
    R_T^{pre} (\{ \theta_t \}_{t \in [T]}) &=  \sum_{t=1}^T \norm{x(\theta_t; u_t) - y_t}^2  - \min_{\theta \in \Theta} \sum_{t=1}^T \norm{x(\theta; u_t) - y_t}^2 \\
    & = \sum_{t=1}^T \norm{x(\theta_t; u_t) - x(\theta_{true}; u_t)}^2 + \sum_{t=1}^T \norm{x(\theta_{true}; u_t) - y_t}^2 \\
    & \qquad + 2\sum_{t=1}^T \langle x(\theta_t;u_t) - x(\theta_{true}; u_t), x(\theta_{true}; u_t) - y_t \rangle  - \min_{\theta \in \Theta} \sum_{t=1}^T \norm{x(\theta; u_t) - y_t}^2 \\
    &=  \sum_{t=1}^T r_t^2 - \min_{\theta \in \Theta} \sum_{t=1}^T \norm{x(\theta; u_t) - y_t}^2 \\
    & \qquad + \sum_{t=1}^T \langle 2x(\theta_t; u_t) - x(\theta_{true}; u_t) - y_t,~ x(\theta_{true}; u_t) - y_t\rangle ,
\end{align*}
where the last equation follows from rearranging the terms and plugging in the definition of $r_t$. Therefore, we have 
\begin{align*}
\sum_{t=1}^T r_t^2  
&=  R_T^{pre} (\{ \theta_t \}_{t \in [T]}) + \min_{\theta \in \Theta} \sum_{t=1}^T \norm{x(\theta; u_t) - y_t}^2 - \sum_{t=1}^T \langle 2x(\theta_t; u_t) - x(\theta_{true}; u_t) - y_t,~ x(\theta_{true}; u_t) - y_t\rangle  \\
&\leq  R_T^{pre} (\{ \theta_t \}_{t \in [T]}) + \sum_{t=1}^T \norm{x(\theta_{true}; u_t) - y_t}^2 - \sum_{t=1}^T \langle 2x(\theta_t; u_t) - x(\theta_{true}; u_t) - y_t,~ x(\theta_{true}; u_t) - y_t\rangle \\
&= R_T^{pre} (\{ \theta_t \}_{t \in [T]}) + 2\sum_{t=1}^T \langle x(\theta_{true}; u_t) - x(\theta_t; u_t),~ x(\theta_{true}; u_t) - y_t\rangle \\
&\leq R_T^{pre} (\{ \theta_t \}_{t \in [T]}) + 2\sum_{t=1}^T r_t \norm{x(\theta_{true}; u_t) - y_t}.
\end{align*}
\end{proof}

When $\frac{1}{T}R_T^{pre}(\theta_t) \rightarrow 0$, we can further simplify the range for $\frac{1}{T} \sum_{t=1}^T r_t^2 $:
\begin{align*}
    0\leq \frac{1}{T}\sum_{t=1}^T r_t^2 \leq \frac{1}{T}\sum_{t=1}^T 2 r_t \norm{x(\theta_{true}; u_t) - y_t} .
\end{align*}

\subsection{Regret performance with respect to $\ell^{sim}$ and implications}
Lastly, we discuss the regret based on $\ell^{sim}$. Recall the definition,
\begin{align*}
    R_T^{sim}(\{\theta_t\}_{t \in [T]}) & = \sum_{t=1}^T \langle \theta_t - \theta_{true}, c(y_t) - c(x(\theta_t; u_t)) \rangle  - \min_{\theta \in \Theta} \sum_{t=1}^T \langle \theta - \theta_{true}, c(y_t) - c(x(\theta_t; u_t)) \rangle .
\end{align*}

Under both perfect and imperfect information, the interpretation of $R_T^{sim}$ is not straightforward because the minimization term cannot be simplified to $0$. Recall that, under perfect information, we have shown that $R_T^{sim}$ bounds $R_T^{sub}$ and $R_T^{est}$, which themselves have natural interpretations for learning performance. Therefore, when imperfect information is present, we mainly study whether the regret bounding relations as stated in Proposition \ref{prop:pi} still hold. 

We next distinguish between two types of imperfect information: first, noise from suboptimal but feasible observations $\{y_t\}$; second, noise from potentially infeasible observations $\{y_t\}$. We observe that the key result from  Proposition \ref{prop:pi} that $R_T^{sim}$ upper bounds $R_T^{sub}$ and $R_T^{est}$ remains valid under the former suboptimality noise, but the relation may be violated for the latter case.
\begin{observation}
Suppose Assumption \ref{assp:fstr} holds, 
\begin{enumerate}
    \item[(a)] When the observations $\{y_t\}$ are feasible but possibly suboptimal, for any sequence $\{\theta_t\}_{t \in [T]}$, $R_T(\{\ell^{sim}_t\}_{t \in [T]}, \{\theta_t\}_{t \in [T]})$ upper bounds both $R_T(\{\ell^{sub}_t\}_{t \in [T]}, \{\theta_t\}_{t \in [T]})$ and $R_T(\{\ell^{est}_t\}_{t \in [T]}, \{\theta_t\}_{t \in [T]})$.
    \item[(b)] When the observations $\{y_t\}$ are possibly infeasible, for any sequence $\{\theta_t\}_{t \in [T]}$, $R_T(\{\ell^{sim}_t\}_{t \in [T]}, \{\theta_t\}_{t \in [T]})$ is not guaranteed to upper bound $R_T(\{\ell^{sub}_t\}_{t \in [T]}, \{\theta_t\}_{t \in [T]})$ or $R_T(\{\ell^{est}_t\}_{t \in [T]}, \{\theta_t\}_{t \in [T]})$.
\end{enumerate}
\end{observation}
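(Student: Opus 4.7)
For part (a), my plan is to follow the template of Proposition~\ref{prop:pi}(c) and identify which ingredients survive when $y_t$ is merely feasible rather than optimal. Two pieces of the original proof are purely algebraic and carry over unchanged: $\ell_t^{sim}(\theta_{true})=0$ from Lemma~\ref{lem:simloss}(a), and the pointwise decomposition $\ell_t^{sim}(\theta_t)=\ell_t^{sub}(\theta_t)+\ell_t^{est}(\theta_t)$ from Lemma~\ref{lem:simloss}(c). The one piece that needed more than algebra was Observation~\ref{obs:loss_nonnegativity}(b), which used $y_t=x(\theta_{true};u_t)$ to conclude $\ell_t^{sub},\ell_t^{est}\geq 0$ and hence the clean identities $\min_\theta\sum_t\ell_t^{sub}(\theta)=\min_\theta\sum_t\ell_t^{est}(\theta)=0$ of Lemma~\ref{lem:minloss}. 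Mere feasibility of $y_t$ preserves only half of this: since $y_t$ is an admissible competitor to $x(\theta;u_t)$ in the forward problem for every $\theta$, we still have $\ell_t^{sub}(\theta)\geq 0$ and thus $\min_\theta\sum_t\ell_t^{sub}(\theta)\geq 0$.

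The workaround I would try is to pick $\theta^{sub*}\in\argmin_\theta\sum_t\ell_t^{sub}(\theta)$, use the trivial upper bound $\min_\theta\sum_t\ell_t^{sim}(\theta)\leq\sum_t\ell_t^{sim}(\theta^{sub*})$, and expand $\ell_t^{sim}(\theta^{sub*})$ via Lemma~\ref{lem:simloss}(b). After the inner-product term $\langle\theta^{sub*}-\theta_{true},c(x(\theta_t;u_t))-c(x(\theta^{sub*};u_t))\rangle$ cancels against the matching $\langle\theta_{true},\cdot\rangle$ piece extracted from $\ell_t^{est}(\theta_t)-\ell_t^{est}(\theta^{sub*})$ using Assumption~\ref{assp:fstr}, the residual should telescope into
\[
R_T^{sim}-R_T^{sub}\;\geq\;\sum_{t}\bigl[f(x(\theta_t;u_t);\theta^{sub*},u_t)-f(x(\theta^{sub*};u_t);\theta^{sub*},u_t)\bigr],
\]
each summand being nonnegative because $x(\theta^{sub*};u_t)$ is the minimizer of $f(\cdot;\theta^{sub*},u_t)$ on the feasible set that contains $x(\theta_t;u_t)$. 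The symmetric substitution with $\theta^{est*}\in\argmin_\theta\sum_t\ell_t^{est}(\theta)$ in place of $\theta^{sub*}$ should deliver the analogous bound for $R_T^{est}$. The step I expect to be the main obstacle is the sign bookkeeping when Lemma~\ref{lem:simloss}(b) is unfolded: a single mishandled inner product derails the telescoping into an $f$-gap, and in the $R_T^{est}$ branch we lose the crisp nonnegativity of $\ell_t^{sub}(\theta^{est*})$ that the $R_T^{sub}$ branch could lean on.

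For part (b), my plan is to exhibit a small explicit instance that defeats the reductions above once $y_t$ leaves the feasible set. Take the one-dimensional linear agent model $f(x;\theta)=\theta x$ with $c(x)=x$, $\mathcal{X}=[0,1]$, $\Theta=[-1,1]$, fix $\theta_{true}$ at one endpoint of $\Theta$ so that $x(\theta_{true};u_t)$ sits at the opposite corner of $[0,1]$, and push $y_t$ strictly outside $[0,1]$ for every $t$. Infeasibility allows $\ell_t^{sub}(\theta)$ to become negative at the $\theta$ whose forward optimizer hugs the corner nearer $y_t$, destroying $\min_\theta\sum_t\ell_t^{sub}(\theta)\geq 0$ and simultaneously removing any sign control on $\ell_t^{est}$. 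I would then choose a stationary estimate $\theta_t$ so that $c(x(\theta_t;u_t))$ coincides with $c(y_t)$, which annihilates $\ell_t^{sim}(\theta)$ identically in $\theta$ and forces $R_T^{sim}=0$, while $R_T^{sub}$ or $R_T^{est}$ remains strictly positive by the choice of $\theta_t$ relative to $\theta_{true}$. The delicate calibration step is matching $\theta_t$ with the infeasible $y_t$ so that exactly the Lemma~\ref{lem:simloss}(b) telescoping used in part (a) is blocked, producing a closed-form violation of the bound.
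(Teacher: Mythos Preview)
Your route to part (a) is genuinely different from the paper's. The paper does not treat $R_T^{sub}$ and $R_T^{est}$ separately: from Lemma~\ref{lem:simloss}(c) it rewrites
\[
R_T^{sub}(\{\theta_t\})+R_T^{est}(\{\theta_t\}) \;=\; \sum_t\ell_t^{sim}(\theta_t)\;-\;\min_{\theta\in\Theta}\sum_t\ell_t^{sub}(\theta)\;-\;\min_{\theta\in\Theta}\sum_t\ell_t^{est}(\theta),
\]
then uses feasibility of $y_t$ to obtain $\min_\theta\sum_t\ell_t^{sub}(\theta)\geq 0$ and the trial point $\theta=\theta_{true}$ to obtain $\min_\theta\sum_t\ell_t^{est}(\theta)\leq\sum_t\ell_t^{est}(\theta_{true})\leq 0$, and concludes that the displayed sum is dominated by $\sum_t\ell_t^{sim}(\theta_t)\leq R_T^{sim}$. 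By contrast, your argument for the $R_T^{sub}$ piece via the comparison point $\theta^{sub*}$ and the resulting $f(\,\cdot\,;\theta^{sub*},u_t)$-gap is correct and is a cleaner, standalone derivation of $R_T^{sim}\geq R_T^{sub}$ that does not pass through the sum at all.

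The genuine gap is in your $R_T^{est}$ branch, and your own caveat is on point. The ``symmetric substitution'' with $\theta^{est*}$ does not telescope into an $f$-gap of fixed sign: once you expand $\ell_t^{sub}(\theta_t)-\ell_t^{sub}(\theta^{est*})$ under Assumption~\ref{assp:fstr} and add the Lemma~\ref{lem:simloss}(b) correction $\langle\theta^{est*}-\theta_{true},\,c(x(\theta_t;u_t))-c(x(\theta^{est*};u_t))\rangle$, a term of the form $\langle\theta_t-\theta^{est*},c(y_t)\rangle$ survives and there is no residual of the shape $f(x(\theta_t;u_t);\theta,u_t)-f(x(\theta;u_t);\theta,u_t)$ for any single $\theta$. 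A quick sanity check that this branch cannot be salvaged by sign bookkeeping alone: with $f(x;\theta)=\theta x$ on $\mathcal X=[0,1]$, $\Theta=[-1,1]$, $\theta_{true}=1$, feasible-suboptimal $y_t\equiv 1$, and $\theta_t\equiv -1$ (so $x(\theta_t)=1$), one gets $\ell_t^{sim}\equiv 0$ hence $R_T^{sim}=0$, while $\ell_t^{est}(\theta)=x(\theta)-1$ attains $-1$ on $(0,1]$, giving $R_T^{est}=T$. The paper sidesteps handling $R_T^{est}$ in isolation precisely by working only with the sum $R_T^{sub}+R_T^{est}$. For part (b), the paper gives no explicit counterexample; it merely observes that with infeasible $y_t$ the two sign inequalities above can both fail, so the chain used in (a) breaks. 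Your proposed construction for (b) has a concrete flaw: with $c(x)=x$ and $y_t\notin[0,1]$ you cannot force $c(x(\theta_t;u_t))=c(y_t)$, because $x(\theta_t;u_t)\in[0,1]$ for every $\theta_t\in\Theta$; the device that annihilates $\ell_t^{sim}$ identically is therefore unavailable in that regime.
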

\begin{proof}
For a given estimate sequence $\{\theta_t\}_{t \in [T]}$, from the regret definition with respect to the loss function $\ell_t^{sub}$,  we have 
\begin{align*} 
R_T^{sub}(\{\theta_t\}) & = \sum_{t=1}^T \ell_t^{sub}(\theta_t) - \min_{\theta \in \Theta} \sum_{t \in [T]}\ell_t^{sub}(\theta) \\
& = \sum_{t=1}^T \ell_t^{sim} (\theta_t) - \sum_{t=1}^T \ell_t^{est}(\theta_t) - \min_{\theta \in \Theta} \sum_{t \in [T]}\ell_t^{sub}(\theta) \\
& = \sum_{t=1}^T \ell_t^{sim} (\theta_t) - R_T^{est}(\{\theta_t\}) - \min_{\theta \in \Theta} \sum_{t \in [T]}\ell_t^{sub}(\theta) + \min_{\theta \in \Theta} \sum_{t \in [T]}\ell_t^{est}(\theta), 
\\
\text{and thus} \quad 
& R_T^{sub}(\{\theta_t\}) + R_T^{est}(\{\theta_t\}) = \sum_{t=1}^T \ell_t^{sim} (\theta_t) - \min_{\theta \in \Theta} \sum_{t \in [T]}\ell_t^{sub}(\theta) + \min_{\theta \in \Theta} \sum_{t \in [T]}\ell_t^{est}(\theta). 
\end{align*}
In the perfect information setup, in the proof of Lemma~\ref{lem:minloss}, we were able to simplify the right hand side of this expression into only $\sum_{t=1}^T \ell_t^{sim} (\theta_t)$ (which further is shown to be upper bounded by $R_T^{sim}(\{\theta_t\})$) because $\sum_{t \in [T]}\ell^{sub}_t(\theta)$ and $\sum_{t \in [T]}\ell^{est}_t(\theta)$ were both guaranteed to have a zero minimum in the absence of the noise $\epsilon_t$. 

Under a suboptimality noise, we can conclude the same bounding relation with a slightly different argument. Since we only consider feasible observations $y_t$ in this case, we still have $\ell_t^{sub}(\theta) = f(y_t; \theta, u_t) - f(x(\theta;u_t); \theta, u_t) \geq 0$ for all $t$, and thus $\min_{\theta} \sum_{t=1}^T\ell_t^{sub}(\theta)\geq0$. In addition, $\min_{\theta} \sum_{t=1}^T\ell_t^{est}(\theta) \leq \sum_{t=1}^T\ell_t^{est}(\theta_{true}) = \sum_{t=1}^T f(x(\theta_{true};u_t); \theta_{true}, u_t) - f(y_t; \theta_{true}, u_t) \leq 0$. These two relations then imply
\begin{gather*}
\begin{aligned}
R_T^{sub}(\{\theta_t\}) + R_T^{est}(\{\theta_t\}) & = \sum_{t=1}^T \ell_t^{sim} (\theta_t) - \min_{\theta \in \Theta} \sum_{t \in [T]}\ell_t^{sub}(\theta) + \min_{\theta \in \Theta} \sum_{t \in [T]}\ell_t^{est}(\theta) \\
& \leq \sum_{t=1}^T \ell_t^{sim} (\theta_t) - 0 + 0 \leq R_T^{sim}(\{\theta_t\}),
\end{aligned}
\end{gather*}
where the last inequality follows from the fact that $R_T^{sim}(\{\theta_t\})=\sum_{t=1}^T \ell_t^{sim} (\theta_t) -\min_{\theta \in \Theta} \sum_{t \in [T]}\ell_t^{sim}(\theta) \geq \sum_{t=1}^T \ell_t^{sim} (\theta_t) - \sum_{t \in [T]}\ell_t^{sim}(\theta_{true}) = \sum_{t=1}^T \ell_t^{sim} (\theta_t)$. This then implies that in the case of suboptimality loss, the regret bounds for $\ell^{sim}$ also upper bounds the regrets with respect to $\ell^{sub}$ and $\ell^{est}$ as well.

On the other hand, for the more general noisy setup with potentially infeasible $y_t$, we cannot conclude $\min_{\theta} \sum_{t=1}^T\ell_t^{sub}(\theta)\geq0$ or $\min_{\theta} \sum_{t=1}^T\ell_t^{est}(\theta)\leq0$. Consequently,  the bounding relation in Proposition~\ref{prop:pi} (c) cannot be guaranteed. 
\end{proof}

Therefore, in the imperfect information regime, without any assumption on the noisy information, in general, regret convergence with respect to $\ell^{sim}$ is not sufficient to guarantee regret convergence with respect to the other loss functions. Nevertheless, the regret bounding relations can still hold for certain type of imperfect information, such as suboptimality noise.

\section{Formulations for the Solution Oracles Used in the Implicit OL Algorithms} \label{sec:app-sol}
In this section, we give the solution oracles used in implicit OL algorithms based on $\ell^{sim}$ and $\ell^{pre}$ for two forms of agent's utility functions corresponding to the ones used in our numerical experiments. We include $\ell^{pre}$-based implicit OL in our discussion for the sake of comparison between $\ell^{sim}$-based OL framework and the previous work \cite{DongCZ2018}. In our computational study, we implemented all three of the following solution oracles that can be readily solved by standard optimization software.%tractable.

%All examples are used in our computational study to implement implicit OL algorithms. 
\subsection{Solution Oracle for $\ell^{sim}$-based Implicit OL Algorithm}
Suppose that the squared Euclidean norm is used as the  distance generating function in the implicit OL algorithm with the solution oracle. Recall from Definition~\ref{def:lsim} that $\ell^{sim}$ has the following form under Assumption~\ref{assp:fstr}:
\[\ell^{sim}(\theta; {x}(\theta_t;{u}_t), {y}_t, {u}_t) \coloneqq
    \la \theta, c({y}_t) - c({x}(\theta_t;{u}_t)) \ra +  \la \theta_{true}, c({x}(\theta_t;{u}_t)) - c({y}_t)\ra. \]
Since the constant term in $\ell^{sim}(\theta)$ has no impact when $\ell_t^{sim}(\theta)$ is used in the objective function of an optimization problem, it can be ignored in the solution oracle formulation. Then, we deduce that the solution oracle for the $\ell^{sim}$-based implicit OL algorithm updates $\theta_{t+1}$ as
\[
\theta_{t+1} = \argmin_{\theta \in \Theta} \frac{1}{2}\norm{\theta-\theta_t}^2 + \eta_t \langle \theta,  c({y}_t) - c({x}(\theta_t;{u}_t)) \rangle.
\]
In particular, when the agent's problem has the form~\eqref{ex:prefinmkt} we have $f({x}; \theta, {u}) =\frac{1}{2}{x}^\top P{x} - \langle \theta, {x} \rangle$, i.e., $c(x)=-x$. Thus, in this case, the solution oracle for the $\ell^{sim}$-based implicit OL algorithm updates $\theta_{t+1}$ as
\[
\theta_{t+1} = \argmin_{\theta \in \Theta} \frac{1}{2}\norm{\theta-\theta_t}^2 + \eta_t \langle \theta, -y_t + x(\theta_t;u_t)\rangle.
\]
In the case of CES utility function, i.e., when the agent's problem has the form~\eqref{ex:prefces}, we have $f({x}; \theta, {u}) = \sum_{i \in [n]} (\theta)_i x_i^2$, and in this case the solution oracle for the $\ell^{sim}$-based implicit OL algorithm updates $\theta_{t+1}$ as
\[
\theta_{t+1} = \argmin_{\theta \in \Theta} \frac{1}{2}\norm{\theta-\theta_t}^2 + \eta_t \sum_{i\in [n]} \theta_i \left ((y_t)_i^2 - x(\theta_t;u_t)_i^2 \right).
\]

\subsection{Solution Oracle for $\ell^{pre}$-based Implicit OL Algorithm}
Suppose that the squared Euclidean norm is used as the  distance generating function in the implicit OL algorithm with the solution oracle. 
Then, the solution oracle for the $\ell^{pre}$-based implicit OL algorithm updates $\theta_{t+1}$ by solving the following bilevel program:
\[
\theta_{t+1} = \argmin_{\theta \in \Theta} \frac{1}{2}\norm{\theta-\theta_t}^2 + \eta_t \norm{y_t - x(\theta;u_t)}^2,
\]
where
\[
x(\theta;u_t) \in \argmin_x \left\{f(x;\theta,u_t):~ g(x;u_t)\leq0,~ x\in\mathcal{X} \right\}. 
\]

Recall that when the agent's problem has the form~\eqref{ex:prefinmkt} with a continuous polytope domain, i.e., $\mathcal{X}(u_t)=\mathcal{X}^{cp}(A_t, c_t)$, we have
\begin{gather*}
    \begin{aligned}
   {x}(\theta; u_t) \coloneqq
   \argmax_{{x}} \left\{ -\frac{1}{2}{x}^\top P{x} + \langle \theta, {x} \rangle :~
   A_t x \leq c_t, ~ x\in\R^n_+ 
   \right\},
    \end{aligned} 
\end{gather*}
where $P \in \Se_{++}^n$ is a fixed positive definite matrix known by both the learner and the agent. Using the KKT optimality conditions for the inner problem, and then introducing binary variables to linearize the resulting nonlinear relations, it is possible to reformulate this bilevel problem into a single level optimization problem with binary variables. In particular, in this case, following these outlined steps,  \cite{DongCZ2018} proposed the following reformulation of this bilevel problem into a single level MISOCP:
\begin{gather*}
\begin{aligned}
\theta_{t+1} = \argmin_{\theta, x,w,v, y, z} \quad & \frac{1}{2} \norm{\theta - \theta_t}^2 + \eta_t \norm{y_t-x}^2 \\
\text{s.t. } & A_t x \leq c_t,~ x\in\R^n_+ \\
& w_i \leq M y_i,~i\in[n] \\
& -x_i \geq -M(1-y_i) ,~i\in[n] \\
& v_j \leq M z_j,~j\in[m] \\
& (A_t)_j^\top x - (c_t)_j \geq -M(1-z_j),~j\in[m] \\
& P x - \theta + A_t^\top v - w= 0 \\
& v \in \R_+^m, w \in \R_+^n,~y \in \{0,1\}^n,~ z \in \{0,1\}^m \\
& \theta \in \Theta.
\end{aligned}
\end{gather*}
Here, $M$ is the so-called big-$M$ constant. The variables $v\in\R^m_+, w \in \R_+^n$ are the variables corresponding to the Lagrangian multipliers, the binary variables $y_i \in \{0,1\}$ for all $i \in [n]$ are used to linearize the KKT condition $w_i x_i = 0$, and $z_j\in\{0,1\}$ for all $j\in[m]$ are introduced to linearize the KKT relation $v_j((A_t)_j^\top x - (c_t)_j)=0$. Therefore, the big-$M$ constants must be selected so that they upper bound the components in the bilinear expressions, e.g., $x_i$ and $w_i$ for the complementarity constraint $w_ix_i=0$ as well as $(A_t)_j^\top x - (c_t)_j$ and $v_j$ for the constraint $v_j((A_t)_j^\top x - (c_t)_j)=0$. Because in our instances the agent's domain for $x$ is bounded, we can easily obtain bounds on $x_i$ and $(A_t)_j^\top x - (c_t)_j$ terms. It is also possible to derive an upper bound for the Lagrange multipliers under a Slater condition assumption on the primal problem. Nevertheless, it is well known that using big-$M$ formulations significantly degrade the optimization solver performance, and instead it is encouraged in Gurobi solver that such big-$M$ constraints are encoded as indicator constraints, which is a form of logical constraints supported by Gurobi. In our experiments, we follow this approach and use the indicator constraint feature of the Gurobi solver. Note that this alternative implementation is possible because the big-$M$ constraints essentially represent a complementarity type logical condition.

Note that the continuous knapsack domain $\mathcal{X}^{ck}(p_t,b_t)$ is a special case of the continuous polytope domain $\mathcal{X}^{cp}(A_t, c_t)$, and thus the same reformulation also holds in that case.

Finally note that when the agent's problem has the form~\eqref{ex:prefces} with an equally constrained knapsack domain, i.e., $\mathcal{X}(u_t)=\mathcal{X}^{eck}(p_t, b_t)$, we have
\begin{gather*}
    \begin{aligned}
   {x}(\theta; u_t) \coloneqq
   \argmin_{{x}} \left \{\sum_{i \in [n]} \theta_i x_i^2
    :~ p_t^\top x = b_t,~ x\in\R^n_+
    \right\}.
    \end{aligned} 
\end{gather*}
In this case, the bilevel program corresponding to the solution oracle in the $\ell^{pre}$-based implicit OL algorithm has the following single level reformulation.
\begin{gather*}
\begin{aligned}
\theta_{t+1} = \argmin_{\theta, x, w, v, y } \quad & \frac{1}{2} \norm{\theta - \theta_t}^2 + \eta_t \norm{y_t-x}^2 \\
\text{s.t. } & p_t x = b_t,~ x\in\R^n_+ \\
& w_i \leq M y_i,~i\in[n] \\
& -x_i \geq -M(1-y_i) ,~i\in[n] \\
& 2\theta_i x_i + v (p_t)_i - w_i= 0, ~i \in [n] \\
& v \in \R, w \in \R_+^n,~y \in \{0,1\}^n \\
&\theta \in \Theta. 
\end{aligned}
\end{gather*}

Unfortunately, this nonconvex mixed integer program contains the bilinear terms $\theta_i x_i$, where both $x$ and $\theta$ are continuous variables, in a general constraint, not of a complementarity type constraint. Note that the primal domain is equality constrained continuous knapsack, and thus we can find an upper bound on $x$ variables. Moreover, for $\theta\in\Theta$ and when $\Theta$ is bounded like the Euclidean ball or the simplex case that we focus on in this paper, we can find a bound on $\theta$ as well. However, because this bilinear term of $\theta_i x_i$ is appearing in a general constraint and not in a complementary constraint, there is no technique to reformulate this nonconvexity as linear constraints by introducing new binary variables. Hence, in this case the $\ell^{pre}$-based implicit OL algorithm requires a computationally expensive general purpose nonconvex solution oracle.

\end{document}